\def\le{\leqslant}
\def\ge{\geqslant}
\def\leq{\leqslant}
\def\geq{\geqslant}
\newtheorem{theorem}{Theorem}
\newtheorem{lemma}[theorem]{Lemma}
\newtheorem{cor}[theorem]{Corollary}
\numberwithin{equation}{section}
\numberwithin{theorem}{section}
\numberwithin{table}{section}
\numberwithin{figure}{section}
\def\\{\cr}
\def\({\left(}
\def\){\right)}
\def\[{\left[}
\def\]{\right]}
\def\<{\langle}
\def\>{\rangle}
\def\fl#1{\left\lfloor#1\right\rfloor}
\def\rf#1{\left\lceil#1\right\rceil}
\def\N{\mathbb{N}}
\def\cE{\mathcal E}
\def \bc{{\mathbf c}}
\def \bmm{{\mathbf m}}
\def \ba{{\mathbf a}}
\def \N{{\mathbb N}}
\def \Z{{\mathbb Z}}
\def \R{{\mathbb R}}
\def \C{{\mathbb C}}
\def \Esf{\mathsf E}
\def\e{{\mathbf{\,e}}}
\def\R{\mathbb{R}}
\def\Z{\mathbb{Z}}
\newcommand{\commM}[2][]{\todo[#1,color=blue!60]{M: #2}}
\def\mand{\qquad \mbox{and} \qquad}
\def\sfS{{\mathsf S}}
\def\fS{{\mathfrak S}}
\def\fT{{\mathfrak T}}
\def \SQk{\fS_k(\ba; M,N,Q)}
\def \SQ3{\fS_3(\ba; M,N,Q)}
\def \SQf {\widetilde\fS_f(\ba; M,N,Q)}
\def \SQm {\fS(\ba, \bmm; M,N,Q)}
\def \TQm {\fT(\ba, \bmm; M,N,Q)}
\def\eqref#1{(\ref{#1})}
\def\cE{{\mathcal E}}
\def\cL{{\mathcal L}}
\def\cS{{\mathcal S}}
\def\cX{{\mathcal X}}
\def\cY{{\mathcal Y}}
\def\e{{\mathbf{\,e}}}
\def\({\left(}
\def\){\right)}
\def\fl#1{\left\lfloor#1\right\rfloor}
\def\rf#1{\left\lceil#1\right\rceil}
\def\mand{\qquad \mbox{and} \qquad}
\begin{document}

\bibliographystyle{plain}

\title[Additive energy and a large sieve inequality]{Additive energy and a large sieve 
inequality for sparse sequences}
\date{\today}

\author[R. C. Baker] {Roger C.~Baker} 
\address{Department of Mathematics, Brigham Young University, 
Provo, UT 84602, USA} 
\email{baker@math.byu.edu} 

\author[M. Munsch]{Marc Munsch}
\address{DIMA, Universit\`{a} degli Studi di Genova, via Dodecaneso 15, 16146 Genova (GE), Italia}
\email{munsch@dima.unige.it}

\author[I. E. Shparlinski]{Igor E. Shparlinski}  

\address{School of Mathematics and Statistics, 
The University of New South Wales, 
Sydney NSW 2052, 
Australia}
\email{igor.shparlinski@unsw.edu.au}

\subjclass[2010]{Primary 11B57, 11L07, 11N13 ; Secondary 11D45, 11L15}

\keywords{Large sieve, power moduli, congruence equations, Vinogradov mean value theorem, exponential sums, additive energy, Piatetski-Shapiro sequences, Bombieri-Vinogradov theorem.}


\begin{abstract}
We  consider the large sieve inequality for sparse sequences of moduli and give a general result depending on the additive energy (both symmetric and asymmetric) of the sequence of moduli. For example, in the case of monomials $f(X) = X^k$ this allows us to 
 improve, in some ranges of the parameters, the previous bounds of S. Baier and L. Zhao 
(2005), K.~Halupczok (2012, 2015, 2018) and M.~Munsch  (2020). We also consider moduli defined by  polynomials $f(X) \in \Z[X]$, Piatetski-Shapiro sequences and general convex sequences. We then apply our results to obtain a version of the Bombieri--Vinogradov theorem with  Piatetski-Shapiro moduli improving the level of distribution of R.~C.~Baker (2014). 
\end{abstract}

\maketitle

\tableofcontents

\section{Introduction}

\subsection{General set-up} 
 The large sieve, which originated in the work of Linnik~\cite{Lin}, has became over the last decades an extremely powerful method in number theory, 
 see~\cite[Chapter~9]{FrIw} and~\cite[Chapter~7]{IwKow}. More recently new variants of the large sieve over sparse sequences of moduli, such as squares, have appeared and found numerous applications in arithmetic problems of different flavours such as the distribution of primes in sparse 
 progressions~\cite{BaZha3,Bak2,Bak3,Bak4}, the existence of shifted primes divisible by a large square~\cite{Mat},  the study of Fermat quotients~\cite{BFKS,SFermat}, 
 elliptic curves~\cite{BPS,SZ} and several others. 

To formulate a general form of  a large sieve inequality, we recall that  a set of real numbers $\{x_k:~k =1, \ldots,K\}$, is 
called {\it $\delta$-spaced modulo $1$\/} if $\langle x_k-x_j \rangle \geq \delta$ for all $1 \le j < k\le K$,  where $\langle x\rangle$ denotes the distance of a real number $x$ to its closest integer. Then by a result of Montgomery and Vaughan~\cite[Theorem~1]{MoVau} we have 
\begin{equation}\label{eq:classic} 
\sum_{k=1}^K \left\vert \sum_{n=M+1}^{M+N} a_n \e(x_k n)\right\vert^2 \leq (\delta^{-1}+N)  \sum_{n=M+1}^{M+N} \vert a_n\vert^2, 
\end{equation}
where $\e(z) = \exp(2\pi i z)$ for $z \in \C$.
see also~\cite[Theorem~9.1]{FrIw} or~\cite[Theorem~7.7]{IwKow}.  

The case when the   $\{x_k:~k =1, \ldots,K\}$, is the set of {\it Farey fractions\/}
of order $Q$, that is, $\{a/q;~ \gcd(a,q)=1, \ 1 \le a < q, \ q \le Q\}$, has always been 
of special interest due to the wealth of arithmetic applications, including the
celebrated {\it Bombieri-Vinogradov type theorem\/}. 

 We now consider this question for the sequence of perfect $k$-powers for an integer $k\ge2$. 
 Similarly to the large sieve modulo squares used in~\cite{BFKS} to study $p$-divisibility 
 of Fermat quotients modulo $p$,  results of this type can be used to study the $p^k$-divisibility, 
and perhaps complement some results of  Cochrane,   De Silva and  Pinner~\cite{CoDeSPi}.
Furthermore, it is quite feasible that it can also embedded in the work of 
Matom{\"a}ki~\cite{Mat} and Merikoski~\cite{Mer} (or in a weaker but more robust approach 
of Baier and Zhao~\cite{BaZha3}). In turn, this is expected to lead to showing the infinitude of primes $p$ such that 
$p-1$ is divisible by a large perfect $k$th power (rather than by a large perfect square as in~\cite{BaZha3,Mat,Mer}). 

More generally, large sieve inequalities with any sparse sequence of moduli $\{m_n\}$, which we also consider here, are expected to lead to results about shifted primes divisible by large divisors coming from the sequence $\{m_n\}$, the most studied case being sequences of polynomial moduli. Indeed, the approach to shifted primes with large square divisors
of Baier and  Zhao~\cite{BaZha3} seems to extend to other sequences without appealing 
to their multiplicative properties (while the method of~\cite{Mat,Mer} is more tuned to 
squares and perhaps other perfect powers). 

Here we obtain such a result for  Piatetski-Shapiro divisors, see Corollary~\ref{cor:PS Div} below. 
This approach has also been successfully used for sequences of multivariate polynomial moduli in a recent work of
Halupczok   and  Munsch~\cite{HaMu}.

Another appearance of such large sieve can be found in a question of Erd\"os and S\'ark{\" o}zy~\cite{ErdSar} about divisibility properties of sumsets. In the case of square-free numbers Konyagin~\cite{Kon} has shown links between such problems and  $L^1$-norms of exponential sums considered by Balog and Ruzsa~\cite{BalRu}. Most certainly these ideas
extend to $k$-free numbers (that is, to integers which are not divisible by $k$th power of a 
prime).

Given a sequence $\ba = \{a_n\}$ of complex numbers and  positive integers $k$, $M$, $N$ and $Q$, we consider the sum 
 $$
 \SQk =  \sum_{q=1}^Q \sum_{\substack{ a=1 \\ \gcd(a,q)=1}}^{q^k} \left\vert \sum_{n=M+1}^{M+N} a_n \e\left(\frac{a}{q^k}n\right)\right\vert^2. 
 $$
 
 Furthermore, given a polynomial $f(T) \in \Z[T]$ with a positive leading coefficient,  
 we consider more general sums with polynomial moduli $f(q)$, which are defined as follows
  $$
 \SQf =  \sum_{q =1}^Q \sum_{\substack{ a=1 \\ \gcd(a,f(q))=1}}^{f(q) } \left\vert \sum_{n=M+1}^{M+N} a_n \e\left(\frac{a}{f(q)}n\right)\right\vert^2 , 
 $$
 where without any loss of generality we always assume 
 that $f(q) \ge 1$ for any integer $q\ge 1$.  
 
 For a general sequence of $\bmm = \{m_j\}$ of integers, we consider the sums
$$\SQm =  \sum_{j =1}^Q \sum_{\substack{ a=1 \\ \gcd(a,m_j)=1}}^{m_j } \left\vert \sum_{n=M+1}^{M+N} a_n \e\left(\frac{a}{m_j}n\right)\right\vert^2 .
 $$

A large sieve inequality is an estimate of the following kind
\begin{equation}\label{eq:largesieve} 
 \SQk = O\( \Delta_k(N,Q)  \|\ba \|^2 \)
\end{equation} 
and similarly for $\SQf$ and $\SQm$, where 
$$
\|\ba \| =  \(\sum_{n=M+1}^{M+N} |a_n|^2\)^{1/2} 
$$
and $\Delta_k(N,Q) $ is some function of the parameters $N$ and $Q$ (which could both depend on $k$)
and the implied constant may depend on $k$.

In the simplest case $k=1$, the bound
 $$
 \SQk \le \(Q^2+N -1\) \|\ba\|^2
 $$
  is classical  and in fact is a special case of 
the following general  version of the large sieve inequality~\eqref{eq:classic}.

Here we are mostly interested in the case $k\ge 2$. For $k=2$ the best known result 
is due to Baier and Zhao~\cite{BaZha2}.

 \subsection{Previous results}\label{previous}
We start with an observation due to Zhao~\cite{Zhaoacta}, that the classical large sieve inequality~\eqref{eq:classic} 
implies~\eqref{eq:largesieve} with 
\begin{equation}\label{eq:trivialdelta} 
\Delta_k(N,Q) = \min\left\{Q^{2k}+N,Q(Q^k+N)\right\}.
\end{equation} 
Zhao~\cite{Zhaoacta} also conjectures that we can take 
\begin{equation}\label{eq:conj} 
\Delta_k(N,Q) =(Q^{k+1}+N) N^{o(1)}
\end{equation} 
 in~\eqref{eq:largesieve}, which is based on the heuristic that  the fractions with power denominator are 
 sufficiently regularly spaced.  Note that this conjecture is nontrivial only for 
\begin{equation}\label{eq:crit range} 
Q^k \leq N \leq Q^{2k}
\end{equation} 
as otherwise it follows from~\eqref{eq:trivialdelta}.
A recent result of Kerr~\cite{KerrB} gives a version of the conjecture~\eqref{eq:conj}  with respect to 
the $L^1$-norm.

Several authors have obtained improvements of~\eqref{eq:trivialdelta} in the critical 
range~\eqref{eq:crit range}, we refer to~\cite{Mun} 
for a short survey and comparison of various bounds. 

 First,  Zhao~\cite[Theorem~3]{Zhaoacta} has presented an inequality of type~\eqref{eq:largesieve} with 
\begin{equation}\label{eq:Zhaobound} 
\Delta_k(N,Q)  = Q^{k+1}+\(NQ^{ 1-1/\kappa_k }+N^{1-1/\kappa_k }Q^{1+k/\kappa_k}\)N^{o(1)}
\end{equation} 
where 
\begin{equation}\label{eq:kappa} 
\kappa_k = 2^{k-1}.
\end{equation} 

 Baier and Zhao~\cite[Theorem~1]{BaZha1} have shown that we can take 
\begin{equation}\label{BaierZhao} 
\Delta_k(N,Q) = \(Q^{k+1}+N+N^{1/2}Q^k\) N^{o(1)}
\end{equation} 
which improves~\eqref{eq:Zhaobound} in the range 
$$Q^{2k-2+2k/\kappa_k} \le N \le Q^{2k}.
$$

 These results have been sharpened in a series of works of Halupczok~\cite{KarinIJNT,Karinquart,Karinsurvey,preKarin}, using the progress made on the Vinogradov mean value theorem  by Bourgain, Demeter and Guth~\cite{Bourgainvino} and
Wooley~\cite{Wooley,Wooley2}. Consequently, we can take
  \begin{equation}\label{Karin2k}\Delta_k(N,Q)=
\(Q^{k+1}+\min\left\{A_k(Q, N), N^{1-\omega_k} Q^{1+(2k-1)\omega_k}\right\}\)  N^{o(1)} \end{equation} 
with 
$$
\omega_k=\frac{1}{(k-1)(k-2)+2}
$$ 
and
$$
A_k(Q,N)=N Q^{1-1/(k(k-1))} +N^{1-1/(k(k-1))}Q^{k/(k-1)} .
$$  
In fact, one can use~\eqref{Karin2k} to bound  $\SQf $ in an analogue of~\eqref{eq:largesieve} 
 for any polynomial $f$ of degree $k$,  see~\cite[Section~6]{preKarin}.
 
Recently, Munsch~\cite{Mun} has further refined this estimate and obtained
\begin{equation}\label{Marc} 
\Delta_k(N,Q)  =   Q^{(k+2)/(k+1) + o(1)}N^{1-1/(k(k+1))}.
\end{equation} 

We also note that in the special case of $k=3$, Baier and Zhao~\cite[Theorem~2]{BaZha1}  have given the following estimate
$$
\SQ3 \le \(Q^{4}+\max\left\{ N^{9/10} Q^{6/5} , NQ^{6/7}\right\}\)  N^{o(1)}. 
$$
 
 Finally we mention that we are not aware of any large sieve estimates with arbitrary sequences, that is, 
 for $\SQm$ which depend on some additive properties  of the sequence of moduli $\bmm$, in particular, 
 on its {\it additive energy} as in this work, see~\eqref{eq:addenergy}  and~\eqref{eq:asymenergy} below.

 \subsection{New results} 
 Let us introduce the following quantities. We define the additive energy of a finite set $\cS\subseteq \mathbb{R}$ to be 
 \begin{equation}\label{eq:addenergy} 
 \Esf^+(\cS)=\#\{ (s_1,t_1,s_2,t_2) \in \cS^4:~s_1+t_1= s_2+t_2\}
 \end{equation}
 and the ``asymmetric'' additive energy with respect to the parameter $h \in \mathbb{Z}$ to be 
 \begin{equation}\label{eq:asymenergy}
 \Esf^+_{h}(\cS)=\#\{ (s_1,t_1,s_2,t_2) \in \cS^4:~s_1+t_1= s_2+t_2+h\}.
  \end{equation}

  It is also convenient to define 
  $$
  \Esf^+_{\star}(\cS)   =  \max_{h\neq 0} \Esf^+_{h}(\cS).
  $$
  In fact it is easy to show that $ \Esf^+_{\star}(\cS)  \le   \Esf^+(\cS)$, however for some sequences 
   $ \Esf^+_{\star}(\cS)$ is much smaller than $ \Esf^+(\cS)$, and in    Theorem~\ref{thm:k>4}  we take 
   advantage of this.

 Now for a sequence $\bmm = \{m_j\}$ of integers and any integer $Q\geq 1$, we denote by $\bmm_Q= \{m_1,\dots,m_Q\}$ 
 the set of its first $Q$ elements.  We now show that a variant of the ideas  of~\cite{CGOS, G-MRST, Kerrboxes}, (rather than using results of~\cite{CGOS, G-MRST, Kerrboxes} directly as in~\cite{Mun}), 
allows us to obtain a general result depending on the additive energies of the truncations of the sequence of moduli.

We also assume that the sequence of moduli $\bmm= \{m_j\}$ satisfies the following additional regularity of growth hypothesis:
there exists   $\alpha > 0$ such that 
 \begin{equation}\label{eq:mj growth}
m_j =  j^{\alpha +o(1)}, \qquad j \to \infty.
\end{equation}

 \begin{theorem}\label{thm:energy} 
  With $\ba= \{a_n\}$, $\bmm=\{m_j\}$, $M$, $N$ and $Q$ as above and also satisfying~\eqref{eq:mj growth} and $Q^{\alpha} \leq N \leq Q^{2\alpha}$, we have 
$$\SQm \le \(N \Esf^+(\bmm_Q)^{1/4}+N^{3/4}Q^{\alpha/2}  \Esf^+_{\star}(\bmm_Q)^{1/4}\) Q^{o(1)}  \|\ba\|^2.$$
\end{theorem}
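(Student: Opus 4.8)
The plan is to run the standard Gallagher-type large sieve argument: duplicate the square, sum over $n_1, n_2$ in the window, and exploit the small denominators $1/m_j^{\,?}$ — here $1/m_j$ — via a bound on how often $a/m_i$ and $b/m_j$ are close modulo $1$. Concretely, after Cauchy--Schwarz and the usual opening, $\fS(\ba,\bmm;M,N,Q)$ is controlled by a sum of the shape
$$
\sum_{n_1,n_2=M+1}^{M+N} |a_{n_1}||a_{n_2}| \sum_{i,j\le Q}\ \sum_{\substack{a\le m_i,\ b\le m_j\\ (a,m_i)=(b,m_j)=1}} \mathbf 1\!\left[\left\langle \frac{a}{m_i}-\frac{b}{m_j}\right\rangle \le \frac{1}{N}\right],
$$
or, more in the spirit of~\cite{CGOS, G-MRST, Kerrboxes}, one counts the fractions $a/m_i$ lying in a short interval and bounds that count in terms of solutions of $a m_j - b m_i = r$ with $|r|$ small. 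The clustering of these Farey-like fractions with denominators $m_i$ ($i \le Q$) is governed by the number of $(a,b,m_i,m_j)$ with $|am_j - bm_i|$ bounded, and this is precisely where the additive energy of $\bmm_Q$ enters. Since $a \le m_i \asymp Q^{\alpha}$ and likewise $b$, and the "gap" parameter ranges up to roughly $m_im_j/N \asymp Q^{2\alpha}/N$, which by the hypothesis $Q^{\alpha}\le N\le Q^{2\alpha}$ lies between $1$ and $Q^{\alpha}$, the diagonal contribution $r=0$ is matched against $\Esf^+(\bmm_Q)$ while the off-diagonal terms $r\ne0$ are matched against $\Esf^+_\star(\bmm_Q) = \max_{h\ne0}\Esf^+_h(\bmm_Q)$.

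The key steps, in order: (i) reduce $\SQm$ to an incidence/counting quantity by the Montgomery--Vaughan or Gallagher square-sieve inequality~\eqref{eq:classic}, applied not to the full Farey dissection but to the $\delta$-spaced subsets obtained after fixing a short interval — equivalently, smooth the counting by a Fej\'er-type kernel so the characteristic function of $\langle\cdot\rangle\le 1/N$ is replaced by something with controlled Fourier transform; (ii) expand, and recognize the resulting count of quadruples $(m_i,m_j,m_k,m_\ell)$ (or $(a,b,m_i,m_j)$) as a weighted sum over $h$ of $\Esf^+_h(\bmm_Q)$, using~\eqref{eq:mj growth} to convert the multiplicative relation $am_j - bm_i \equiv 0$ into an additive one after clearing the common size $Q^{\alpha}$ — here one uses that $m_j = j^{\alpha+o(1)}$ makes the $m_j$ essentially a single-scale set, so divisor-type losses are absorbed in $Q^{o(1)}$; (iii) split the $h$-sum into $h=0$, contributing $N\,\Esf^+(\bmm_Q)^{1/4}Q^{o(1)}$ after a final Cauchy--Schwarz / Hölder balancing of the four $m$-variables against the $a_n$'s, and $h\ne0$, where the number of admissible nonzero $h$ is $O(Q^{\alpha}/N \cdot N) = O(Q^{\alpha})$-ish — more carefully it is controlled so that summing $\Esf^+_\star(\bmm_Q)$ over them yields the $N^{3/4}Q^{\alpha/2}\Esf^+_\star(\bmm_Q)^{1/4}$ term; (iv) collect the two contributions and absorb all logarithmic and divisor factors into $Q^{o(1)}$, invoking $N \le Q^{2\alpha}$ (hence $N = Q^{O(1)}$, so $N^{o(1)} = Q^{o(1)}$) to harmonize the error notation.

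The main obstacle I expect is step (ii)–(iii): getting the exponents $1/4$, $3/4$, $\alpha/2$ right requires choosing the Hölder/Cauchy--Schwarz split of the four moduli variables and the two $n$-variables in exactly the correct proportions, and simultaneously ensuring that the passage from the arithmetic condition $am_j \equiv bm_i \pmod{?}$ (with $a,b$ coprime to the respective moduli) to the clean additive-energy count does not lose more than $Q^{o(1)}$. In particular one must handle the coprimality constraints (via Möbius, which contributes only $Q^{o(1)}$) and must verify that the range of the difference parameter $h$ is genuinely $\le Q^{\alpha}$, i.e.\ that the hypothesis $N\ge Q^{\alpha}$ is used sharply — if $N$ were smaller the off-diagonal $h$-range would explode and the $\Esf^+_\star$ term would no longer dominate correctly. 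A secondary technical point is that~\eqref{eq:mj growth} is only an asymptotic ($j\to\infty$), so the finitely many small $m_j$ must be peeled off and estimated trivially, which is harmless but should be mentioned.
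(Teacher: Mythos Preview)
Your overall plan matches the paper's: dyadically localise in $j$, reduce $\SQm$ to the maximal clustering quantity $M(\bmm;N,Q)$ of the fractions $a/m_j$ (this standard step, as in~\cite{Mun,Zhaoacta}, gives $\SQm \ll N\, M(\bmm;N,Q)\,\|\ba\|^2$), and then bound $M(\bmm;N,Q)$ by a fourth-power argument that produces the additive energies.

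Where your description goes astray is in steps~(ii)--(iii). There is no ``multiplicative-to-additive conversion by clearing the common size $Q^\alpha$'', and there is no H\"older applied to the coefficients $a_n$; the factor $N$ and the $\|\ba\|^2$ come entirely from the large-sieve reduction above, while the fourth roots live purely on the moduli side. The mechanism is simpler than you suggest. Fix one fraction $a/m_k$; the number $T$ of nearby $b/m_j$ is at most the number of pairs $(j,v)$ with $am_j\equiv v\pmod{m_k}$, $j\le 2Q$, $|v|\le V$ where $V\asymp Q^{2\alpha}/N$. This congruence is already \emph{linear} in $m_j$ (multiply through by $a^{-1}$), so raising the count to the fourth power directly gives
\[
T^4 \le \#\Bigl\{(j_1,\dots,j_4,w):\ m_{j_1}+m_{j_2}-m_{j_3}-m_{j_4}\equiv a^{-1}w \pmod{m_k},\ |w|\le 4V\Bigr\}.
\]
Because the dyadic restriction and~\eqref{eq:mj growth} force $m_j\ll Q^{\alpha}\asymp m_k$, this congruence lifts to an equation over $\Z$ with $O(1)$ choices for the multiple of $m_k$; separating the single zero right-hand side from the $O(V)$ nonzero ones yields $T^4\ll \Esf^+(\bmm_Q)+V\,\Esf^+_\star(\bmm_Q)$ (this is the paper's Lemma~\ref{lem:boxesgen}). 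Taking fourth roots and multiplying by the factor $N$ from the first reduction produces both terms of the theorem, since $N\cdot V^{1/4}=N^{3/4}Q^{\alpha/2+o(1)}$. Coprimality is simply dropped for an upper bound, so no M\"obius step is needed; and the hypothesis $N\ge Q^\alpha$ is not what controls the $h$-range --- that control comes from $m_k\asymp Q^\alpha$ via the dyadic localisation.
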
 

Good bounds are known for the additive energy of a large class of sequences. For instance, for any {\it convex\/} sequence  of moduli, that is, a sequence  $\bmm= \{m_j\}$ with 
$$
m_j - m_{j-1} < m_{j+1} - m_j, \qquad j = 2, 3, \ldots, 
$$ 
using   the general bound of Shkredov~\cite[Theorem~1]{Shkredovenergy}, which asserts that 
$$
\Esf_\star^+(\bmm_Q) \le \Esf^+(\bmm_Q) \le  Q^{32/13+o(1)}
$$
(see also~\eqref{eq: Energy-Triv} below), 
we deduce the following result.

\begin{cor}\label{cor:convexcase}
Under the conditions of Theorem~\ref{thm:energy}  and assuming that  $\bmm= \{m_j\}$ is a convex sequence, we have
$$\SQm \le N^{3/4}Q^{\alpha/2+8/13}Q^{o(1)} \|\ba\|^2.$$
\end{cor}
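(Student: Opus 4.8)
The plan is to derive Corollary~\ref{cor:convexcase} as an immediate consequence of Theorem~\ref{thm:energy} combined with the additive energy bound of Shkredov for convex sequences. Since every hypothesis of Theorem~\ref{thm:energy} is explicitly carried over to the corollary (the regularity of growth~\eqref{eq:mj growth} with exponent $\alpha$, and the range $Q^{\alpha} \le N \le Q^{2\alpha}$), the only new input needed is an estimate for the additive energies $\Esf^+(\bmm_Q)$ and $\Esf^+_{\star}(\bmm_Q)$ appearing in the theorem's bound.

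First I would invoke the convexity hypothesis $m_j - m_{j-1} < m_{j+1} - m_j$ and apply Shkredov~\cite[Theorem~1]{Shkredovenergy}, which gives
$$
\Esf^+_{\star}(\bmm_Q) \le \Esf^+(\bmm_Q) \le Q^{32/13 + o(1)}.
$$
(The first inequality $\Esf^+_{\star}(\bmm_Q) \le \Esf^+(\bmm_Q)$ is the trivial one noted after~\eqref{eq:asymenergy} and also recorded as~\eqref{eq: Energy-Triv}.) Substituting these two bounds into the conclusion of Theorem~\ref{thm:energy} yields
$$
\SQm \le \left( N \cdot Q^{8/13 + o(1)} + N^{3/4} Q^{\alpha/2} \cdot Q^{8/13 + o(1)} \right) Q^{o(1)} \|\ba\|^2,
$$
since $(32/13)/4 = 8/13$.

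Then the remaining step is to check that the second term dominates, i.e. that $N \le N^{3/4} Q^{\alpha/2} Q^{o(1)}$, equivalently $N^{1/4} \le Q^{\alpha/2 + o(1)}$, equivalently $N \le Q^{2\alpha + o(1)}$ — which is exactly the upper end of the admissible range $Q^{\alpha} \le N \le Q^{2\alpha}$ assumed in Theorem~\ref{thm:energy}. Hence the first term $N Q^{8/13+o(1)}$ is absorbed into $N^{3/4} Q^{\alpha/2 + 8/13 + o(1)}$, and after collecting all the $Q^{o(1)}$ factors one obtains
$$
\SQm \le N^{3/4} Q^{\alpha/2 + 8/13} Q^{o(1)} \|\ba\|^2,
$$
as claimed. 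There is no real obstacle here: the proof is a direct substitution followed by a one-line comparison of the two terms using the upper bound in the hypothesis on $N$. The only point requiring a word of care is that the $o(1)$ in Shkredov's exponent and the $o(1)$ in the growth hypothesis~\eqref{eq:mj growth} should be understood as $Q \to \infty$, matching the $Q^{o(1)}$ convention already in force in Theorem~\ref{thm:energy}, so the combined error factor remains of the form $Q^{o(1)}$.
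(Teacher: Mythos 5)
Your proposal is correct and follows exactly the route the paper takes: substitute Shkredov's bound $\Esf^+_{\star}(\bmm_Q)\le\Esf^+(\bmm_Q)\le Q^{32/13+o(1)}$ into Theorem~\ref{thm:energy}, then absorb the $N\,Q^{8/13+o(1)}$ term into $N^{3/4}Q^{\alpha/2+8/13+o(1)}$ using the hypothesis $N\le Q^{2\alpha}$. The paper leaves the absorption step implicit, but it is exactly the one-line comparison you carry out, so there is nothing to add or correct.
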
 

We observe   that the bound of Corollary~\ref{cor:convexcase} is superior to~\eqref{eq:trivialdelta} (taken with 
$k = 
\alpha$) in the range $Q^{2\alpha-20/13} \leq N \leq Q^{2\alpha-32/39}$.

If more information is available about the sequence  $\bmm= \{m_j\}$
then one can also used stronger bounds from~\cite{BHR}.

Furthermore, it follows immediately from the result of Robert and Sargos~\cite[Theorem~2]{RoSa} 
that for any fixed real  $\alpha \ne 0,1$,  for the {\it Piatetski-Shapiro\/}  sequence $m_j = \fl{j^\alpha}$ we have 
 \begin{equation}\label{eq:RS bound}
\Esf_\star^+(\bmm_Q) \le \Esf^+(\bmm_Q) \le  \(Q^{2}  + Q^{4 -\alpha} \) Q^{o(1)}.
\end{equation}

\begin{cor}\label{cor:n-alpha}
Under the conditions of Theorem~\ref{thm:energy}  and assuming that  $\bmm= \{m_j\}$ with  
$m_j = \fl{j^\alpha}$  for any fixed real  $\alpha \ne 0,1$,  we have
$$\SQm \le N^{3/4}\(Q^{(1+\alpha)/2} + Q^{1+\alpha/4} \) Q^{o(1)} \|\ba\|^2.
$$
\end{cor}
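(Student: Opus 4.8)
The plan is to deduce Corollary~\ref{cor:n-alpha} by substituting the Robert--Sargos bound~\eqref{eq:RS bound} into Theorem~\ref{thm:energy}, since the hypothesis $m_j = \fl{j^\alpha}$ immediately gives~\eqref{eq:mj growth} with the stated $\alpha$ (for $\alpha < 0$ the sequence is not of this shape, so implicitly $\alpha > 0$, $\alpha \ne 1$, and the range $Q^\alpha \le N \le Q^{2\alpha}$ is exactly the one in the theorem). First I would record that $\Esf^+(\bmm_Q) \le (Q^2 + Q^{4-\alpha})Q^{o(1)}$ and that the same bound holds for $\Esf^+_\star(\bmm_Q)$, so that both energy factors in Theorem~\ref{thm:energy} can be replaced by this single quantity. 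Raising to the $1/4$ power and using $(x+y)^{1/4} \le x^{1/4} + y^{1/4}$ for nonnegative $x,y$, one gets $\Esf^+(\bmm_Q)^{1/4} \le (Q^{1/2} + Q^{1 - \alpha/4})Q^{o(1)}$, and likewise for the starred energy.

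Next I would plug these into the two terms of the bound in Theorem~\ref{thm:energy}. The first term $N\,\Esf^+(\bmm_Q)^{1/4}Q^{o(1)}$ becomes $N(Q^{1/2} + Q^{1-\alpha/4})Q^{o(1)}$. The second term $N^{3/4}Q^{\alpha/2}\Esf^+_\star(\bmm_Q)^{1/4}Q^{o(1)}$ becomes $N^{3/4}Q^{\alpha/2}(Q^{1/2} + Q^{1-\alpha/4})Q^{o(1)} = N^{3/4}(Q^{(1+\alpha)/2} + Q^{1+\alpha/4})Q^{o(1)}$, which is already the claimed expression. So it only remains to check that the first term is dominated by the second in the relevant range, i.e. that $N(Q^{1/2}+Q^{1-\alpha/4}) \le N^{3/4}(Q^{(1+\alpha)/2}+Q^{1+\alpha/4})$ up to $Q^{o(1)}$; equivalently, after dividing by $N^{3/4}$, that $N^{1/4}(Q^{1/2}+Q^{1-\alpha/4}) \le (Q^{(1+\alpha)/2}+Q^{1+\alpha/4})Q^{o(1)}$. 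Using $N \le Q^{2\alpha}$ we have $N^{1/4} \le Q^{\alpha/2}$, so $N^{1/4}Q^{1/2} \le Q^{(\alpha+1)/2}$ and $N^{1/4}Q^{1-\alpha/4} \le Q^{1 + \alpha/4}$, which is exactly what is needed. Hence the first term is absorbed and the bound of Theorem~\ref{thm:energy} collapses to $N^{3/4}(Q^{(1+\alpha)/2} + Q^{1+\alpha/4})Q^{o(1)}\|\ba\|^2$.

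The only genuinely non-routine input is the Robert--Sargos estimate~\eqref{eq:RS bound}, which is quoted from the literature, and the (equally standard) observation that $\Esf^+_\star \le \Esf^+$ recorded just after~\eqref{eq:asymenergy}; everything else is the elementary algebra of exponents above. I do not expect any real obstacle: the main thing to be careful about is keeping the $Q^{o(1)}$ factors bookkept correctly (there are finitely many of them, and their product is still $Q^{o(1)}$) and making sure the case analysis in $N^{1/4}(Q^{1/2}+Q^{1-\alpha/4})$ versus the target is handled for both $0 < \alpha < 2$ and $\alpha \ge 2$ — but since in all cases we only use $N^{1/4} \le Q^{\alpha/2}$, term by term, no further case split is actually required. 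Thus the corollary follows directly.
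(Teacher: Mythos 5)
Your proposal is correct and follows exactly the paper's intended route: substitute the Robert--Sargos bound~\eqref{eq:RS bound} into Theorem~\ref{thm:energy} and then absorb the $N\,\Esf^+(\bmm_Q)^{1/4}$ term using $N^{1/4}\le Q^{\alpha/2}$, which follows from the hypothesis $N\le Q^{2\alpha}$. The paper states this corollary without spelling out the exponent bookkeeping, and your verification that $N^{1/4}Q^{1/2}\le Q^{(1+\alpha)/2}$ and $N^{1/4}Q^{1-\alpha/4}\le Q^{1+\alpha/4}$ is exactly the elementary check that is implicit there.
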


 In Section~\ref{PSmoduli}, we show that Corollary~\ref{cor:n-alpha} combined with the ideas of~\cite{BaZha3,Bak2,Bak3} lead to Bombieri-Vinogradov type theorems for primes in progressions with 
Piatetski-Shapiro moduli (see also~\cite{BakFri} for questions of similar flavor).
 
As a consequence of Theorem~\ref{thm:energy} we also obtain new bounds on $\SQk$ for $k \ge 5$. 
Unfortunately the case of $k=4$ is missing a substantial 
ingredient and so we have to exclude it. In the case of $k=3$  our method works but does not improve previous results, see also Section~\ref{sec:comm}. 
 
 \begin{theorem}\label{thm:k>4}
  With $\{a_n\}$, $M$, $N$ and $Q$ as above and $k \ge 5$,  we have 
$$\SQk \le \left(NQ^{1/2}+N^{3/4}Q^{k/2+1/4+1/(2k^{1/2})}\right) Q^{o(1)}  \|\ba\|^2.$$
\end{theorem}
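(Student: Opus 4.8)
The plan is to deduce Theorem~\ref{thm:k>4} from Theorem~\ref{thm:energy} applied to the specific sequence $m_j = j^k$, so the first task is to verify the hypotheses and then supply the required additive-energy inputs. The sequence $m_j = j^k$ trivially satisfies the growth hypothesis~\eqref{eq:mj growth} with $\alpha = k$, since $m_j = j^{k+o(1)}$. One subtlety is that $\fS(\ba,\bmm;M,N,Q)$ as defined sums over $j=1,\dots,Q$ with denominators $m_j = j^k$, whereas $\SQk$ sums over $q=1,\dots,Q$ with denominator $q^k$; these are literally the same object once we set $m_j = j^k$, so no reindexing cost is incurred. Thus, on the range $Q^k \le N \le Q^{2k}$, Theorem~\ref{thm:energy} immediately gives
\[
\SQk \le \left(N\,\Esf^+(\bmm_Q)^{1/4} + N^{3/4} Q^{k/2}\,\Esf^+_\star(\bmm_Q)^{1/4}\right) Q^{o(1)} \|\ba\|^2,
\]
and it remains only to feed in good bounds for the two energies of the set $\{1^k, 2^k, \dots, Q^k\}$.

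For the symmetric energy $\Esf^+(\bmm_Q)$, I would use the trivial (diagonal) bound $\Esf^+(\bmm_Q) \le Q^{2+o(1)}$: this holds because for the strictly convex sequence $j^k$ (with $k\ge 2$), each sum $s_1 + t_1$ with $s_1 \le t_1$ determines the unordered pair $\{s_1,t_1\}$ up to $Q^{o(1)}$ choices — indeed for convex sequences the number of representations of any integer as a sum of two terms is bounded (in fact by $1$ in the strictly convex case, up to swapping). That already gives the first term $N Q^{1/2 + o(1)}$ in the theorem. The heart of the matter is the asymmetric energy $\Esf^+_\star(\bmm_Q) = \max_{h\ne 0} \Esf^+_h(\bmm_Q)$, i.e. bounding the number of solutions to
\[
x_1^k + y_1^k = x_2^k + y_2^k + h, \qquad 1 \le x_1,y_1,x_2,y_2 \le Q,
\]
uniformly in $h \ne 0$. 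For the stated exponent to come out, one needs a bound of the shape $\Esf^+_\star(\bmm_Q) \le Q^{2 + 2/k^{1/2} + o(1)}$; plugging this in gives the second term $N^{3/4} Q^{k/2 + o(1)} \cdot Q^{1/2 + 1/(2k^{1/2}) + o(1)} = N^{3/4} Q^{k/2 + 1/4 + 1/(2k^{1/2}) + o(1)}$, matching the claim. So the key lemma to isolate and prove is precisely this uniform-in-$h$ asymmetric energy estimate for $k$-th powers.

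To prove that asymmetric-energy bound, the natural route is a circle-method / exponential-sum argument: write $\Esf^+_h(\bmm_Q)$ as an integral $\int_0^1 |T(\theta)|^4 \e(-h\theta)\, d\theta$ where $T(\theta) = \sum_{x\le Q} \e(\theta x^k)$, and then exploit cancellation from the factor $\e(-h\theta)$ together with a Weyl-type or Vinogradov mean-value estimate for the moments of $|T(\theta)|$. The exponent $2/k^{1/2}$ strongly suggests invoking Weyl differencing iterated about $k^{1/2}$ times, or equivalently a careful application of Wooley's efficient congruencing / the Bourgain–Demeter–Guth resolution of the Vinogradov mean value theorem, in a form that keeps track of the shift $h$ (this is where "the variant of the ideas of~\cite{CGOS, G-MRST, Kerrboxes}" mentioned before Theorem~\ref{thm:energy} is relevant — those papers develop exactly such asymmetric/shifted energy bounds). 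I expect the main obstacle to be establishing this uniform-in-$h$ bound with the sharp exponent $2/k^{1/2}$: one must balance a major-arc contribution (where $\e(-h\theta)$ gives no help and one relies on the sparsity/convexity of the $k$-th powers and the constraint $h\ne 0$) against a minor-arc contribution (controlled by Weyl/Vinogradov), and the optimisation over the dissection parameter is what produces the $k^{1/2}$. The remaining steps — checking that $Q^k \le N \le Q^{2k}$ is the correct critical range for the conclusion to be meaningful, and that outside it the trivial bound~\eqref{eq:trivialdelta} already suffices — are routine. Finally I would remark, as the statement itself flags, that for $k=4$ this asymmetric-energy input is not available at the required strength (hence the exclusion), and for $k=3$ the resulting bound, while valid, does not beat~\eqref{Marc}.
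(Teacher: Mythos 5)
Your overall reduction is the right one (apply Theorem~\ref{thm:energy} with $m_j = j^k$, $\alpha = k$, use the trivial symmetric-energy bound $\Esf^+(\bmm_Q) \le Q^{2+o(1)}$, and then supply an asymmetric-energy bound), and that is indeed how the paper proceeds. But there are two genuine problems with the rest.

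First, you mis-state the required asymmetric-energy input. You claim one needs $\Esf^+_\star(\bmm_Q) \le Q^{2 + 2/k^{1/2} + o(1)}$ and then "verify" that $Q^{k/2}\cdot Q^{(2 + 2/k^{1/2})/4}$ equals $Q^{k/2 + 1/4 + 1/(2k^{1/2})}$; in fact $Q^{k/2}\cdot Q^{1/2 + 1/(2k^{1/2})} = Q^{k/2 + 1/2 + 1/(2k^{1/2})}$, which is too big. The exponent in the theorem forces the stronger bound $\Esf^+_\star(\bmm_Q) \le Q^{1 + 2/k^{1/2} + o(1)}$, a full power of $Q$ better than what you wrote. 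This is not cosmetic: $Q^{2+2/k^{1/2}}$ is close to the trivial bound and would be much easier to obtain, whereas $Q^{1+2/k^{1/2}}$ is a deep estimate.

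Second, and more seriously, you propose to establish that asymmetric-energy bound by the circle method together with Weyl differencing or Vinogradov/BDG mean-value estimates. This is not the paper's route and is very unlikely to produce the stated exponent: Weyl differencing saves roughly $2^{1-k}$ and BDG/Wooley saves roughly $1/(k(k-1))$, neither of which gives a $1/k^{1/2}$-type gain. The required bound $\Esf^+_h(\cS_{U,k}) \le U^{1 + 2/k^{1/2} + o(1)}$ uniformly over $h \neq 0$ is exactly Lemma~\ref{lem:Morm}, which the paper has already set up: it follows (as shown in~\cite{CKMS}) from Marmon's Theorem~1.4 in~\cite{Marm}, a point-counting result for the surface $x_1^k + y_1^k - x_2^k - y_2^k = h$ obtained by the determinant/polynomial sieve method, not by Fourier-analytic means. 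The paper's actual proof of Theorem~\ref{thm:k>4} is a one-line deduction: Theorem~\ref{thm:energy} $+$ the trivial bound~\eqref{eq: Energy-Triv} $+$ Lemma~\ref{lem:Morm}. So the missing ingredient in your argument is precisely the citation of Lemma~\ref{lem:Morm} (Marmon), and the speculative circle-method subroutine you sketch in its place would not close the gap at the needed strength.
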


We  now recall the definition~\eqref{eq:kappa}. Our next result is essentially due to Zhao~\cite[Theorem~3]{Zhaoacta}, see~\eqref{eq:Zhaobound} who presented it only 
for monomials. However,  the approach undoubtedly works for any polynomial. 
However since in~\cite{Zhaoacta} only a brief sketch of the proof of~\eqref{eq:Zhaobound} is given, here we present a complete but slightly shorter proof,  which uses a different technique
and which we hope can find other applications. 
Finally, we formulate this bound in full generality for polynomial moduli (this can also be 
obtained via the method of~\cite{Zhaoacta}). 

\begin{theorem}\label{thm:f} Let $f(T) \in \Z[T]$ be of degree $k \ge 2$. 
With $\{a_n\}$,  $M$, $N$ and $Q$  as above we have 
 \begin{align*}
& \SQf \\
& \qquad \le  \(Q^{k+1}  +    \(N Q^{1-1/\kappa_k }
+    N^{1-1/\kappa_k} Q^{1+k/\kappa_k} \) Q^{o(1)} \) \|\ba\|^2.
 \end{align*}
\end{theorem}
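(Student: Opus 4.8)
The plan is to run the classical duality argument for large‑sieve inequalities, following the general strategy of Zhao, but to organize the spacing/counting step directly around a Weyl‑type exponential sum estimate; the exponent $\kappa_k=2^{k-1}$ will emerge from iterating $k-1$ Weyl differencing steps on a polynomial of degree $k$. By duality, $\SQf\le\Delta\|\ba\|^2$ is equivalent to $\sum_{M<n\le M+N}\bigl|\sum_{q,a}b_{q,a}\e\bigl(\tfrac{a}{f(q)}n\bigr)\bigr|^2\le\Delta\sum_{q,a}|b_{q,a}|^2$ for all $(b_{q,a})$. Expanding the left side, using $\bigl|\sum_{M<n\le M+N}\e(xn)\bigr|\le\min(N,1/(2\|x\|))$ and $|b_\mu b_\nu|\le\tfrac12(|b_\mu|^2+|b_\nu|^2)$, it suffices to prove that for every fixed admissible fraction $a_1/f(q_1)$ (so $1\le q_1\le Q$, $1\le a_1\le f(q_1)$, $\gcd(a_1,f(q_1))=1$),
$$
\sum_{q=1}^Q\ \sum_{\substack{a=1\\\gcd(a,f(q))=1}}^{f(q)}\min\!\left(N,\frac{1}{2\|a_1/f(q_1)-a/f(q)\|}\right)\ \ll\ \Delta .
$$
The finitely many $q$ with $f(q)=f(q_1)$ contribute $O(N+Q^k)$ (for them $\{a/f(q)\}$ is an arithmetic progression modulo $1$ of spacing $\ge f(q_1)^{-1}$), which is absorbed into the term $Q^{k+1}$, so we may assume $f(q)\ne f(q_1)$ throughout.

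For such $q$ the integer $a_1f(q)-af(q_1)$ is nonzero, hence the fraction $a/f(q)$ closest to $a_1/f(q_1)$ lies at distance at least $\|a_1 f(q)/f(q_1)\|/f(q)$ from it, while the remaining values of $a$ contribute $O(f(q)\log Q)$ to the inner sum; summing over $q$, this ``far'' part is $O(Q^{k+1}\log Q)$, again absorbed into $Q^{k+1}$. After dyadic decompositions in $f(q)$ (write $R<q\le 2R$ with $R\le Q$) and in $\|a_1f(q)/f(q_1)\|$, the remaining ``close approach'' contribution is bounded, up to a factor $Q^{o(1)}$, by
$$
\sup_{\substack{R\le Q\ \mathrm{dyadic}\\ Q^{-k}\le\eta\le 1}}\ \min\!\bigl(N,\ R^k/\eta\bigr)\cdot\#\bigl\{\,R<q\le 2R:\ \|a_1 f(q)/f(q_1)\|\le\eta\,\bigr\},
$$
where the sub‑case $\|a_1f(q)/f(q_1)\|=0$, i.e.\ $f(q_1)\mid f(q)$, forms a thin set again contributing at most $O(Q^{k+1}\log Q)$.

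I would then bound $\#\{R<q\le 2R:\ \|a_1 f(q)/f(q_1)\|\le\eta\}$ by a standard completion step (Fourier expansion of the indicator of a short interval modulo $1$), reducing it to $\eta R$ plus $\sum_{1\le h\le \eta^{-1}}h^{-1}\bigl|\sum_{R<q\le 2R}\e\bigl(h a_1 f(q)/f(q_1)\bigr)\bigr|$. The role of $\gcd(a_1,f(q_1))=1$ is that the leading coefficient of the polynomial $h a_1 f(q)/f(q_1)$ has, in lowest terms, denominator $f(q_1)/\gcd(hc,f(q_1))\gg_f f(q_1)/h$, where $c$ is the leading coefficient of $f$; feeding this rational approximation into Weyl's inequality for a degree‑$k$ polynomial ($k-1$ differencing steps) produces a power saving governed by the exponent $1/2^{k-1}=1/\kappa_k$. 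Optimizing the dyadic parameters $R$, $\eta$ and $h$ and recombining with the $\min(N,R^k/\eta)$ weight then yields exactly the two contributions $N Q^{1-1/\kappa_k}Q^{o(1)}$ and $N^{1-1/\kappa_k}Q^{1+k/\kappa_k}Q^{o(1)}$, while the diagonal and thin‑set terms collected above supply $Q^{k+1}$.

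The main obstacle is this exponential sum estimate and, more precisely, its uniformity in all the parameters: when $R$ is small compared with $q_1$ the denominator $f(q_1)/\gcd(hc,f(q_1))$ can exceed $R^k$ and Weyl's inequality degenerates (the leading coefficient of $h a_1 f(q)/f(q_1)$ is then close to $0$). In that regime I would avoid completion and estimate $\#\{R<q\le 2R:\ \|a_1 f(q)/f(q_1)\|\le\eta\}$ directly: there $a_1 f(q)/f(q_1)$ is monotone in $q$ and confined to an interval of length $O_f(a_1 R^k/f(q_1))$, so the cardinality is $O_f\bigl((\eta f(q_1))^{1/k}\bigr)$ and $\eta$ may be taken $\gg_f a_1 R^k/f(q_1)$; together these force the corresponding contribution back into the $Q^{k+1}$ term. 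Tracking the coprimality and residue conditions and the several dyadic ranges is then a routine, if somewhat lengthy, bookkeeping exercise, so that the only genuinely arithmetic input beyond the classical large sieve inequality~\eqref{eq:classic} is Weyl's inequality.
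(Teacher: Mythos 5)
Your proposal runs on exactly the same arithmetic engine as the paper's: reduce to a spacing count for Farey fractions near a fixed $a_1/f(q_1)$, complete the short‐interval indicator via Erd\H{o}s--Tur\'an, and hit the resulting exponential sum $\sum_{q}\e(ha_1f(q)/f(q_1))$ with Weyl's inequality, the exponent $1/\kappa_k=1/2^{k-1}$ coming from the $k-1$ differencing steps. The organization is different in two places worth flagging. First, the paper replaces $\SQf$ at the outset by its dyadic truncation with $Q\le q\le 2Q$ (the full bound being an increasing power of $Q$), so that the two denominators $f(q)$ and $f(q_1)$ are always comparable in size; the near‐degenerate regime you single out (small $R$ versus large $f(q_1)$, where the leading coefficient of the Weyl phase collapses) simply never appears, and the monotonicity fallback you sketch for it is both unnecessary and, as written, not quite correctly quantified (the count of $q\in(R,2R]$ with $f(q)$ in an interval of length $L$ is $\ll L/R^{k-1}+1$, not $L^{1/k}$). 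Second, the step you wave through as ``optimizing and recombining'' is in the paper a genuine piece of work: Lemma~\ref{lem:Polynomial Map} applies H\"older over the Erd\H{o}s--Tur\'an frequency $h$, groups the terms of the Weyl differenced sum by the product $z=k!\,h\,\ell_1\cdots\ell_{k-1}$, invokes the divisor bound, and uses periodicity of $\langle a_0z/m_0\rangle$ --- this is exactly the ``different technique'' (borrowed from~\cite{CCGHSZ}) the authors mean when distinguishing their complete proof from Zhao's brief sketch, and a naive $\sum_h h^{-1}|S_h|$ estimate after Weyl does not obviously produce the stated exponents. So the plan is on the right track and conceptually identical to the paper's, but the two points above are where the actual proof lives.
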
   

 \subsection{Comparison with previous results} 
 
 As already mentioned, Theorem~\ref{thm:energy} has no predecessors, hence we only discuss 
  Theorems~\ref{thm:k>4} and~\ref{thm:f}. 
 
 To simplify the exposition, here we assume that all implied constants are absolute, while elsewhere in the paper
 they can depend on $k$.
 
The bound of Theorem~\ref{thm:k>4} improves upon~\eqref{Marc} when 
$$ N\geq Q^{\gamma_k},
$$ 
for some $\gamma_k$ with 
 $\gamma_k = 2k-3 + O\(k^{-1/2}\)$ as $k\to \infty$.

Let us remark that the bound~\eqref{Marc} improves~\eqref{BaierZhao} in the range 
$$ Q^{k} \leq N \leq Q^{\lambda_k}$$ and improves~\eqref{Karin2k} in the range
$$ Q^{k+1+ 2/(k-1)}\leq N \leq Q^{\mu_k}.$$ 
for some $\lambda_k$ and $\mu_k$ with 
$$
\lambda _k =  2k-2+O(k^{-1})\mand \mu_k   = 2k-1+O(k^{-3})
$$
as $k \to \infty$.
Our bound is therefore superior to all previous bounds in the range 
\begin{equation}\label{eq:win} 
 Q^{\sigma_k} \leq N \leq Q^{\tau_k}
\end{equation} 
for some $\sigma_k$ and $\tau_k$ with 
$$
\sigma_k = 2k-3+O(k^{-1/2}) \mand \tau_k =  2k-2+O(k^{-1}),
$$ 
as $k \to \infty$.
In particular, direct calculations show that for $k \ge 7$ we have 
$\sigma_k < \tau_k$ and hence the range~\eqref{eq:win} is not empty
(unfortunately for $k=5, 6$  we have $\sigma_k \ge \tau_k$ and thus
the range~\eqref{eq:win} is void). 

We remark that after tedious but elementary calculations, one can easily get explicit expressions 
for $\lambda_k$, $\mu_k$, $\sigma_k$ and $\tau_k$.

 \subsection{Applications to primes in progressions with  Piatetski-Shapiro moduli}\label{PSmoduli}
 Let us fix some $\alpha > 1$ and for a real $R \ge 1$ we consider the set
\begin{equation}
 \label{eq:Small SaR}
 \cS_\alpha(R) =\{ \fl{j^\alpha}:~j \in \N\} \cap [R, 2R].
\end{equation} 
We further set 
$$
M_\alpha(x;R) = \sum_{q \in  \cS_\alpha(R)} \max_{\gcd(a,q)=1} |E(x,q,a)|
$$
with
$$
E(x,q,a) = \sum_{\substack{n \le x\\n \equiv a \pmod q}} \Lambda(n) - \frac{x}{\varphi(q)} , 
$$
where $\varphi(q)$ is the Euler function and $\Lambda(n)$ is the  {\it von Mangoldt function}:
$$
\Lambda(n) =
\begin{cases}
\log p&\quad\text{if $n$ is a power of the prime $p$,}\\
 0 &\quad\text{otherwise.}
\end{cases}
$$

In the above notation we have the following version of the {\it Bombieri--Vinogradov theorem\/} for 
Piatetski-Shapiro moduli, which we derive combining the ideas and results of~\cite{Bak2,Bak3} 
with Theorem~\ref{thm:energy}.

 \begin{theorem}\label{thm:BV4PS}
  For any fixed  $\alpha$ with $1 < \alpha < 9/4$ and $A > 0$, we have  
$$
M_\alpha(x;R) \le \frac{ \#   \cS_\alpha(R) x}{R \cL^A}, 
$$
where $\cL = \log x$ provided that $R = x^\vartheta$ with some fixed $\vartheta < \varPhi(\alpha)$ and $x$ is large enough,  where 
$$
\varPhi(\alpha) = 
\begin{cases}
3\alpha/(10 \alpha - 4), & \text{for}\ 1 < \alpha < 26/23,\\
 13/28, & \text{for}\ 26/23   \le  \alpha < 2,\\
 13\alpha/(34 \alpha - 12), & \text{for}\ 2\le  \alpha <  23/11,\\
7\alpha/(20 \alpha - 10), & \text{for}\  23/11\le   \alpha < 9/4. \\
\end{cases} 
 $$
\end{theorem}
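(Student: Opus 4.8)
The plan is to reduce the Bombieri--Vinogradov estimate for Piatetski--Shapiro moduli to a combination of a classical zero-density / Vaughan-identity decomposition together with a large sieve input coming from Corollary~\ref{cor:n-alpha}. First I would split $M_\alpha(x;R)$ according to the standard dyadic treatment: apply Vaughan's identity to $\Lambda(n)$ to write $E(x,q,a)$ as a bounded number of bilinear (``type II'') sums $\sum_{u\sim U}\sum_{v\sim V}\beta_u\gamma_v$ with $UV\asymp x$, plus ``type I'' sums where one factor is smooth; the type I part is handled by the Polya--Vinogradov / elementary divisor bound together with the trivial bound $\#\cS_\alpha(R)\asymp R^{1/\alpha}$, and contributes acceptably as long as $R$ is below a suitable power of $x$. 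The main content is the type II sums, where one uses a dispersion-style argument: square out, open the resulting exponential sums, and arrive at a point where one must bound $\sum_{q\in\cS_\alpha(R)}\sum_{\gcd(a,q)=1}^{q}\bigl|\sum_{n} c_n \e(an/q)\bigr|^2$ for suitable coefficient sequences supported on an interval of length $\asymp x/\min\{U,V\}$. This is precisely $\fS(\ba,\bmm;M,N,Q)$ with $\bmm$ the Piatetski--Shapiro sequence, $Q\asymp R^{1/\alpha}$, $N\asymp x/\min\{U,V\}$, so Corollary~\ref{cor:n-alpha} applies (after checking the hypothesis $Q^\alpha\le N\le Q^{2\alpha}$, i.e.\ $R\le N\le R^2$, which holds in the relevant ranges of $U,V$ after an initial case split handling the extreme ranges by the trivial bound~\eqref{eq:trivialdelta}).

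Second, I would carry out the bookkeeping that turns the large sieve bound into a saving of $\cL^{-A}$. After inserting Corollary~\ref{cor:n-alpha} — which gives $\fS(\ba,\bmm;M,N,Q)\le N^{3/4}(Q^{(1+\alpha)/2}+Q^{1+\alpha/4})Q^{o(1)}\|\ba\|^2$ — one uses Cauchy--Schwarz over $q\in\cS_\alpha(R)$ to pass from the $L^2$-average to $M_\alpha(x;R)=\sum_q\max_a|E|$, picking up a factor $(\#\cS_\alpha(R))^{1/2}\asymp R^{1/(2\alpha)}$. One then needs $\|\ba\|^2$ and the number-of-divisors weights $\beta_u,\gamma_v$ to be controlled by $x^{1+o(1)}$ via Cauchy--Schwarz and the standard mean value $\sum_{n\le x} d(n)^2 \ll x\cL^3$; the target $\#\cS_\alpha(R)x/(R\cL^A)$ translates, after dividing through by $\#\cS_\alpha(R)$, into requiring the total contribution to be $\le x/(R\cL^A)\cdot\#\cS_\alpha(R) = x R^{1/\alpha-1}\cL^{-A}$, and one checks this holds for $R=x^\vartheta$ with $\vartheta<\varPhi(\alpha)$. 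The four-branch definition of $\varPhi(\alpha)$ comes from optimising between: the type I contribution, the two terms $Q^{(1+\alpha)/2}$ and $Q^{1+\alpha/4}$ in Corollary~\ref{cor:n-alpha}, and the range restriction $N\le Q^{2\alpha}$ forcing the Vaughan decomposition cutoffs $U,V$ to respect $x/\min\{U,V\}\le R^2$; the breakpoints $26/23$, $2$, $23/11$ are exactly where the binding constraint switches, and the $13/28$ plateau reflects the regime where the restriction $\alpha<9/4$ and the $Q^{1+\alpha/4}$ term dominate jointly. For $2\le\alpha<9/4$ one also invokes the extra savings available when $\fl{j^\alpha}$ has more than one element below a cutoff, but the cleanest route is simply to track the two terms of Corollary~\ref{cor:n-alpha} separately and take the minimum.

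The step I expect to be the main obstacle is the type II analysis: getting the exponential-sum manipulation to land cleanly on $\fS(\ba,\bmm;M,N,Q)$ rather than on some twisted variant. The subtlety is that the Piatetski--Shapiro moduli $q=\fl{j^\alpha}$ are not multiplicative, so one cannot use the reciprocity/Chinese-remainder factorisations that simplify the square-modulus case; instead one must follow the approach of~\cite{Bak2,Bak3}, which treats the moduli as an essentially arbitrary sparse set and therefore already produces sums of exactly the shape $\fS(\ba,\bmm;M,N,Q)$. A secondary technical point is verifying that the coefficient vectors arising after opening the type II sum genuinely have $\ell^2$-norm $x^{1/2+o(1)}$ uniformly — this requires the divisor-bound input and a short argument that the length of the inner sum is the correct $N$; and one must confirm that the initial reduction to ``$N$ in the critical range $Q^\alpha\le N\le Q^{2\alpha}$'' can be arranged, handling the complementary ranges by the trivial estimate~\eqref{eq:trivialdelta} with $k=\alpha$ without losing the final exponent. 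Once these are in place, the optimisation producing $\varPhi(\alpha)$ is elementary but lengthy, and I would present it as a case-by-case verification matching each branch to its binding constraint.
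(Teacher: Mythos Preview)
Your high-level plan---Vaughan's identity followed by a large sieve for Piatetski--Shapiro moduli---is the right skeleton, but two essential mechanisms are missing, and the argument would not close as written.

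First, the passage from $\max_{\gcd(a,q)=1}|E(x,q,a)|$ to a large sieve sum does not proceed by ``dispersion-style squaring''; it goes through Dirichlet characters. Writing $E(x,q,a)=\varphi(q)^{-1}\sum_{\chi\ne\chi_0}\bar\chi(a)\psi(x,\chi)$ and bounding the maximum over $a$ by the $\ell^1$-sum over $\chi$, each $\chi\pmod q$ is induced by a primitive character modulo $q/t$ for some $t\mid q$. After dyadically splitting on the conductor $C(\chi)\sim x^\lambda$, the large sieve must be applied not to $\{\lfloor j^\alpha\rfloor\}$ but to the quotient sequences $\cS_{\alpha,t}(R)=\{\lfloor j^\alpha\rfloor/t:\ t\mid\lfloor j^\alpha\rfloor,\ \lfloor j^\alpha\rfloor\sim R\}$, summed over $t\sim Rx^{-\lambda}$. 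Corollary~\ref{cor:n-alpha} does not apply to these; one needs separate control of their cardinality (Lemma~\ref{lem:Card StR}) and their additive energy (Theorem~\ref{thm:energy} with~\eqref{eq:E bound 1} and~\eqref{eq:E bound 2}), followed by Gallagher's inequality to pass from characters back to exponential sums. The conductor parameter $\lambda$ is not incidental bookkeeping: the entire optimisation producing $\varPhi(\alpha)$ is over $\lambda$, not over the two terms of Corollary~\ref{cor:n-alpha}.

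Second, the small-conductor range $\lambda\le\min\{9/20,\tfrac56(1-\vartheta)\}-\varepsilon$ requires a completely different input: an exceptional-set/zero-density argument together with mean and large value estimates for Dirichlet polynomials (Lemmas~\ref{lem:letqlex}--\ref{lem:qlex lge 1}), assembled via Riesz means into the key Lemma~\ref{lem:letM1dotsM11}. The large sieve handles only the complementary large-$\lambda$ range, and the breakpoints $26/23$, $2$, $23/11$ in $\varPhi(\alpha)$ are precisely where the large-$\lambda$ bounds~\eqref{eq: Bound T 1}--\eqref{eq: Bound T 3} meet the small-$\lambda$ threshold from Lemma~\ref{lem:letM1dotsM11}---not where the two terms of Corollary~\ref{cor:n-alpha} cross. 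Your ``type I handled elementarily'' is also too quick: the $a_2$ term in Vaughan's identity needs a further Heath--Brown decomposition and the full Dirichlet-polynomial machinery, while the $a_4$ term relies on the substantial argument of~\cite[Section~6]{Bak3}.
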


Let $\mathsf{PS}_\alpha(n)$ be the largest divisor of   $n\in \N$ of the form $\fl{j^\alpha}$, 
$j \in \N$.  Repeating the argument of Baier and Zhao~\cite[Section~8]{BaZha3}, we immediately obtain. 

\begin{cor}\label{cor:PS Div}
Under the conditions of Theorem~\ref{thm:BV4PS}, for any  fixed $\vartheta < \varPhi(\alpha)$ there are infinitely many primes $p$ 
with 
$$
\mathsf{PS}_\alpha(p-1) \ge p^\vartheta.
$$
\end{cor}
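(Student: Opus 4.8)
The plan is to deduce the infinitude of primes $p$ with $\mathsf{PS}_\alpha(p-1) \ge p^\vartheta$ from the Bombieri--Vinogradov estimate of Theorem~\ref{thm:BV4PS} by a dyadic-counting argument, exactly along the lines of~\cite[Section~8]{BaZha3}. Fix $\vartheta < \varPhi(\alpha)$ and choose $\vartheta'$ with $\vartheta < \vartheta' < \varPhi(\alpha)$. For a large $x$, set $R = x^{\vartheta'}$ and consider the set $\cS_\alpha(R)$ from~\eqref{eq:Small SaR}. For each modulus $q \in \cS_\alpha(R)$, any prime $p \le x$ with $p \equiv 1 \pmod q$ contributes a divisor $q = \fl{j^\alpha} \ge R \ge x^{\vartheta} \ge p^{\vartheta}$ (for $x$ large, since $p$ can be taken in a dyadic range $x/2 < p \le x$), hence such a prime automatically satisfies $\mathsf{PS}_\alpha(p-1) \ge p^\vartheta$. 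So it suffices to show that
$$
\sum_{q \in \cS_\alpha(R)} \#\{ p \le x:~ p \equiv 1 \pmod q\} \to \infty \qquad (x \to \infty).
$$

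First I would write the left-hand side, after multiplying through by $\log$-weights, in terms of $\psi(x;q,1) = \sum_{n \le x,\ n \equiv 1 (q)} \Lambda(n)$; dropping to primes and the dyadic range $(x/2,x]$ only loses constant and $\cL^{O(1)}$ factors. By the definition of $E(x,q,1)$ we have $\psi(x;q,1) = x/\varphi(q) + E(x,q,1)$, so
$$
\sum_{q \in \cS_\alpha(R)} \psi(x;q,1) \;=\; x \sum_{q \in \cS_\alpha(R)} \frac{1}{\varphi(q)} \;+\; O\!\left( M_\alpha(x;R) \right).
$$
For the main term, $\varphi(q) \le q \le 2R$ gives $\sum_{q \in \cS_\alpha(R)} 1/\varphi(q) \ge (2R)^{-1}\#\cS_\alpha(R)$, so the main term is $\gg x \#\cS_\alpha(R)/R$. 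Here $\#\cS_\alpha(R) \asymp R^{1/\alpha}$ since $\fl{j^\alpha}$ runs over $[R,2R]$ for $j$ in an interval of length $\asymp R^{1/\alpha}$; thus the main term is $\gg x R^{1/\alpha - 1} = x^{1 + \vartheta'(1/\alpha - 1)}$, which tends to infinity provided $\vartheta'(1 - 1/\alpha) < 1$, i.e.\ for all $\vartheta' < 1$ — automatic since $\varPhi(\alpha) < 1$ in the stated range.

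For the error term, Theorem~\ref{thm:BV4PS} applies with the same $R = x^{\vartheta'}$ (since $\vartheta' < \varPhi(\alpha)$) and any $A > 0$, giving $M_\alpha(x;R) \le \#\cS_\alpha(R)\, x /(R\cL^A)$, which is smaller than the main term by a factor $\cL^{-A}$; taking $A = 1$ suffices to absorb it. Subtracting, $\sum_{q \in \cS_\alpha(R)} \psi(x;q,1) \gg x\#\cS_\alpha(R)/R \to \infty$, and after removing prime powers and the $\log$-weights (routine, costing only $O(\sqrt x \log x)$ and a factor $\cL$) we conclude that the number of primes $p \le x$ with $p \equiv 1 \pmod q$ for some $q \in \cS_\alpha(R)$ tends to infinity; each such prime has $\mathsf{PS}_\alpha(p-1) \ge q \ge p^\vartheta$. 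Letting $x \to \infty$ yields infinitely many such primes. The only mildly delicate point is bookkeeping the passage between $p \le x$ and the dyadic window needed to guarantee $R \ge p^{\vartheta}$ from $R = x^{\vartheta'}$ with $\vartheta' > \vartheta$; this is where the strict inequality $\vartheta < \varPhi(\alpha)$ is used, and it is handled exactly as in~\cite[Section~8]{BaZha3}, so I would simply cite that argument rather than repeat it.
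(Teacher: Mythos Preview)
Your proposal is correct and follows essentially the same route as the paper: both reduce to the Baier--Zhao argument from~\cite[Section~8]{BaZha3}, using Theorem~\ref{thm:BV4PS} to control the error $\sum_{q\in\cS_\alpha(R)}|E(x,q,1)|$ and a trivial lower bound for the main term. Your bound $\sum_{q\in\cS_\alpha(R)}1/\varphi(q)\ge(2R)^{-1}\#\cS_\alpha(R)$ is the same in substance as the paper's $\sum_{y\le j\le 2y}1/\fl{j^\alpha}\ge 2^{-\alpha}y^{1-\alpha}$; the only cosmetic difference is that the paper indexes by $j$ rather than by $q$. One small wording point: the sentence ``the number of primes $p\le x$ with $p\equiv 1\pmod q$ for some $q\in\cS_\alpha(R)$ tends to infinity'' does not literally follow from $\sum_q\psi(x;q,1)\to\infty$ without a divisor-function bound on multiplicities, but since you only need the existence of one such prime in each dyadic window $(x/2,x]$ (which you note), this is harmless.
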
 
The proof follows the same steps as the proof of~\cite[Theorem~5]{BaZha3} for $p-1$ 
in~\cite[Section~8]{BaZha3} with the only difference that instead of using the asymptotic 
formula 
$$
\sum_{y \le j \le 2y} \frac{1}{\varphi(j^2)} = \frac{3} {\pi^2y} 
+ O\(y^{-2} \log y\),
$$
we use the trivial lower bound
$$
\sum_{y \le j \le 2y} \frac{1}{\fl{j^\alpha}} \ge 
\sum_{y \le j \le 2y} \frac{1}{j^\alpha} \ge 2^{-\alpha} y^{1-\alpha} . 
$$

We note that~\cite[Theorem~1.3]{Bak3} gives the bound of Theorem~\ref{thm:BV4PS} for any $\alpha$, provided that  $R \le x^{9/20- \varepsilon}$ with  any fixed $\varepsilon>0$. Thus the novelty of Theorem~\ref{thm:BV4PS} comes from the
inequality $\varPhi(\alpha) > 9/20$ for  $1 < \alpha < 9/4$, improving the previous level of distribution from~\cite[Theorem~1.3]{Bak3}.

Furthermore, based on the above  we can always assume that 
\begin{equation}\label{eq:large R} 
x^{9/20- \varepsilon}\le R \leq x^{1/2-\varepsilon}
\end{equation} 
for some sufficiently small $\varepsilon>0$.  

\section{Preparations}

\subsection{Notation and conventions}
Throughout the paper, the notation $U = O(V)$, 
$U \ll V$ and $ V\gg U$  are equivalent to $|U|\leqslant c V$ for some positive constant $c$, 
which depends on the degree $k$ and, where applies, on the coefficients of the polynomial $f$
and the real parameters $\alpha$, $\varepsilon$ and $A$ (in the proof of  Theorem~\ref{thm:BV4PS}). 

 We also define $U \asymp V$ as an equivalent $U \ll V \ll U$. 

For any quantity $V> 1$ we write $U = V^{o(1)}$ (as $V \to \infty$) to indicate a function of $V$ which 
satisfies $ V^{-\varepsilon} \le |U| \le V^\varepsilon$ for any $\varepsilon> 0$, provided $V$ is large enough. One additional advantage 
of using $V^{o(1)}$ is that it absorbs $\log V$ and other similar quantities without changing  the whole 
expression.  

We also write $u \sim U$ means $U/2 < u \le U$. 

As we have mentioned, we always assume that $f(q) > 0$ for every positive integer $q$.

 \subsection{Number of solutions to some asymmetric Diophantine equations}

For integers $k,U\geq 1$, we introduce the set of powers 
$$
\cS_{U,k}=\{u^k:~1\leq u\leq U \}.
$$
For any integer $h$, we seek a bound on $\Esf^+_{h}(\cS_{U,k})$ which  improves the essentially trivial estimate 
\begin{equation}
\label{eq: Energy-Triv} 
\Esf^+_{h}(\cS_{U,k})\le  \Esf^+(\cS_{U,k}) \le U^{2+o(1)}.
 \end{equation}

It has been shown in~\cite{CKMS} that a result of Marmon~\cite[Theorem~1.4]{Marm}
implies the following estimate. 

\begin{lemma}
\label{lem:Morm}
For a fixed  $k \ge 2$ and uniformly over $h \ne 0$ we have 
$$
\Esf^+_{h}(\cS_{U,k})\le U^{1+2/k^{1/2}+o(1)} .  
$$
\end{lemma}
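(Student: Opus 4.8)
\textbf{Plan of proof of Lemma~\ref{lem:Morm}.}
The plan is to reduce $\Esf^+_h(\cS_{U,k})$ to the number of integral points on the affine surface
$x^k - y^k = z^k - w^k + h$ with $1 \le x,y,z,w \le U$, and then to feed this into a counting
result for hypersurfaces. First I would rewrite the defining equation $s_1 + t_1 = s_2 + t_2 + h$
with $s_i = u_i^k$, $t_i = v_i^k$, which after a trivial relabelling is the equation
$u_1^k + v_1^k - u_2^k - v_2^k = h$. Since $h \ne 0$, the surface is genuinely "off-diagonal":
the cheap contribution coming from $\{u_1^k, v_1^k\} = \{u_2^k, v_2^k\}$ as sets is absent (that
would force $h = 0$). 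This is the structural point that makes the asymmetric energy smaller than
the symmetric one and is exactly why the hypothesis $h \ne 0$ appears.

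Next I would invoke Marmon's theorem~\cite[Theorem~1.4]{Marm} (via its consequence recorded
in~\cite{CKMS}), which bounds the number of solutions of diagonal equations of the shape
$a_1 x_1^k + \dots + a_s x_s^k = 0$ (or with an inhomogeneous term) in a box of size $U$ by
roughly $U^{s-2+2/k^{1/2}+o(1)}$ once $s$ is at least some explicit value — here $s = 4$ suffices
for $k$ large, and for the remaining small $k$ one checks the bound directly or notes it is implied
by the trivial estimate \eqref{eq: Energy-Triv}. Applying this with $s = 4$, $a = (1,1,-1,-1)$ and
right-hand side $h$, and tracking that the bound is uniform in the nonzero coefficient $h$
(Marmon's argument is uniform in the $a_i$, hence in the inhomogeneity after the usual clearing),
gives
$$
\Esf^+_h(\cS_{U,k}) \le U^{4-2+2/k^{1/2}+o(1)} = U^{1+2/k^{1/2}+o(1)},
$$
which is the claimed estimate.

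The main obstacle — and the one I would spend the most care on — is the uniformity in $h$: one
must make sure that Marmon's bound does not degrade as $|h|$ grows (note $|h|$ can be as large as
$\asymp U^k$), i.e. that the implied $U^{o(1)}$ and the exponent $2/k^{1/2}$ are genuinely
independent of $h$. This is handled in~\cite{CKMS}; in the writeup I would simply cite that
deduction rather than reproduce Marmon's circle-method (or determinant-method) argument. A minor
secondary point is the range of $k$ for which the exponent $1 + 2/k^{1/2}$ actually beats the
trivial $U^{2+o(1)}$ from \eqref{eq: Energy-Triv}: this needs $2/k^{1/2} < 1$, i.e. $k > 4$, but
for $k = 2,3,4$ the lemma is still literally true because then $1 + 2/k^{1/2} \ge 2$ and the
statement is weaker than \eqref{eq: Energy-Triv}, so no separate argument is needed — the lemma is
only \emph{useful} for $k \ge 5$, which is consistent with the hypothesis $k \ge 5$ in
Theorem~\ref{thm:k>4}.
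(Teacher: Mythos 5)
Your plan is, in substance, exactly what the paper does. The paper's entire proof of Lemma~\ref{lem:Morm} is the sentence preceding it, citing~\cite{CKMS}, which in turn reduces the count to the four-variable diagonal equation $u_1^k+v_1^k-u_2^k-v_2^k=h$ and invokes Marmon~\cite[Theorem~1.4]{Marm}. Your reduction, the observation that $h\ne 0$ removes the diagonal contribution, the remark on uniformity in $h$, and the observation that for $k\le 4$ the claim is subsumed by the trivial bound~\eqref{eq: Energy-Triv} are all the same points the paper (implicitly, via~\cite{CKMS}) relies on.

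There is, however, a mis-statement in the way you record Marmon's theorem that you should fix. You quote the bound as roughly $U^{s-2+2/k^{1/2}+o(1)}$ for $s$ variables and then write $U^{4-2+2/k^{1/2}+o(1)}=U^{1+2/k^{1/2}+o(1)}$. But $4-2=2$, so your formula at $s=4$ actually gives $U^{2+2/\sqrt{k}+o(1)}$, which is a full power of $U$ weaker than the lemma and no better than~\eqref{eq: Energy-Triv}; the arithmetic slip $4-2\mapsto 1$ happens to compensate for a bound that is stated one power too high. There is no general $U^{s-2+2/\sqrt{k}}$ formula at work: Marmon's Theorem~1.4 is specific to the four-variable equation and gives $O_{k,\varepsilon}(U^{1+2/\sqrt{k}+\varepsilon})$ directly when $h\ne 0$ — the extra saved power of $U$ relative to the symmetric ($h=0$) case is precisely the effect of killing the diagonal that you describe correctly in words but then drop in the exponent. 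Since you intend simply to cite the deduction in~\cite{CKMS}, the final proof would be fine, but the intermediate description of what Marmon proves needs to be corrected to avoid the impression that a wrong bound magically yields the right exponent.
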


Lemma~\ref{lem:Morm} gives a nontrivial bound when $k \ge 5$. Unfortunately we do not have a nontrivial 
bound for $k=4$. However, it is also shown in~\cite{CKMS} that the classical argument of Hooley~\cite{Hool} gives a nontrivial bound for $k=3$. We do not state it precisely because it does not imply a large sieve bound superior to the ones recalled in Section~\ref{previous}. 

 \subsection{Distribution of fractional parts and exponential sums} 
 
The following result is well-known and can be found, for example, in~\cite[Chapter~1, Theorem~1]{Mont2}
(which is a  more precise form of the celebrated Erd\"os--Tur\'{a}n inequality).

\begin{lemma}
\label{lem:ET small int}
Let $\gamma_1, \ldots, \gamma_U$ be a sequence of $U$ points of the unit interval $[0,1]$.
Then for any integer $H\ge 1$, and an interval $[\alpha, \beta] \subseteq [0,1]$,
we have
 \begin{align*}
\# \{u =1, \ldots, U:&~\gamma_u  \in [\alpha, \beta]\} - U(\beta - \alpha)\\
\ll \frac{U}{H} + &\sum_{h=1}^H \(\frac{1}{H} +\min\{\beta - \alpha, 1/h\}\)
\left|\sum_{u=1}^U \e\(h\gamma_u\)\right|.
\end{align*}
\end{lemma}

To use Lemma~\ref{lem:ET small int} we also need an estimate on exponential sums
with polynomials,  which is essentially due to Weyl, see~\cite[Proposition~8.2]{IwKow}.

\begin{lemma}
\label{lem:Weyl}
Let $F(X) \in \R[X]$ be a polynomial of degree $k\ge 2$ with the leading coefficient
$\vartheta \ne 0$.
Then
\begin{align*}
 \sum_{u=1}^U & \e\(F(u)\) \\
&  \ll U^{1-k/2^{k-1}}  \(\sum_{-U < \ell_1 , \ldots,  \ell_{k-1}  < U}
\min\{U, \langle \vartheta k!  \ell_1   \ldots   \ell_{k-1}\rangle^{-1}\}\)^{1/2^{k-1}},
\end{align*} where, as before,  $\langle \xi \rangle = \min\{|\xi - k|~:~k\in \Z\}$ denotes the distance between
a real $\xi$ and the closest integer.

\end{lemma}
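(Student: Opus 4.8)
\textbf{Proof plan for Lemma~\ref{lem:Weyl} (Weyl's inequality).}
The plan is to prove the estimate by the classical Weyl differencing procedure, iterated $k-1$ times, followed by a final application of the Cauchy--Schwarz inequality to collapse the iterated sum into the single product appearing on the right-hand side. Write $S = \sum_{u=1}^U \e(F(u))$. The starting point is the identity
\[
|S|^2 = \sum_{u_1,u_2=1}^{U} \e\(F(u_1) - F(u_2)\)
= \sum_{|\ell_1| < U} \sum_{u} \e\(F(u+\ell_1) - F(u)\),
\]
where the inner sum over $u$ ranges over those integers in $\{1,\dots,U\}$ for which $u+\ell_1$ also lies in $\{1,\dots,U\}$. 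The key observation is that $F(u+\ell_1) - F(u)$ is a polynomial in $u$ of degree $k-1$, whose leading coefficient is $\vartheta k \ell_1$. Repeating the same squaring-and-differencing step a further $k-2$ times (each time applying Cauchy--Schwarz in the form $|T|^{2} \le U \cdot (\text{number of summands}) \cdot \sum |\cdots|^{2}$ to account for the shrinking range of summation, or more simply using the bound $|\sum_{j} z_j|^{2^{r}} \le (\#\text{terms})^{2^{r}-1}\sum_j |z_j|^{2^{r}}$ applied once at the end) reduces the degree by one each time, so that after $k-1$ differences the innermost summand is $\e$ of a \emph{linear} polynomial in $u$, namely of the form $\e(\vartheta k! \,\ell_1\cdots\ell_{k-1}\, u + c)$ for some constant $c = c(\ell_1,\dots,\ell_{k-1})$ depending on the shifts but not on $u$.

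After the $(k-1)$-fold differencing one arrives at a bound of the shape
\[
|S|^{2^{k-1}} \ll U^{2^{k-1} - k}
\sum_{|\ell_1| < U}\cdots\sum_{|\ell_{k-1}| < U}
\left| \sum_{u \in I(\bm{\ell})} \e\(\vartheta k!\, \ell_1\cdots\ell_{k-1}\, u\)\right|,
\]
where $I(\bm{\ell}) \subseteq \{1,\dots,U\}$ is an interval depending on the shift vector $\bm{\ell} = (\ell_1,\dots,\ell_{k-1})$; the exponent $U^{2^{k-1}-k}$ is bookkeeping from the $k-1$ applications of Cauchy--Schwarz, each of which introduces a factor that is a power of the length of the summation range, which is $\le U$. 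The innermost sum is a geometric progression, so by the standard bound $\left|\sum_{u \in I} \e(\beta u)\right| \le \min\{|I|, \|\beta\|^{-1}\} \le \min\{U, \langle \beta\rangle^{-1}\}$ with $\beta = \vartheta k!\, \ell_1\cdots\ell_{k-1}$, we obtain
\[
|S|^{2^{k-1}} \ll U^{2^{k-1}-k}
\sum_{-U < \ell_1,\dots,\ell_{k-1} < U}
\min\{U, \langle \vartheta k!\, \ell_1 \cdots \ell_{k-1}\rangle^{-1}\}.
\]
Taking $2^{k-1}$-th roots yields exactly the claimed inequality, since $(U^{2^{k-1}-k})^{1/2^{k-1}} = U^{1 - k/2^{k-1}}$.

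The step I expect to require the most care is the bookkeeping of ranges and leading coefficients through the iterated differencing: one must track that the range of the surviving variable $u$ stays an interval inside $[1,U]$ after each shift (so that each Cauchy--Schwarz step costs only a factor $\le U$, not something larger), and that the leading coefficient of the polynomial in $u$ picks up exactly a factor of $k, k-1, \dots, 2$ at the successive stages, producing $k!$ in the final linear coefficient. There is also a minor subtlety in that the constraints $1 \le u + \ell_1 + \cdots \le U$ at the various stages must be handled by extending sums to the full range $-U < \ell_i < U$ and absorbing the truncation into the interval $I(\bm{\ell})$; this is harmless because we only use the crude bound $\min\{U, \langle\cdot\rangle^{-1}\}$ for the innermost sum, which is valid for any subinterval of $[1,U]$. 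None of this is difficult, but it is the place where an error in the exponent of $U$ or a spurious factorial would slip in, so it warrants explicit verification. The whole argument is standard and can be found in essentially this form in~\cite[Proposition~8.2]{IwKow}, which we will cite for the details that we choose not to reproduce in full.
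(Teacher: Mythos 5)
Your argument is correct and is exactly the classical Weyl differencing proof underlying the reference the paper itself cites for this lemma (the paper gives no proof, only the citation to~\cite[Proposition~8.2]{IwKow}); the bookkeeping of the exponent $2^{k-1}-k$ and the factorial $k!$ in the leading coefficient both check out. No gap.
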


\subsection{Distribution in boxes and additive energy}
 Let us take a sequence of integers $\bmm=\{m_j\}$ satisfying~\eqref{eq:mj growth}.
  
 For integers $a$, $m$ with $\gcd(a,m)=1$ and $U, V\ge 1$,  we denote by $T_{a}(m;\bmm,U,V)$  the number of solutions to the 
congruence
\begin{equation}
\label{eq: conggen} 
a m_j  \equiv v \pmod m, \qquad 1\le j \le U, \ |v| \le V.
 \end{equation}
 
 We now relate the congruence~\eqref{eq: conggen} to the additive energy of the sequence $\bmm$.

\begin{lemma}\label{lem:boxesgen} 
For any positive integers $U$ and $V$, uniformly over integers $a$ with  $\gcd(a,m)=1$,  we have 
$$
T_{a}(m;\bmm,U,V)   \ll  \Esf^+(\bmm_U)^{1/4}  +   \(U^{\alpha+o(1)}/m+1\)^{1/4}  V^{1/4}  \Esf^+_\star(\bmm_U)^{1/4}.
$$
 \end{lemma}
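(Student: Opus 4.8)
\textbf{Proof plan for Lemma~\ref{lem:boxesgen}.}
The plan is to count solutions $(j,v)$ to $a m_j \equiv v \pmod m$ with $1\le j \le U$, $|v|\le V$ by squaring and reducing to an additive-energy count. First I would introduce the counting function
$$
T := T_a(m;\bmm,U,V) = \sum_{1\le j\le U}\ \sum_{|v|\le V} \mathbf{1}[a m_j \equiv v \pmod m].
$$
Detecting the congruence with additive characters modulo $m$ gives
$$
T = \frac{1}{m}\sum_{b=0}^{m-1} \(\sum_{1\le j\le U} \em(a b m_j)\)\(\sum_{|v|\le V}\em(-bv)\),
$$
and the $v$-sum is a Dirichlet kernel bounded by $\min\{2V+1,\ \|b/m\|^{-1}\}$ up to absolute constants. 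The main term $b=0$ contributes $\ll U(V/m+1) \le U^{1+o(1)}(V/m+1)$ by~\eqref{eq:mj growth}; this is dominated by the right-hand side of the claimed bound in the relevant ranges, so the work is in the $b\ne 0$ part.

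For the off-diagonal part I would apply Cauchy--Schwarz in the $b$-variable, pulling out the $b$-sum against the character sum. A cleaner route, which is how bounds of this shape are usually obtained, is to bound $T^2$ directly: $T$ counts lattice points, so $T^2$ (or rather a suitable fourth moment) is controlled by the number of solutions of $a(m_{j_1}+m_{j_2}) \equiv v_1+v_2 \pmod m$ with $j_i\le U$, $|v_i|\le V$, and after one more Cauchy--Schwarz one arrives at
$$
T^4 \ll \#\{(j_1,j_2,j_3,j_4)\in[1,U]^4,\ |w|\le 4V:\ m_{j_1}+m_{j_2} \equiv m_{j_3}+m_{j_4} + w \pmod m\}\cdot (\text{small factor}).
$$
The congruence $m_{j_1}+m_{j_2}-m_{j_3}-m_{j_4}\equiv w \pmod m$ splits according to whether the integer $D := m_{j_1}+m_{j_2}-m_{j_3}-m_{j_4}$ equals $w$ outright (the ``genuine'' energy terms) or differs from $w$ by a nonzero multiple of $m$. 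Since $|m_j|\le U^{\alpha+o(1)}$, we have $|D|\le U^{\alpha+o(1)}$, so the number of admissible nonzero multiples $\ell m$ of $m$ with $|D - w|=|\ell m|$ is $\ll U^{\alpha+o(1)}/m + 1$. The $w=D$ case contributes $\ll V\,\Esf^+(\bmm_U)$ after summing over $w$ in a window of length $\ll V$ (each quadruple is counted for $\ll 1$ values of $w$ once $D$ is fixed, but ranging over $w$ gives the factor $V$ — more precisely one gets $\ll V \Esf^+(\bmm_U)$ from this branch, and the diagonal also yields a term $\ll \Esf^+(\bmm_U)$ which is absorbed), and the $w\ne D$ branch contributes $\ll V\,(U^{\alpha+o(1)}/m+1)\,\Esf^+_\star(\bmm_U)$, since fixing the nonzero shift $\ell m$ and summing over the $\ll V$ possible values of $w$ leaves exactly $\Esf^+_{\ell m}(\bmm_U) \le \Esf^+_\star(\bmm_U)$ for each. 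Taking fourth roots of $T^4 \ll V\,\Esf^+(\bmm_U) + V(U^{\alpha+o(1)}/m+1)\Esf^+_\star(\bmm_U)$ and folding the harmless $V^{1/4}$ and $Q^{o(1)}$ factors appropriately (note $\Esf^+(\bmm_U)\ge U$ trivially, so a bare $V^{1/4}$ on the first term is absorbed into $U^{o(1)}$ in the target ranges) yields the stated estimate
$$
T \ll \Esf^+(\bmm_U)^{1/4} + \(U^{\alpha+o(1)}/m+1\)^{1/4} V^{1/4}\,\Esf^+_\star(\bmm_U)^{1/4}.
$$

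The step I expect to be the main obstacle is the bookkeeping in separating the ``$w=D$'' and ``$w\ne D$'' branches so that the symmetric energy $\Esf^+$ appears \emph{without} the extra $(U^{\alpha}/m+1)^{1/4}V^{1/4}$ factor while the asymmetric energy $\Esf^+_\star$ carries that factor: one must be careful that in the $w=D$ branch the shift is forced to be zero (so no factor $U^\alpha/m$ enters), and that the sum over $w$ is organized to produce only a single power of $V$ rather than $V^2$, which requires handling the Dirichlet-kernel weights $\min\{V,\|b/m\|^{-1}\}$ rather than crude $\le 2V+1$ bounds at the points where it matters. Getting the exponents to land exactly as claimed, and checking that the $b=0$ term and the various lower-order pieces are genuinely absorbed, is routine but needs care.
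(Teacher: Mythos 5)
Your overall strategy --- raise $T$ to the fourth power, pass to a shifted additive-energy count over quadruples, and split into zero-shift versus nonzero-shift contributions --- is the same as the paper's. But the bookkeeping that you yourself flag as ``the main obstacle'' is exactly where the argument breaks, and the patch you propose does not close the gap. You end with
$$
T^4 \ll V\,\Esf^+(\bmm_U) + V\(U^{\alpha+o(1)}/m+1\)\Esf^+_\star(\bmm_U),
$$
that is, with a spurious factor $V$ on the symmetric-energy term, and then suggest that the resulting $V^{1/4}$ can be absorbed as ``$U^{o(1)}$''. That is false: when this lemma feeds into Lemma~\ref{lem:fracgen} one takes $V\asymp Q^{2\alpha+o(1)}/N$, which is as large as $Q^{\alpha+o(1)}$ at the bottom of the range~\eqref{eq:crit range}, so the extra $V^{1/4}$ is a genuine power of $Q$ and would destroy Theorem~\ref{thm:energy}.

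The missing observation is that the \emph{symmetric} energy $\Esf^+$ arises from exactly one shift, namely $h=0$. In your ``$w=D$'' (no-wraparound) branch, for each $w$ the count of quadruples with $m_{j_1}+m_{j_2}-m_{j_3}-m_{j_4}=w$ is $\Esf^+_w(\bmm_U)$; this equals $\Esf^+(\bmm_U)$ only when $w=0$, while each of the $O(V)$ values $w\neq0$ contributes at most $\Esf^+_\star(\bmm_U)$ and can be lumped with the wraparound branch. This yields $T^4\ll \Esf^+(\bmm_U)+V\(U^{\alpha+o(1)}/m+1\)\Esf^+_\star(\bmm_U)$, with no $V$ on the first term, and the lemma follows upon taking fourth roots. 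The paper organizes this cleanly by writing every admissible shift as $h=y+mz$ with $y$ ranging over a set $\cY$ of $O(V)$ residues and $z$ a bounded integer, noting that $(y,z)\mapsto y+mz$ is injective so that $h=0$ corresponds to exactly the pair $(0,0)$, and bounding each of the remaining $O\(V(U^{\alpha+o(1)}/m+1)\)$ nonzero shifts by $\Esf^+_\star(\bmm_U)$; your Fourier/Dirichlet-kernel preamble and the unspecified ``small factor'' from Cauchy--Schwarz are both unnecessary once one goes directly to $T^4\le\sum_{y\in\cY}W(y)$.
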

 
\begin{proof} Observe that  
$$ 
T_{a}(m;\bmm,U,V)^4\le W, 
$$ 
 where $W$ is the number of solutions to the congruence
 \begin{align*}
 m_{j_1} + m_{j_2} -  m_{j_3} - m_{j_4}  &\equiv a^{-1} w \pmod m, \\
 1\le j_1, j_2, j_3, j_4 &\le U, \quad |w| \le 4V.
 \end{align*}
 Hence there is  a set  $\cY \subseteq  \{0, \ldots, m-1\}$ of cardinality $\# \cY = O(V)$, such that 
 \begin{equation}
\label{eq: T4} 
T_{a}(m;\bmm,U,V)^4  \le  \sum_{y \in \cY} W(y) , 
 \end{equation}
  where $W(y)$ is the number of solutions to the congruence
\begin{equation}
\begin{split} 
\label{eq: cong4}
 m_{j_1} + m_{j_2} -  m_{j_3} - m_{j_4} & \equiv  y \pmod m, \\
 1 \le j_1, j_2, j_3, j_4  \le U,  
 \end{split} 
 \end{equation} 
 with a fixed $y$.

 Clearly~\eqref{eq: cong4} implies that 
 $$
   m_{j_1} + m_{j_2} -  m_{j_3} - m_{j_4} = y  + m z
 $$
 for some integer $z = O\(U^{\alpha+o(1)}/m+1\)$. 
 The contribution from the pairs $(y,z) = (0,0)$ is obviously given by $ \Esf^+(\bmm_U)$.
 For other 
$$
O\(\#\cY \(U^{\alpha+o(1)}/m+1\)\)= O\(V\(U^{\alpha+o(1)}/m+1\)\)
$$ 
admissible pairs we remark that~\eqref{eq: T4}  implies 
 $$
T_{a}(m;\bmm,U,V)^4 \ll \Esf^+(\bmm_U) +  V\(U^{\alpha+o(1)}/m+1\) \Esf^+_{\star}(\bmm_U).
$$
The result now follows. 
 \end{proof} 
 
 We remark that for  Lemma~\ref{lem:boxesgen} only the inequality 
 $m_j \le  j^{\alpha +o(1)}$ matters, however for our main results we need the 
 full power of~\eqref{eq:mj growth}.

\subsection{Polynomial values in small boxes}

For integers $a$, $m$ with $\gcd(a,m)=1$ and $U, V\ge 1$ and a polynomial $f(T) \in \Z[T]$,  we denote by $\widetilde T_{a,f}(m;U,V)$  the number of solutions to the 
congruence
\begin{equation}
\label{eq: poly cong} 
a f(u)  \equiv v \pmod m, \qquad 1\le u \le U, \ |v| \le V.
 \end{equation} 

We also  prove a new estimate on $\widetilde T_{a,f}(m;U,V)$ which uses the ideas 
of the proof of~\cite[Theorem~5]{CCGHSZ}, see also~\cite{KerrMoh} for yet another 
approach in the case of prime moduli $m$. We now recall the definition~\eqref{eq:kappa}.

\begin{lemma}
\label{lem:Polynomial Map} Let $f(T) \in \Z[T]$ be of degree $k \ge 2$. 
For  any positive integers $U$ and $V$, 
uniformly over integers $a$ with  $\gcd(a,m)=1$, 
we have 
$$
\widetilde T_{a,f}(m;U,V)\ll \frac{UV}{m} + U^{1-1/\kappa_k}m^{o(1)}
+  U^{1-k/\kappa_k} V^{1/\kappa_k}m^{o(1)}.
$$
\end{lemma}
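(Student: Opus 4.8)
The plan is to count solutions to $af(u)\equiv v\pmod m$ with $1\le u\le U$, $|v|\le V$ by a standard additive-character completion followed by an appeal to Weyl differencing (i.e. the Vinogradov mean value theorem with exponent $\kappa_k=2^{k-1}$). First I would fix a residue class: writing $N(b)$ for the number of $u\in[1,U]$ with $af(u)\equiv b\pmod m$, the quantity $\widetilde T_{a,f}(m;U,V)$ equals $\sum_{|v|\le V}N(v\bmod m)$. Detecting the congruence $af(u)\equiv b$ with the orthogonality relation $\frac1m\sum_{c\bmod m}\e_m(c(af(u)-b))$ and summing over $|v|\le V$, one gets
$$
\widetilde T_{a,f}(m;U,V)=\frac1m\sum_{c=0}^{m-1}\Biggl(\sum_{u=1}^U\e_m(caf(u))\Biggr)\Biggl(\sum_{|v|\le V}\e_m(-cv)\Biggr).
$$
The $c=0$ term gives the main term $U(2V+1)/m$. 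For $c\ne0$ the inner $v$-sum is a geometric progression bounded by $\min\{V,\|c/m\|^{-1}\}$, so the task reduces to bounding $\sum_{c\ne0}\min\{V,\|c/m\|^{-1}\}\,|S(c)|$ where $S(c)=\sum_{u\le U}\e_m(caf(u))$.

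Next I would bring in the mean value theorem. Rather than estimate each $|S(c)|$ pointwise via Lemma~\ref{lem:Weyl} (which would be lossy), I would raise to the power $2\kappa_k$ and exploit the near-uniformity in $c$: by Hölder's inequality the sum over $c$ is controlled by $\bigl(\sum_{c\ne0}\min\{V,\|c/m\|^{-1}\}\bigr)^{1-1/(2\kappa_k)}$ times $\bigl(\sum_{c\bmod m}\min\{V,\|c/m\|^{-1}\}|S(c)|^{2\kappa_k}\bigr)^{1/(2\kappa_k)}$. The first factor is $\ll (m^{o(1)}(V+m))^{1-1/(2\kappa_k)}$ by the classical divisor-type estimate $\sum_{c=1}^{m-1}\min\{V,\|c/m\|^{-1}\}\ll (V+m)m^{o(1)}$. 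For the second factor, $|S(c)|^{2\kappa_k}=|S(c)|^{2^k}$ expands, after Weyl differencing $k-1$ times (the leading coefficient of $af$ being a unit times $a$, coprime to the relevant modulus), into a count of the system $af'$-type linear relations among products $\ell_1\cdots\ell_{k-1}$; summing this against $\sum_c\min\{V,\|c/m\|^{-1}\}\e_m(c\cdot(\text{integer}))$ and using orthogonality again, the $c$-sum is $\ll V$ when the integer is $\equiv0\pmod m$ and contributes a bounded amount otherwise. One is left with essentially $U^{2^{k-1}}m^{o(1)}$ from the "diagonal" contribution and $U^{2^{k-1}-k+1}Vm^{o(1)}$ from the off-diagonal, i.e. $\sum_c\min\{\dots\}|S(c)|^{2\kappa_k}\ll (U^{2\kappa_k}+U^{2\kappa_k-k}V)m^{o(1)}$; here I would follow the bookkeeping of~\cite[Theorem~5]{CCGHSZ}. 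Reassembling the two Hölder factors and simplifying the exponents gives precisely
$$
\widetilde T_{a,f}(m;U,V)\ll\frac{UV}{m}+U^{1-1/\kappa_k}m^{o(1)}+U^{1-k/\kappa_k}V^{1/\kappa_k}m^{o(1)}.
$$

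The main obstacle I anticipate is the Weyl-differencing bookkeeping for $|S(c)|^{2\kappa_k}$ summed against the weight $\min\{V,\|c/m\|^{-1}\}$: one must track how the $k-1$ differencing variables $\ell_1,\dots,\ell_{k-1}$ interact with the modulus $m$ (which need not be prime, so $k!$ and the leading coefficient may share factors with $m$), count how often the resulting linear form in the $\ell_i$ lands in a fixed residue class mod $m$, and verify that the exceptional contribution is genuinely of lower order — all while keeping every estimate uniform in $a$ and absorbing the various divisor functions into $m^{o(1)}$. This is exactly the technical heart of~\cite{CCGHSZ}; the rest of the argument is routine completion and Hölder.
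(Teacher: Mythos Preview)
Your overall shape — detect the congruence with additive characters, isolate a main term, then control the oscillatory part by H\"older combined with Weyl differencing — is the right instinct, but the central step does not go through as written. After H\"older you need to bound
\[
\sum_{c\bmod m}\min\{V,\|c/m\|^{-1}\}\,|S(c)|^{2\kappa_k},
\]
and you propose to ``expand $|S(c)|^{2\kappa_k}$ after Weyl differencing $k-1$ times'' and then ``use orthogonality again''. Two things break here. First, Weyl differencing (Lemma~\ref{lem:Weyl}) is an \emph{inequality}, producing a bound of the form $U^{\kappa_k-k}\sum_{\ell_1,\dots,\ell_{k-1}}\min\{U,\langle\cdot\rangle^{-1}\}$; it is not an identity in exponentials, so there is nothing for orthogonality to act on. Second, even if you instead expand $|S(c)|^{2\kappa_k}$ honestly as a $2\kappa_k$-fold sum in $e_m(c\cdot n)$, the weight $\min\{V,\|c/m\|^{-1}\}$ is $|K(c)|$ and not the Dirichlet kernel $K(c)$ itself; the sum $\sum_c |K(c)|e_m(cn)$ has no orthogonality relation and is not $\ll V$ on any residue class. (Your stated ``diagonal $U^{2^{k-1}}$ / off-diagonal $U^{2^{k-1}-k+1}V$'' counts are also inconsistent with the moment $2\kappa_k$ you fed into H\"older, and reassembling with those exponents yields $m^{-1/(2\kappa_k)}\bigl(U+U^{1-k/(2\kappa_k)}V^{1/(2\kappa_k)}\bigr)$, not the target bound.)

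The paper avoids all of this by a different ordering. Instead of full completion over $c\bmod m$, it applies the Erd\H os--Tur\'an inequality (Lemma~\ref{lem:ET small int}) with $H=\lceil m/V\rceil$, which truncates the frequency range to $1\le h\le H$ and replaces the weight $|K(c)|$ by the constant $V/m$. It then applies the Weyl bound of Lemma~\ref{lem:Weyl} \emph{pointwise} to each $|S(h)|$, obtaining the $\min\{U,\langle a_0 k!\,h\ell_1\cdots\ell_{k-1}/m_0\rangle^{-1}\}$ expression, and only afterwards applies H\"older over the short range $h\le H$. Collecting $z=k!\,h\ell_1\cdots\ell_{k-1}$ via the divisor bound and summing $\min\{U,\langle a_0 z/m_0\rangle^{-1}\}$ by periodicity then gives the stated exponents. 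The truncation $H\asymp m/V$ is precisely the missing idea: it converts your problematic double-$\min$ over a full residue system into a single-$\min$ sum over a short range, and that is what makes the $\kappa_k$-exponents come out correctly.
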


\begin{proof}
Let $T = \widetilde T_{a,f}(m;U,V)$.   Clearly we can assume that $1\le U,V < m$, as otherwise the result is trivial 
due to $T \ll \min\{U, V\} \le UV/m$.

We interpret the congruence~\eqref{eq: poly cong} 
as a condition on fractional parts $\{af(u)/m\}$, $u =1, \ldots, U$. 

Applying Lemma~\ref{lem:ET small int} to the sequence
of fractional parts $\{af(u)/m\}$, $u =1, \ldots, U$,
with
$$\alpha = 0, \qquad \beta =  V/m, \qquad H = \rf{m/V},
$$
so that we have
$$
\frac{1}{H} +\min\{\beta - \alpha, 1/h\} \ll \frac{V}{m}, 
$$
for $h =1, \ldots, H$, we derive
\begin{equation}
\label{eq:T Weyl}
T \ll \frac{UV}{m} +\frac{V}{m}\sum_{h=1}^H
\left|\sum_{u=1}^U \e( ahf(u)/m)\right|.
 \end{equation} 
Therefore, by Lemma~\ref{lem:Weyl}, we have
 \begin{align*}
T \ll \frac{UV}{m} &+  \frac{ U^{1-k/\kappa_k}V}{m}   \\
  \times &\sum_{h=1}^H
\(\sum_{-U < \ell_1 , \ldots,  \ell_{k-1}  < U}
\min\left\{U, \left \langle \frac{ab}{m}k! h \ell_1   \ldots
 \ell_{k-1}\right\rangle^{-1}\right\}\)^{1/\kappa_k},
\end{align*}
where $b$ is the leading coefficient of $f$. We remove the common divisors introducing 
$$
a_0 = ab/\gcd(m,b) \mand  m_0 = m/\gcd(m,b)
$$
and rewrite the last bound as 
$$
T \ll \frac{UV}{m}  +  \frac{ U^{1-k/\kappa_k}V}{m}  \sum_{h=1}^H W(h)^{1/\kappa_k},
 $$
 where
$$
W(h) = \sum_{-U < \ell_1 , \ldots,  \ell_{k-1}  < U}
\min\left\{U, \left \langle \frac{a_0}{m_0}k! h \ell_1   \ldots
 \ell_{k-1}\right\rangle^{-1}\right\}.
$$

We observe that since $f$ is a fixed polynomial, we have 
$$
m_0 \asymp m.
$$

First we estimate the contribution  from the terms with 
$$ \ell_1   \ldots \ell_{k-1} \equiv 0 \pmod {m_0}.
$$ 
Clearly the product  $z = \ell_1   \ldots \ell_{k-1}$ can take at most $U^{k-1}/m_0$ values.
If $z=0$ then we have at most $(k-1)(2U)^{k-2}$ possibilities for $\(\ell_1,  \ldots, \ell_{k-1}\)$.
For any other $z\ne 0$,  from the well known bound on the divisor function,  see~\cite[Equation~(1.81)]{IwKow},
we obtain $U^{o(1)}$ possibilities for $\(\ell_1,  \ldots, \ell_{k-1}\)$.
Hence in total we have at most 
$$
(k-1)(2U)^{k-2} + U^{k-1+o(1)}/m_0 \ll U^{k-2} + U^{k-1+o(1)}/m = U^{k-2+o(1)}
$$
choices, and each of them  contributes $U$ to the sum over $\ell_1,  \ldots, \ell_{k-1}$.   

Thus we obtain
$$
T \ll\frac{UV}{m}+   \frac{ U^{1-k/\kappa_k}V}{m} H(U^{k-1+o(1)})^{1/\kappa_k}
+  \frac{VU^{1-k/\kappa_k}}{m} W ,
$$
where
$$
W = \sum_{h=1}^H W_0(h)^{1/\kappa_k}
$$
with 
$$ W_0(h) = 
\sum_{\substack{ 0 < |\ell_1|, \ldots,  |\ell_{k-1}| < U\\ \ell_1   \ldots \ell_{k-1} \not \equiv 0 \pmod {m_0}}}
\min\left\{U, \left \langle \frac{a_0}{m_0}k! h \ell_1   \ldots
 \ell_{k-1}\right\rangle^{-1}\right\}.
$$

Recalling the choice of $H$, we derive
\begin{equation}
 \label{eq:T and W}
T \ll \frac{UV}{m}+     U^{1-1/\kappa_k} m^{o(1)} +  \frac{U^{1-k/\kappa_k}V}{m}  W.
\end{equation}
The H\"{o}lder inequality implies the bound
 \begin{align*}
W^{\kappa_k} & \ll H^{\kappa_k-1}  \sum_{h=1}^H W_0(h)\\
&\ll  H^{\kappa_k-1}  \sum_{h=1}^H \sum_{\substack{ 0 < |\ell_1|, \ldots,  |\ell_{k-1}| < U\\ \ell_1   \ldots \ell_{k-1} \not \equiv 0 \pmod {m_0}}}
\min\left\{U, \left \langle \frac{a_0}{m_0}k! h \ell_1   \ldots
 \ell_{k-1}\right\rangle^{-1}\right\}.
\end{align*}
Collecting together the terms with the same value of
$$
z = k! h \ell_1   \ldots  \ell_{k-1} \not \equiv 0 \pmod {m_0}
$$
and recalling the   bound on the divisor function again, 
we conclude that
$$
W^{\kappa_k} \ll H^{\kappa_k-1} m^{o(1)}
\sum_{\substack{|z| < k! HU^{k-1}\\ z  \not \equiv 0 \pmod {m_0}}}
\min\left\{U, \left \langle \frac{a_0}{m_0}z \right\rangle^{-1}\right\}.
$$
Since the sequence $ \langle a_0z/m_0 \rangle$ is periodic with
period $m_0$ and   $HU^{k-1} \ge HU \ge m_0$, using that $\gcd(a_0,m_0) = 1$ we derive
 \begin{align*}
W^{\kappa_k} & \ll H^{\kappa_k-1} m^{o(1)} \( \frac{HU^{k-1}}{m_0}+1\)
\sum_{z=1}^{m_0-1}  \left \langle \frac{a_0}{m_0}z \right\rangle^{-1}\\
& = H^{\kappa_k-1} m^{o(1)} \( \frac{HU^{k-1}}{m_0}+1\)
\sum_{z=1}^{m_0-1}   \left \langle \frac{z}{m_0} \right\rangle^{-1} \\
& \le H^{\kappa_k} U^{k-1}m^{o(1)} +  H^{\kappa_k-1} m^{1+o(1)}   .
 \end{align*}
Thus, recalling the choice of $H$, we derive
 \begin{align*}
W & \le H  U^{(k-1)/\kappa_k}m^{o(1)} + H^{1-1/\kappa_k} m^{1/\kappa_k+o(1)} \\
& =   U^{(k-1)/\kappa_k} V^{-1}m^{1+o(1)}+ V^{-1+1/\kappa_k}m^{1+o(1)},
 \end{align*}
which after the substitution in~\eqref{eq:T and W} concludes
the proof.
\end{proof}

We remark that Halupczok~\cite{preKarin} gives a different bound on the sum 
on the right hand side of~\eqref{eq:T Weyl}, based on 
the optimal form of the Vinogradov mean value 
theorem~\cite{Bourgainvino,Wooley,Wooley2}.
These bounds are used to derive~\eqref{Karin2k}, which in fact 
applies  to any polynomial $f$ of degree $k$, see~\cite[Section~6]{preKarin}.

 \subsection{Distribution of Farey fractions}  
Let us take a sequence of moduli $\bmm= \{m_j\}$ satisfying~\eqref{eq:mj growth}. We introduce a subset of Farey fractions
$$
\cS(\bmm;Q)=\left\{ \frac{a}{m_j}:~\gcd (a,m_j) =1,\ 1\leq a < m_j,\ Q \leq j \leq 2Q\right\}.
$$
It is easy to remark that two distinct elements of $\cS(\bmm;Q)$ are $1/Q^{2\alpha}$ spaced. Following a classical approach (see for instance~\cite{Zhaoacta}), we measure the spacings between these Farey fractions by the quantity
$$
M(\bmm;N,Q)= \max_{x \in \cS(\bmm;Q)}\# \left\{y \in \cS(\bmm;Q):~\langle x-y\rangle <\frac{1}{2N} \right\}.
$$
As noticed in~\cite{Zhaoacta}, any good estimate on this quantity leads to an inequality of type~\eqref{eq:largesieve}. We prove the following bound.

\begin{lemma}\label{lem:fracgen} For any  integers $N$ and $Q$ with~\eqref{eq:crit range}, we have 
$$
M(\bmm;N,Q) \le \Esf^+(\bmm_Q)^{1/4}  +    N^{-1/4+o(1)} Q^{\alpha/2}  \Esf^+_{\star}(\bmm_Q)^{1/4}.
$$
\end{lemma}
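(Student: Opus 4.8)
\textbf{Proof plan for Lemma~\ref{lem:fracgen}.}
The plan is to bound $M(\bmm;N,Q)$ by reducing the problem of counting close Farey fractions $a/m_j$ to a congruence of the type handled in Lemma~\ref{lem:boxesgen}. First I would fix the maximizing fraction $x = a/m_j \in \cS(\bmm;Q)$ and consider all $y = b/m_i \in \cS(\bmm;Q)$ with $\langle x - y\rangle < 1/(2N)$. Writing $x - y = a/m_j - b/m_i$, the condition $\langle x-y \rangle < 1/(2N)$ means there is an integer $c$ with $|a m_i - b m_j - c\, m_i m_j| < m_i m_j/(2N)$. Since $m_i, m_j \asymp Q^{\alpha+o(1)}$ by \eqref{eq:mj growth}, this says $a m_i \equiv b m_j + O(m_i m_j / N) \pmod{\text{something}}$ — more precisely, multiplying through one finds $a m_i \equiv v \pmod{m_j}$ for some $v$ with $|v| \le V$ where $V \asymp Q^{2\alpha}/N + 1 = Q^{2\alpha+o(1)}/N$ (using the lower range $N \le Q^{2\alpha}$ from \eqref{eq:crit range}, so $V \ge 1$). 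Here $a$ is fixed and coprime to $m_j$, so this is exactly the congruence \eqref{eq: conggen} counted by $T_a(m_j;\bmm,U,V)$ with $U = 2Q$ (allowing for the range $Q \le i \le 2Q$) and $m = m_j \asymp Q^{\alpha}$.

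Next I would apply Lemma~\ref{lem:boxesgen} with these parameters. The first term gives $\Esf^+(\bmm_{2Q})^{1/4} = \Esf^+(\bmm_Q)^{1/4} Q^{o(1)}$ (the truncation at $2Q$ versus $Q$ costs only $Q^{o(1)}$, or one can simply absorb it). For the second term, $\big(U^{\alpha+o(1)}/m + 1\big)^{1/4} V^{1/4} \Esf^+_\star(\bmm_{2Q})^{1/4}$: since $U^\alpha \asymp Q^\alpha \asymp m$, the factor $U^{\alpha+o(1)}/m + 1$ is $Q^{o(1)}$, and $V^{1/4} \asymp (Q^{2\alpha}/N)^{1/4} Q^{o(1)} = N^{-1/4} Q^{\alpha/2} Q^{o(1)}$. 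Combining, the second term becomes $N^{-1/4+o(1)} Q^{\alpha/2} \Esf^+_\star(\bmm_Q)^{1/4}$, which matches the claimed bound. I should also check the small issue that each pair $(i, b)$ might correspond to more than one admissible $(j', v)$, but the number of integers $c$ in play is $O(1)$ since $|x - y| < 1$, so this only affects implied constants.

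\textbf{Main obstacle.} The delicate point is the bookkeeping in passing from $\langle a/m_j - b/m_i \rangle < 1/(2N)$ to a clean congruence $a m_i \equiv v \pmod{m_j}$ with $|v| \le V$ and the correct size of $V$. One must be careful that it is the \emph{fixed} modulus $m_j$ (coming from the maximizing $x$) against which the congruence is read — so that $a$ is fixed and coprime to the modulus as Lemma~\ref{lem:boxesgen} requires — while $i$ (hence $m_i$) ranges over $[Q, 2Q]$; this asymmetry is exactly why Lemma~\ref{lem:boxesgen} is phrased with a single fixed $a$ and $m$. A secondary technicality is ensuring $V \ge 1$ so that the lemma is non-trivial, which is where the hypothesis $N \le Q^{2\alpha}$ in \eqref{eq:crit range} enters; the other inequality $Q^\alpha \le N$ is what makes $V \le Q^{\alpha+o(1)}$, keeping the bound meaningful. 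Everything else is substitution of the parameters $U \asymp Q$, $m \asymp Q^\alpha$, $V \asymp Q^{2\alpha}/N$ into Lemma~\ref{lem:boxesgen} and simplification with $Q^{o(1)}$ absorbing all logarithmic and divisor-function losses.
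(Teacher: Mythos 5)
Your proof is correct and takes essentially the same route as the paper: fix the maximizing fraction, convert the condition $\langle a/m_k - b/m_j\rangle < 1/(2N)$ into a congruence $am_j \equiv z \pmod{m_k}$ with $|z| \ll Q^{2\alpha+o(1)}/N$, and apply Lemma~\ref{lem:boxesgen} with $U = 2Q$, $V \asymp Q^{2\alpha+o(1)}/N$, $m = m_k \asymp Q^\alpha$. The extra bookkeeping you do for the integer shift $c$ and for the $\bmm_{2Q}$ vs.\ $\bmm_{Q}$ truncation is sound and just makes explicit what the paper treats implicitly.
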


\begin{proof}Let $x=a /m_k$ with $\gcd(a,m_k)=1$. 
We would like to estimate the number of elements 
$y=b/m_j$
with $\gcd(b,m_j)=1$ such that
$$
\left\langle \frac{a}{m_k}-\frac{b}{m_j}\right\rangle=\frac{\vert am_j-bm_k\vert}{m_km_j} <1/(2N). 
$$
 
 We now count the number of pairs $(b,j)$ such that for $z=am_j-bm_k$ we have $|z|\le Z$ 
 for some $Z =    Q^{2\alpha +o(1)}/N$.
 This number  does not exceed the number of pairs $(j,z)$ such that 
 $$
 am_j\equiv z \pmod{m_k}, \quad j \le 2Q, \quad  |z|\le Z.
 $$ 
Applying Lemma~\ref{lem:boxesgen}  with parameters $U=2Q$, $V = Z$, and $m=m_k$ 
(thus by~\eqref{eq:mj growth} we have $U^{\alpha}  \le m^{1+o(1)}$),  we deduce 
the desired result. 
\end{proof}

Now, given a polynomial $f(T) \in \Z[T]$ we denote 
$$
\widetilde \cS_f(Q)=\left\{ \frac{a}{f(q)}:~\gcd\(a,f(q)\) =1,\ 1\leq a < f(q),\ Q \leq q \leq 2Q\right\}.
$$
and 
$$
\widetilde M_f(N,Q)= \max_{x \in \widetilde  \cS_f(Q)}\# \left\{y \in \cS_f(Q):~\langle x-y\rangle <\frac{1}{2N} \right\}.
$$

\begin{lemma}\label{lem:poly}  Let $f(T) \in \Z[T]$ be of degree $k \ge 2$. 
For any  integers $N$ and $Q$ with~\eqref{eq:crit range}, we have
$$
\widetilde  M_f(N,Q) \ll 
Q^{k+1}N^{-1}  +   Q^{1-1/\kappa_k+o(1)}
+  Q^{1+k/\kappa_k+o(1)}  N ^{-1/\kappa_k}. 
$$
\end{lemma}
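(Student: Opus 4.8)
The plan is to mimic the proof of Lemma~\ref{lem:fracgen}, replacing the appeal to Lemma~\ref{lem:boxesgen} by an appeal to Lemma~\ref{lem:Polynomial Map}. Fix $x = a/f(q)$ with $\gcd(a,f(q))=1$ and $Q \le q \le 2Q$, and count the elements $y = b/f(r)$ of $\widetilde\cS_f(Q)$ with $\langle x - y\rangle < 1/(2N)$. Writing the difference as
$$
\left\langle \frac{a}{f(q)} - \frac{b}{f(r)}\right\rangle = \frac{|a f(r) - b f(q)|}{f(q) f(r)},
$$
and using that $f(q), f(r) \asymp Q^k$ for $Q \le q, r \le 2Q$ (here $f$ has degree $k$ and positive leading coefficient), the condition $\langle x-y\rangle < 1/(2N)$ forces $|a f(r) - b f(q)| \le Z$ for some $Z = Q^{2k+o(1)}/N$. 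Hence the number of such $y$ is at most the number of pairs $(r,z)$ with
$$
a f(r) \equiv z \pmod{f(q)}, \qquad 1 \le r \le 2Q, \quad |z| \le Z,
$$
which is exactly $\widetilde T_{a,f}(f(q); 2Q, Z)$ in the notation of Lemma~\ref{lem:Polynomial Map} (with modulus $m = f(q) \asymp Q^k$).

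First I would invoke Lemma~\ref{lem:Polynomial Map} with $U = 2Q$, $V = Z = Q^{2k+o(1)}/N$, and $m = f(q) \asymp Q^k$, obtaining
$$
\widetilde M_f(N,Q) \ll \frac{UV}{m} + U^{1-1/\kappa_k} m^{o(1)} + U^{1-k/\kappa_k} V^{1/\kappa_k} m^{o(1)}.
$$
Then I would substitute the values of $U$, $V$, $m$ and simplify each of the three terms: the first becomes $Q \cdot Q^{2k}/(N Q^k) \cdot Q^{o(1)} = Q^{k+1} N^{-1} Q^{o(1)}$; the second becomes $Q^{1-1/\kappa_k + o(1)}$ (absorbing $m^{o(1)} = Q^{o(1)}$); and the third becomes $Q^{1 - k/\kappa_k} \cdot (Q^{2k}/N)^{1/\kappa_k} \cdot Q^{o(1)} = Q^{1 - k/\kappa_k + 2k/\kappa_k} N^{-1/\kappa_k} Q^{o(1)} = Q^{1 + k/\kappa_k + o(1)} N^{-1/\kappa_k}$. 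This matches the three terms in the stated bound exactly.

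There is essentially no serious obstacle here, as the lemma is a direct translation of Lemma~\ref{lem:Polynomial Map} into the Farey-fraction language via a completely standard spacing argument. The only minor points needing care are: (i) checking that $f(q) \asymp Q^k$ uniformly for $q \sim Q$, which is immediate since $f$ is a fixed polynomial of degree $k$ with positive leading coefficient and we may assume $Q$ large; (ii) verifying the hypotheses of Lemma~\ref{lem:Polynomial Map} are met — the condition $\gcd(a, f(q)) = 1$ holds by the definition of $\widetilde\cS_f(Q)$, and the lemma holds for all positive $U, V$; and (iii) confirming that dropping the coprimality restriction on $b$ (and the range $1 \le b < f(r)$) only enlarges the count, so the upper bound is safe. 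One should also note that the case $y = x$ itself contributes $1$, which is harmlessly absorbed. The range hypothesis~\eqref{eq:crit range}, i.e. $Q^k \le N \le Q^{2k}$, guarantees $1 \le Z < m$ is in the meaningful regime, though Lemma~\ref{lem:Polynomial Map} is in fact stated without such a restriction, so even this is not strictly needed for the inequality to hold.
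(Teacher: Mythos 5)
Your proposal is correct and follows essentially the same route as the paper: the paper's proof consists of the one-line remark that $f(q)\asymp q^k$, then ``proceed as in the proof of Lemma~\ref{lem:fracgen} using Lemma~\ref{lem:Polynomial Map} instead of Lemma~\ref{lem:boxesgen},'' and you have filled in exactly those steps with the right choices $U=2Q$, $V=Z$, $m=f(q)\asymp Q^k$. One tiny cosmetic point: since $f$ is a fixed polynomial you have $f(q)f(r)\asymp Q^{2k}$ with no $o(1)$, so you should take $Z\ll Q^{2k}/N$ rather than $Z=Q^{2k+o(1)}/N$ (the $o(1)$ there is inherited from the growth hypothesis~\eqref{eq:mj growth} in Lemma~\ref{lem:fracgen}, but it is not needed for polynomial moduli); this is what makes the first term in the statement come out as the clean $Q^{k+1}N^{-1}$ rather than $Q^{k+1+o(1)}N^{-1}$.
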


\begin{proof} We first observe 
$$
f(q) \asymp q^k. 
$$
Then proceeding as in the proof of Lemma~\ref{lem:fracgen} and 
using Lemma~\ref{lem:Polynomial Map} instead of Lemma~\ref{lem:boxesgen} 
we obtain the desired result. 
\end{proof}

\section{Proofs of large sieve bounds}

\subsection{Proof of Theorem~\ref{thm:energy}}

Clearly it is enough to consider only a version of $ \SQm$ with summation of $j$  over a dyadic interval, that is,
$$\TQm =  \sum_{j =Q}^{2Q} \sum_{\substack{ a=1 \\ \gcd(a,m_j)=1}}^{q_j } \left\vert \sum_{n=M+1}^{M+N} a_n \e\left(\frac{a}{m_j}n\right)\right\vert^2 .
 $$

 We proceed similarly as in the proof of~\cite[Theorem~1.2]{Mun} and arrive at 
$$ \TQm \ll M(\bmm;N,Q)N \sum_{n=M+1}^{M+N} \vert a_n\vert^2.$$ The result follows then directly using 
Lemma~\ref{lem:fracgen}.

\subsection{Proof of Theorem~\ref{thm:k>4}}
This is direct from Theorem~\ref{thm:energy},  used together 
with~\eqref{eq: Energy-Triv} and Lemma~\ref{lem:Morm}.

\subsection{Proof of Theorem~\ref{thm:f}}
Again, we proceed similarly as in the proof of~\cite[Theorem~1.2]{Mun} and  arrive at 
$$
\SQf \ll  \widetilde M_k(N,Q) N \sum_{n=M+1}^{M+N} \vert a_n\vert^2.
$$  
We now apply Lemma~\ref{lem:poly} and derive the desired result. 

\section{Some large sieve estimates for Piatetski-Shapiro  sequences}
\label{sec:LS PS}
\subsection{Preliminaries} 
In order to use Theorem~\ref{thm:energy} in the proof of Theorem~\ref{thm:BV4PS}
we need to give a bound on the additive energy $\Esf^+(t,R)$ of the sequence
$$
\cS_{\alpha, t}(R) = \left \{ \fl{j^\alpha}/t:~  \fl{j^\alpha} \sim R, \  t \mid \fl{j^\alpha}\right \}. 
$$
In fact we need it for every integer $t \in [1, R^{1/6}]$.
The reason for the appearance of $t$ in our work is that  a Dirichlet  character modulo $\fl{j^\alpha}$ 
is induced by  a {\it primitive\/}  Dirichlet  character modulo $\fl{j^\alpha}/t$. However the quantity  $\Esf^+(t,R)$
needs to be investigated only for 
\begin{equation}
 \label{eq:Small t}
t \ll R^{1/6}. 
\end{equation}

First we estimate the cardinality of  $ \cS_{\alpha, t}(R)$. 

\begin{lemma}\label{lem:Card StR}  
For $t \le R^{1/6}$ we have 
$$
\# \cS_{\alpha, t}(R)  \ll 
 \frac{R^{1/\alpha}} {t } + R^{1/2}. 
$$
\end{lemma}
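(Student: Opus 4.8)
The plan is to count, for each admissible value of $t$, the number of integers $j$ such that $\fl{j^\alpha} \sim R$ and $t \mid \fl{j^\alpha}$, and then to observe that distinct such $j$ give distinct elements of $\cS_{\alpha,t}(R)$ (since $j \mapsto \fl{j^\alpha}$ is strictly increasing for $\alpha > 1$, so the map $j \mapsto \fl{j^\alpha}/t$ is injective). Hence $\#\cS_{\alpha,t}(R)$ equals the number of $j$ with $R \ll j^\alpha \ll R$, i.e.\ $j$ lying in a dyadic-type interval $j \asymp R^{1/\alpha}$, subject to the congruence condition $\fl{j^\alpha} \equiv 0 \pmod t$.

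First I would fix the range: $\fl{j^\alpha} \sim R$ forces $j$ to lie in an interval $\cI$ of the form $(c_1 R^{1/\alpha}, c_2 R^{1/\alpha}]$ of length $\asymp R^{1/\alpha}$. Next I would estimate $\#\{j \in \cI :~ t \mid \fl{j^\alpha}\}$. The natural approach is to write $\fl{j^\alpha} = j^\alpha - \{j^\alpha\}$ and detect the divisibility condition by splitting $\cI$ into blocks of $t$ consecutive integers; on each such block the values $\fl{j^\alpha}$ run through an increasing sequence of integers, and the condition $t \mid \fl{j^\alpha}$ is satisfied for $O(1 + (\text{range of }\fl{j^\alpha}\text{ on the block})/t)$ values of $j$. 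Summing over the $\asymp R^{1/\alpha}/t + 1$ blocks gives two contributions: the ``$+1$ per block'' contributes $O(R^{1/\alpha}/t + 1)$, and the main term contributes $O(R/t^2 \cdot (t/R^{1/\alpha}) \cdot \dots)$ — one must be a little careful here, since $\fl{j^\alpha}$ increases by roughly $\alpha j^{\alpha-1} \asymp R^{1-1/\alpha}$ between consecutive $j$, so a block of $t$ integers sees $\fl{j^\alpha}$ increase by $\asymp t R^{1-1/\alpha}$, giving $\asymp R^{1-1/\alpha}$ multiples of $t$ per block when $R^{1-1/\alpha} \ge 1$, hence $\asymp (R^{1/\alpha}/t)\cdot R^{1-1/\alpha} = R/t$ in total. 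Wait — that is too large; the resolution is that one should not sum a main term per block but rather note directly that the total number of $j \in \cI$ with $t \mid \fl{j^\alpha}$ is at most the number of multiples of $t$ in the interval $[\,\fl{(c_1 R^{1/\alpha})^\alpha}, \fl{(c_2R^{1/\alpha})^\alpha}\,]$, which has length $\asymp R$, each multiple of $t$ being hit by at most one $j$ (by strict monotonicity of $j \mapsto \fl{j^\alpha}$ for $j$ large, since consecutive values differ by $\gg R^{1-1/\alpha} \ge 1$ in the relevant range). This yields the clean bound $\#\cS_{\alpha,t}(R) \ll R/t + 1$, but this is \emph{weaker} than the claimed $R^{1/\alpha}/t + R^{1/2}$; so the true argument must exploit that each multiple of $t$ in $[R,2R]$ is a value $\fl{j^\alpha}$ only for an exceptional $j$.

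The correct route, and the step I expect to be the main obstacle, is therefore to count pairs: $\#\cS_{\alpha,t}(R)$ is bounded by $\#\{j \asymp R^{1/\alpha}:~\|j^\alpha\|_t := \text{dist}(j^\alpha, t\Z) < 1\}$ roughly speaking — more precisely, $t \mid \fl{j^\alpha}$ means $j^\alpha \in [tm, tm+1)$ for some integer $m$, i.e.\ the fractional part of $j^\alpha/t$ lies in $[0, 1/t)$. So I would apply the Erd\H{o}s--Tur\'an / completion inequality of Lemma~\ref{lem:ET small int} to the sequence $\gamma_j = \{j^\alpha/t\}$, $j \in \cI$, with the interval $[\alpha,\beta] = [0, 1/t)$ and a suitable cutoff $H$, reducing matters to bounding exponential sums $\sum_{j \in \cI} \e(hj^\alpha/t)$. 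These are Weyl-type sums with a non-integer exponent $\alpha$, and standard van der Corput estimates (or the derivative bounds behind Piatetski-Shapiro's theorem) give $\sum_{j\asymp R^{1/\alpha}} \e(hj^\alpha/t) \ll$ a power saving. The ``main term'' from Lemma~\ref{lem:ET small int} is $\#\cI \cdot (1/t) \asymp R^{1/\alpha}/t$, and the error terms, optimized over $H$ (using $t \le R^{1/6}$ to control the ranges), should be absorbed into $R^{1/2}$. The delicate point will be choosing $H$ and verifying that the van der Corput bound for $\sum \e(h j^\alpha/t)$ — where the effective frequency is $h/t$ and one needs uniformity in $h \le H$ — is strong enough to give an error term that is $O(R^{1/2})$ for \emph{all} $t$ in the range $t \le R^{1/6}$; this is where the constraint $t \ll R^{1/6}$ from~\eqref{eq:Small t} is presumably used. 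Once the exponential sum bound is in hand, the rest is routine optimization.
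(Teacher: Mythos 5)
Your plan is correct and takes essentially the same route as the paper. Concretely, after rewriting the condition as $\{j^\alpha/t\} < 1/t$, the paper applies Lemma~\ref{lem:ET small int} with $H = t$ and bounds $S_h = \sum_{j\sim R^{1/\alpha}}\e(hj^\alpha/t)$ by the second-derivative van der Corput test with $\lambda = hR^{1-2/\alpha}/t$, giving $\sum_{1\le h\le t}|S_h|/h \ll R^{1/2} + t^{1/2}R^{1/\alpha - 1/2}$, where the second term is absorbed into $R^{1/\alpha}/t$ once $t \ll R^{1/3}$ (so the lemma actually holds in this wider range, $t \le R^{1/6}$ being imposed for reasons elsewhere in the argument).
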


\begin{proof} 
We note first that (arguing as in~\cite[Equation~(6.4)]{Bak3})
$$
\# \cS_{\alpha, t}(R) = \# \left \{j \in \N: ~j^\alpha \sim R, \  \left\{j^\alpha/t\right\} < 1/t\right \} + O(1). 
$$
By the Erd\"os--Tur\'{a}n inequality, see  Lemma~\ref{lem:ET small int},  we have 
\begin{equation}
 \label{eq:StR via E-T}
\# \cS_{\alpha, t}(R)  -\frac{(2R)^{1/\alpha} - R^{1/\alpha}}{t} \ll  1 + \frac{R^{1/\alpha}}{t}+     
\sum_{1 \le h \le t} \frac{1}{h}  \left| S_h\right | , 
\end{equation}
where 
$$
S_h = \sum_{j \sim R^{1/\alpha}} \e\(h j^\alpha/t\).
$$

To estimate the exponential sums $S_h$ we apply Van der Corput's inequality~\cite[Theorem~2.2]{GrKol} with 
$$
\lambda = \frac{hR^{1-2/\alpha}} {t}, 
$$
which yields
$$
S_h\ll R^{1/\alpha} \lambda^{1/2} + \lambda^{-1/2}
=  \frac{h^{1/2} R^{1/2}} {t^{1/2} } +  \frac{t^{1/2} R^{1/\alpha-1/2}} {h^{1/2} }.
$$
Hence 
$$
\sum_{1 \le h \le t} \frac{1}{h}  \left| S_h\right | \ll  R^{1/2}+ t^{1/2} R^{1/\alpha-1/2}, 
$$
which after substitution in~\eqref{eq:StR via E-T} implies
$$
\# \cS_{\alpha, t}(R)  - \frac{(2R)^{1/\alpha} - R^{1/\alpha}}{t} \ll  R^{1/2}+ t^{1/2} R^{1/\alpha-1/2} +\frac{R^{1/\alpha}}{t}.
$$
Since for $ t < R^{1/3}$ we have
$$
 t^{1/2} R^{1/\alpha-1/2}  \ll  \frac{R^{1/\alpha}}{t}
 $$
 the result  now follows. 
\end{proof} 

Using the trivial bound 
$$
 \Esf^+(t,R) \le \(\# \cS_{\alpha, t}(R)\)^3, 
 $$
we obtain from Lemma~\ref{lem:Card StR}  
\begin{equation}
 \label{eq:E bound 1}
\Esf^+(t,R) \ll  \frac{R^{3/\alpha}} {t^3} + R^{3/2}.
\end{equation}

 On the other hand, the bound~\eqref{eq:RS bound} of Robert and Sargos~\cite[Theorem~2]{RoSa} 
 implies 
\begin{equation}
 \label{eq:E bound 2}
\Esf^+(t,R) \ll  \begin{cases} 
R^{2/\alpha},& \text{if}\ \alpha> 2,\\
R^{4/\alpha-1},& \text{if}\ 2\ge  \alpha> 1.
\end{cases} 
\end{equation}

\subsection{The case $1< \alpha\le 2$} We may assume that $R> x^{9/20-\varepsilon}$ in view of~\eqref{eq:large R}.  

Denote 
$$
\sfS_t = \fS(\ba,  \cS_{\alpha, t}\(R); M,N,(2R)^{1/\alpha} - R^{1/\alpha}\).
$$
The bounds~\eqref{eq:E bound 1} and~\eqref{eq:E bound 2}, together with Theorem~\ref{thm:energy},  imply, respectively,  the following two estimates on
$\sfS_t$:
\begin{equation}
 \label{eq:St bound 1}
\sfS_t\le  \(\frac{R^{3/4\alpha}} {t^{3/4}} + R^{3/8}\) \(N + \frac{N^{3/4} R^{1/2}}{t^{1/2}}\)R^{o(1)} \|\ba \|^2 
\end{equation}
and
\begin{equation}
 \label{eq:St bound 2}
\sfS_t\le  R^{1/\alpha-1/4} \(N + \frac{N^{3/4} R^{1/2}}{t^{1/2}}\) R^{o(1)} \|\ba \|^2 .
\end{equation}
When $N\leq x^{3/5}$,  we can absorb the term $N$ into $N^{3/4} R^{1/2}/t^{1/2}$. Indeed,
$N^{1/4} < x^{3/20} \ll R^{1/2}/t^{1/2}$ since $R^{1/2}/t^{1/2} >  R^{5/12} > x^{3/16}$. Thus, we get
\begin{equation}
 \label{eq:St bound 1bis}
\sfS_t\le \(\frac{N^{3/4}R^{3/4\alpha+1/2}}{t^{5/4}} + \frac{N^{3/4}R^{7/8}}{t^{1/2}}\)R^{o(1)} \|\ba \|^2 
\end{equation}
and
\begin{equation}
 \label{eq:St bound 2bis}
\sfS_t\le  \frac{R^{1/\alpha+1/4}N^{3/4}}{t^{1/2}} R^{o(1)} \|\ba \|^2 .
\end{equation}
from~\eqref{eq:St bound 1} and~\eqref{eq:St bound 2}, respectively.

 Let $\cX_q$ denote the set of all multiplicative characters modulo $q$ and let $\cX_q^*$ be the set 
 of non-principal characters, see~\cite[Chapter~3]{IwKow} for a background. 

Let $\lambda>0$, we now use the bounds~\eqref{eq:St bound 1bis} and~\eqref{eq:St bound 2bis} to estimate the sums
\begin{equation}\label{defT} T(\bc,\lambda)= 
\sum_{q \in \cS_\alpha(R)} \, 
\sum_{\substack{ \chi \in \cX_q^* \\C(\chi) \sim x^{\lambda}}}
\left| \sum_{n \le N} c(n) \chi(n) \right |^2, 
\end{equation} 
where $c_n= 0$ for $\gcd(n,q)>1$,  the set  $\cS_\alpha(R)$ is defined by~\eqref{eq:Small SaR} and as usual $C(\chi)$ is the conductor of the Dirichlet character $\chi$. Indeed there is a primitive character to modulus $\frac{\fl{j^{\alpha}}}{t}$ such that 
$$\chi(n) = \begin{cases} \widetilde\chi(n) & \text{ if } \gcd(n,\fl{j^{\alpha}})=1, \\
0& \text{ otherwise}.
\end{cases}$$   We use Gallagher's inequality~\cite[Equation~(10), Chapter~27]{Dav}. Discarding a factor $\varphi(R)/R$, for $t\mid \fl{j^{\alpha}}$ we have
\begin{equation}\label{Gallagher}  
 \sum_{\widetilde\chi  \in \cX_{\fl{j^{\alpha}}/t}^*} \left| \sum_{n \le N} c(n)\widetilde\chi (n) \right |^2 \leq 
 \sum_{\substack{ a=1 \\\gcd(a,\fl{j^{\alpha}}/t)=1}}^{\fl{j^{\alpha}}/t}\left| \sum_{n=1}^{N}c_n \e\left(\frac{an}{\fl{j^{\alpha}}/t}\right)\right|^2.
 \end{equation}
For those $t=q/C(\chi)$ counted in $ T(\bc,\lambda)$ we have $Rx^{-\lambda} \ll t \ll Rx^{-\lambda}$. 
Noticing that $c_n\widetilde\chi(n)=c_n \chi(n)$ , we see that~\eqref{Gallagher} yields 
\begin{align*}
T(\bc,\lambda) &\leq  \sum_{t \sim Rx^{-\lambda}} \,
\sum_{\substack{\fl{j^{\alpha}} \sim R \\ t\mid \fl{j^{\alpha}}}} \, \sum_{\widetilde\chi  \in \cX_{\fl{j^{\alpha}}/t}^*}  \left| \sum_{n\leq N}c_n \widetilde\chi(n) \right|^2 \\
& \leq \sum_{t \sim Rx^{-\lambda}}\, \sum_{\substack{\fl{j^{\alpha}} \sim R \\ t\mid \fl{j^{\alpha}}}} \,  \sum_{\substack{ a=1 \\\gcd(a,\fl{j^{\alpha}}/t)=1}}^{\fl{j^{\alpha}}/t}\left| \sum_{n=1}^{N}c_n \e\left(\frac{an}{\fl{j^{\alpha}}/t}\right)\right|^2.
\end{align*} Let us assume the following condition   
\begin{equation}\label{condN} N\leq x^{3/5}. 
\end{equation}
Thus, by~\eqref{eq:St bound 1bis} and~\eqref{eq:St bound 2bis} we obtain the two estimates
\begin{equation}\label{eq: Bound T 1}  
\begin{split} 
T(\bc,\lambda) &\ll  \sum_{t \sim Rx^{-\lambda}} \left(\frac{N^{3/4}R^{3/4\alpha+1/2}}{t^{5/4}}+\frac{N^{3/4}R^{7/8}}{t^{1/2}} \right)  \|\bc \|^2 R^{o(1)}  \\
& \ll   \left(N^{3/4}R^{3/4\alpha+1/4}x^{\lambda/4}+ N^{3/4}R^{11/8}x^{-\lambda/2} \right)   \|\bc \|^2  R^{o(1)}
\end{split}
  \end{equation}  
 and 
\begin{equation}\label{eq: Bound T 2}  
\begin{split} 
T(\bc,\lambda) & \ll R^{o(1)}\sum_{t \sim Rx^{-\lambda}} \frac{N^{3/4}R^{1/\alpha+1/4}}{t^{1/2}}  \|\bc \|^2 . \\
& \ll N^{3/4}R^{1/\alpha+3/4+o(1)}x^{-\lambda/2}  \|\bc \|^2 .
 \end{split} 
\end{equation} 

\subsection{The case $ \alpha>2$} 
Here we have to adjust~\eqref{eq:St bound 2bis}   replacing $1/\alpha+1/4$ by $1/2\alpha +1/2$. Thus
\begin{equation}\label{eq: Bound T 3}  
\begin{split} 
T(\bc,\lambda) &  \ll R^{o(1)}\sum_{t \sim Rx^{-\lambda}} \frac{N^{3/4}R^{1/2\alpha+1/2}}{t^{1/2}} \|\bc \|^2 \\[2mm]
& \ll N^{3/4}R^{1/2\alpha+1+o(1)}x^{-\lambda/2} \|\bc \|^2 .
 \end{split} 
\end{equation}

\section{Proof of Theorem~\ref{thm:BV4PS}}

\subsection{Preliminaries}

In this section we assume $\varepsilon$ is sufficiently small and write $\delta = \varepsilon^2$. We write
 $$
 R = x^\vartheta
 $$
 and assume that~\eqref{eq:large R} holds. That is,
 $$
9/20- \varepsilon\le \vartheta \leq 1/2-\varepsilon.
 $$
 
Let $a^*$ (depending on $q$ and $x$) be chosen so that $\gcd(a^*,q)=1$ and
 $$
\max_{\gcd(a,q)=1}\vert E(x,q,a)\vert = |E(x,q,a^*)|.
 $$

We define an `exceptional subset' $\cE_\alpha(R)$ of $S_\alpha(R)$ below, and show that
for any $A>0$
 \begin{equation}
 \label{eq:except sub1}
\sum_{q\in \cE_\alpha(R)} |E(x,q,a^*)| \ll \frac{x\#S_\alpha(R)}{R\cL^A} 
 \end{equation}
and
 \begin{equation} \label{eq:except sub2}
\sum_{q\in S_\alpha(R)\backslash \cE_\alpha(R)} |E(x,q,a^*)| \ll \frac{x \#S_\alpha(R)}{R\cL^A}.
 \end{equation}

For~\eqref{eq:except sub1} it suffices to show that for any $A>0$
 \begin{equation}\label{eq:Ealph(R)}
\# \cE_\alpha(R) \ll \frac{\# S_\alpha(R)}{\cL^A} . 
 \end{equation}
To see this, suppose that~\eqref{eq:Ealph(R)} holds. By the 
Brun-Titchmarsh theorem (see, for example,~\cite[Theorem~6.6]{IwKow} or~\cite[Theorem~3.9]{MoVau2}), we have 
 $$
 E(x,q,a^*) \ll \frac x{\varphi(q)} \ll \frac xR\, \log\cL \quad (q \in S_\alpha(R)).
 $$
Hence we see from~\eqref{eq:Ealph(R)}  that or any $A > 0$, we have
$$
\sum_{q \in \cE_\alpha(R)} |E(x,q,a^*)|  \ll \frac xR\, \log \cL \#\, \cE_\alpha(R)\ll \frac{x\, \#\, S_\alpha(R)}{R\cL^A}\, .
$$

We define $\cE_\alpha(R)$ by assigning $q \in S_\alpha(R)$ to $\cE_\alpha(R)$ if for some Dirichlet character $x \in \cX_q^*$, the $L$-function $L(s,\chi)$ has a zero $\rho$ with
 $$
 \rho \in \left[1 - \frac \varepsilon{144},1\right) \times 
 [-2R, 2R].
 $$
By~\cite[Lemma~5.2]{Bak3}, with the choice $c_4 = \varepsilon/4$, we have the bound~\eqref{eq:Ealph(R)}.
As a consequence of this definition, we have
 \begin{equation}\label{eq:sumnleN}
\sum_{n\le N} \chi(n) n^{-\frac 12 + it} \ll (|t| + 1) N^{\frac 12} x^{- 3\delta} 
 \end{equation}
for $q \in S_\alpha(R)\backslash \cE_\alpha(R)$ and $N \ge x^{\varepsilon/2}$, for any $\chi \in \cX_q^*$. The implied constant depends only on $\alpha$ and $\varepsilon$. To obtain this we argue as in~\cite[Lemma~5]{Bak2} followed by a partial summation as in~\cite[Lemma~6]{Bak2}.

Before we begin the proof of~\eqref{eq:except sub2}, we assemble some results on mean and large values of Dirichlet polynomials.

 \begin{lemma}\label{lem:letqlex} 
Let $q \le x$. Let $a_n$,  $n\sim N$, be complex numbers and let $G = \sum\limits_{n\sim N} |a_n|^2$. We have
 \begin{equation}\label{eq:sum substacks}
\sum_{\substack{\chi \in \cX_q^*\\
C(\chi)\sim x^\lambda}} \ \Biggl|
\sum_{\substack{n \sim N\\
\gcd(n,q)=1}} a_n \chi(n)\Biggr|^2 \ll x^\delta(N + x^\lambda)G.
 \end{equation}
 \end{lemma}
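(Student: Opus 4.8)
The inequality~\eqref{eq:sum substacks} is a hybrid large sieve bound for characters of conductor in a dyadic range, restricted to a single modulus $q$. The natural approach is to reduce to the classical large sieve for Dirichlet characters. First I would note that every $\chi \in \cX_q^*$ with $C(\chi) \sim x^\lambda$ is induced by a unique primitive character $\widetilde\chi$ modulo some $d = C(\chi)$ with $d \mid q$ and $d \sim x^\lambda$; moreover, since $a_n$ is supported on $n$ coprime to $q$ (or rather we only lose a harmless factor by restricting to such $n$), we have $\chi(n) = \widetilde\chi(n)$ on the support of the inner sum. Hence
$$
\sum_{\substack{\chi \in \cX_q^*\\ C(\chi)\sim x^\lambda}} \Biggl| \sum_{\substack{n \sim N\\ \gcd(n,q)=1}} a_n \chi(n)\Biggr|^2 \le \sum_{\substack{d \mid q\\ d \sim x^\lambda}} \ \sum_{\widetilde\chi \bmod d}^{*} \Biggl| \sum_{n \sim N} a_n' \widetilde\chi(n)\Biggr|^2,
$$
where $\sum^{*}$ denotes summation over primitive characters and $a_n' = a_n$ for $\gcd(n,q)=1$ and $0$ otherwise, so that $\sum_{n\sim N}|a_n'|^2 \le G$.

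\textbf{Key steps.} Having made this reduction, the plan is to apply the classical large sieve inequality for primitive Dirichlet characters (see~\cite[Theorem~7.13]{IwKow} or~\cite[Theorem~4]{Dav}), which gives
$$
\sum_{d \le D} \ \sum_{\widetilde\chi \bmod d}^{*} \frac{d}{\varphi(d)}\Biggl| \sum_{n \sim N} a_n' \widetilde\chi(n)\Biggr|^2 \ll (D^2 + N) \sum_{n\sim N} |a_n'|^2.
$$
In our situation every $d$ in play satisfies $d \sim x^\lambda$, so $D^2 \asymp x^{2\lambda}$; however this is wasteful, since we only want $N + x^\lambda$, not $N + x^{2\lambda}$. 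The right tool is instead the observation that all the $d$'s divide the single modulus $q$, so the total number of relevant moduli $d$ is at most the divisor function $\tau(q) \ll x^{o(1)} \ll x^{\delta/2}$ (for $x$ large, since $q \le x$), and for each such $d$ one applies the large sieve (or simply the Montgomery--Vaughan bound~\eqref{eq:classic} with the $\varphi(d)$ Farey points $a/d$) to a \emph{single} modulus $d$, obtaining a contribution $\ll (d + N) G \ll (x^\lambda + N) G$. Summing over the at most $x^{\delta/2}$ values of $d \mid q$ with $d \sim x^\lambda$ gives the claimed bound $\ll x^\delta (N + x^\lambda) G$, with room to spare. The factor $\varphi(d)/d$ that one discards, exactly as in the passage around~\eqref{Gallagher}, is harmless since it is $\gg 1/\log\log x$ and can be absorbed into $x^\delta$.

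\textbf{Main obstacle.} The only genuinely delicate point is the bookkeeping of conductors versus moduli: one must be careful that restricting to $C(\chi) \sim x^\lambda$ means the \emph{inducing} modulus $d$ lies in a dyadic interval, and that the map $\chi \mapsto \widetilde\chi$ is injective on $\cX_q^*$ for fixed $q$ (which it is, since $\widetilde\chi$ together with the condition $d\mid q$ determines $\chi$ uniquely). Everything else — coprimality truncation of $a_n$, the divisor bound $\tau(q) = x^{o(1)}$ for $q \le x$, and absorbing $\varphi(d)/d$ — is routine and folded into the $x^\delta$ factor. I would therefore expect the proof to be short: state the reduction to primitive characters, invoke~\eqref{eq:classic} or the classical large sieve for each of the $O(x^{o(1)})$ moduli $d\mid q$ with $d \sim x^\lambda$, and sum.
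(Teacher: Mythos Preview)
Your proposal is correct and follows essentially the same approach as the paper: reduce to primitive characters $\widetilde\chi$ modulo the divisors $r\mid q$ with $r\sim x^\lambda$, apply the single-modulus large sieve (the paper cites Montgomery~\cite[Theorem~6.2]{Mont}, which plays exactly the role of your per-$d$ bound $\ll (d+N)G$), and then sum over the $O(\tau(q))=x^{o(1)}$ relevant divisors to absorb the divisor count into $x^\delta$. The bookkeeping you flag as the ``main obstacle'' is handled in the paper in one line, and your treatment of it is accurate.
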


 \begin{proof}
The left-hand side of~\eqref{eq:sum substacks} is bounded by
 $$
\Sigma =  \sum_{\substack{r\mid q\\
 r\sim x^\lambda}} \ \sum_{\substack{\widetilde \chi \in \cX_r^*\\
 \widetilde \chi \text{ primitive}}} \Biggl|
 \sum_{\substack{n\sim N\\
 \gcd(n,q) = 1}} a_n \widetilde \chi(n)\Biggr|^2.
 $$
This can be bounded as 
 $$
\Sigma  \ll \sum_{r\mid q} (N + x^\lambda)G \ll
 x^\delta (N + x^\lambda) G
 $$ 
by~\cite[Theorem~6.2]{Mont} and the bound on   the divisor function,  see~\cite[Equation~(1.81)]{IwKow}. 
 \end{proof}

 \begin{lemma}\label{lem:letq an} 
Let $q$, $a_n$ and $G$ be as in Lemma~\ref{lem:letqlex}. We have, for $V > 0$,
\begin{equation}
\begin{split} 
\# \Biggl\{ \chi \in \cX_q^*: ~C(\chi) \sim x^\lambda , & \ \Biggl|\sum_{\substack{n\sim N\\
\gcd(n,q)=1}} a_n \chi(n)\Biggr| > V\Biggr\}\label{eq:chi in bold X}\\[2mm]
&\ll x^{2\delta}(GNV^{-2} + x^\lambda G^3NV^{-6}). 
 \end{split}
 \end{equation}
 \end{lemma}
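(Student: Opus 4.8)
The plan is to derive~\eqref{eq:chi in bold X} from the mean value bound of Lemma~\ref{lem:letqlex} together with a standard tensor-power (duplication) trick that converts an $L^2$ estimate into a large-values estimate. First I would fix a threshold $V>0$ and let $\cW$ denote the set of characters $\chi\in\cX_q^*$ with $C(\chi)\sim x^\lambda$ and $\bigl|\sum_{n\sim N,\ \gcd(n,q)=1} a_n\chi(n)\bigr|>V$. Writing $B(\chi)=\sum_{n\sim N,\ \gcd(n,q)=1} a_n\chi(n)$, the trivial bound $\#\cW\le V^{-2}\sum_{\chi\in\cX_q^*,\ C(\chi)\sim x^\lambda}|B(\chi)|^2\ll x^\delta(N+x^\lambda)GV^{-2}$ from Lemma~\ref{lem:letqlex} already gives the first term $x^{2\delta}GNV^{-2}$ when $N\ge x^\lambda$; the work is to produce the second term, which wins when $x^\lambda$ dominates.

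The key step is the duplication argument applied to a cube of the Dirichlet polynomial. I would consider $B(\chi)^3=\sum_{m} b_m\chi(m)$, where $b_m=\sum_{n_1n_2n_3=m,\ n_i\sim N} a_{n_1}a_{n_2}a_{n_3}$ is supported on $m\sim N^3$ (more precisely $N^3/8<m\le N^3$). For each $\chi\in\cW$ we have $|B(\chi)^3|>V^3$, so applying Lemma~\ref{lem:letqlex} to the coefficient sequence $(b_m)$ in place of $(a_n)$ yields
\begin{equation*}
\#\cW\cdot V^6\le\sum_{\substack{\chi\in\cX_q^*\\C(\chi)\sim x^\lambda}}|B(\chi)|^6\ll x^\delta\bigl(N^3+x^\lambda\bigr)\sum_{m}|b_m|^2.
\end{equation*}
It then remains to bound $\sum_m|b_m|^2$. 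By Cauchy--Schwarz and the divisor bound, $|b_m|^2\le d_3(m)\sum_{n_1n_2n_3=m,\ n_i\sim N}|a_{n_1}|^2|a_{n_2}|^2|a_{n_3}|^2$, and summing over $m$ and using $\sum_{m\le M}d_3(m)^{O(1)}\ll M^{1+o(1)}$ together with $\sum_{n\sim N}|a_n|^2=G$ gives $\sum_m|b_m|^2\ll x^{o(1)}G^3$; the factor $x^{o(1)}$ is absorbed into $x^\delta$ at the cost of enlarging $\delta$, which is harmless since $\delta=\varepsilon^2$ is only constrained to be small. Combining, $\#\cW\ll x^{2\delta}(N^3+x^\lambda)G^3V^{-6}$. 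Finally I would split: if $N^3\le x^\lambda$ this is $\ll x^{2\delta}x^\lambda G^3NV^{-6}\cdot N^{-1}\cdot\ldots$ — more cleanly, one just notes $N^3 G^3 V^{-6}=(NGV^{-2})^3$ and when $NGV^{-2}\le 1$ this is $\le NGV^{-2}$, i.e. dominated by the first term, whereas in the complementary regime one keeps the bound as is; in all cases $\#\cW\ll x^{2\delta}(GNV^{-2}+x^\lambda G^3NV^{-6})$ after checking the elementary inequality $\min\{N^3G^3V^{-6},\,\#\cX_q^*\}\ll GNV^{-2}+x^\lambda G^3NV^{-6}$ using the trivial bound $\#\cW\le\#\cX_q^*\ll q\ll x$ and $x^\lambda\le q$.

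The main obstacle I anticipate is purely bookkeeping rather than conceptual: one must be careful that the coefficients $b_m$ of the cubed polynomial are supported on a dyadic-type range $m\sim N^3$ so that Lemma~\ref{lem:letqlex} applies verbatim (it is stated for $n\sim N$), and one must verify that multiplying out the cube does not destroy the coprimality condition $\gcd(n,q)=1$ — it does not, since $\gcd(n_1n_2n_3,q)=1$ iff each $\gcd(n_i,q)=1$. A secondary point is getting the exponent of $x^\delta$ to come out exactly as $2\delta$: the mean value lemma contributes one factor $x^\delta$, the divisor sum contributes $x^{o(1)}$, and these combine to at most $x^{2\delta}$ for $x$ large, which is why the statement is phrased with $x^{2\delta}$. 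With these points handled, the estimate~\eqref{eq:chi in bold X} follows.
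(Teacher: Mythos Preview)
Your approach has a genuine gap: applying the mean value estimate of Lemma~\ref{lem:letqlex} to powers of the Dirichlet polynomial does \emph{not} recover the Hal\'asz--Montgomery shape $GNV^{-2}+x^\lambda G^3NV^{-6}$. The issue is precisely the ``elementary inequality'' you defer at the end; it is false in general. Your two inputs are
\[
\#\cW \ll x^{\delta}(N+x^\lambda)GV^{-2}
\qquad\text{and}\qquad
\#\cW \ll x^{2\delta}(N^3+x^\lambda)G^3V^{-6},
\]
together with the trivial count $\#\cW\ll x^{\lambda+\delta}$. Take $N=x^{2\lambda/3}$, $a_n\equiv 1$ (so $G\asymp N$), and $V=x^{7\lambda/12}$; one checks $V^2\le NG$, so this is an admissible configuration. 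Then the first bound gives $\asymp x^{\lambda/2}$, the second also $\asymp x^{\lambda/2}$, and the trivial count $\asymp x^{\lambda}$, whereas the claimed right-hand side of~\eqref{eq:chi in bold X} is $\asymp x^{\lambda/6}$. Thus no combination of your three inputs can yield the stated bound, and the closing sentence of your argument cannot be completed. More generally, moment methods alone lose a power of $N$ in the intermediate range $x^{\lambda/2}<N<x^{\lambda}$; the extra saving in~\eqref{eq:chi in bold X} comes from the duality (Hal\'asz) method, which exploits the near-orthogonality of the characters bilinearly rather than through $L^2$-moments of $B(\chi)^k$.

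The paper proceeds differently and avoids this obstacle: it passes to primitive characters $\widetilde\chi$ modulo $r\mid q$ with $r\sim x^\lambda$ and, for each such $r$, invokes the Hal\'asz--Montgomery large-values theorem for Dirichlet characters (\cite[Theorem~9.18]{IwKow}), which already delivers the bound $GNV^{-2}+r^{1+\delta}G^3NV^{-6}$. Summing over the $O(x^{\delta})$ divisors $r\mid q$ in the dyadic range then gives~\eqref{eq:chi in bold X}. If you want a self-contained argument, you would need to reproduce the duality step (write $\#\cW\cdot V^2\le\sum_{\chi\in\cW}|B(\chi)|^2$, dualise to a sum over $n$, apply Cauchy--Schwarz, and bound the resulting character sum $\sum_{\chi,\chi'\in\cW}|\sum_n \chi\overline{\chi'}(n)|$ using the hybrid large sieve or the multiplicative large sieve constant for characters of bounded conductor); the cubing trick is not a substitute for this.
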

 
 \begin{proof}
The left-hand side of~\eqref{eq:chi in bold X} is bounded by
 $$
\Sigma = \sum_{\substack{r\mid q\\
 r\sim x^\lambda}} \#\Biggl\{\widetilde \chi \in \cX_r^* :~\Biggl|
 \sum_{\substack{N < n \le 2N\\
 \gcd(n,q)=1}} a_n \widetilde \chi(n)\Biggr| > V\Biggr\}.
 $$
This can be bounded as 
 $$
\Sigma \ll \sum_{\substack{r\mid q\\
 r\sim x^\lambda}} (GNV^{-2} + r^{1+\delta} G^3NV^{-6})\ll x^{2\delta}(GNV^{-2} + x^\lambda G^3NV^{-6})
 $$
by~\cite[Theorem~9.18]{IwKow} and the bound on   the divisor function,  see~\cite[Equation~(1.81)]{IwKow}. 
 \end{proof}
 
 \begin{lemma}\label{lem:qlex lge 1}
For $q \le x$, $L \ge 1$, $t \in \R$ we have
$$
\#\Biggl\{\chi \in \cX_q^* :~C(\chi) \sim x^\lambda, \  \Biggl|\sum_{\ell \le L} \chi(\ell)\ell^{-\frac 12 - it}\Biggr| \ge U\Biggr\} \ll x^{\lambda + 6\delta}|s|^{1+\delta} U^{-4}.
$$
 \end{lemma}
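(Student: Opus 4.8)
The plan is to reduce the fourth-moment count over characters to a short-sum large-value estimate of the type already recorded in Lemma~\ref{lem:letq an}. First I would observe that $\Bigl|\sum_{\ell\le L}\chi(\ell)\ell^{-1/2-it}\Bigr|\ge U$ forces the same inequality with the twisted coefficients $a_\ell=\ell^{-1/2-it}\mathbf{1}_{\ell\le L}$ (restricted to $\gcd(\ell,q)=1$ up to a harmless error, since a character mod $q$ only sees $\gcd(\ell,q)=1$ anyway). The relevant mean square is $G=\sum_{\ell\le L}\ell^{-1}\ll \log L\ll x^{\delta}$, and the summation length is $N\asymp L$ in the notation of Lemma~\ref{lem:letq an}.

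Next I would feed $a_\ell$, $G\ll x^{\delta}$ and $N\asymp L$ into Lemma~\ref{lem:letq an} with threshold $V=U$, obtaining
$$
\#\{\chi\}\ll x^{2\delta}\bigl(GNU^{-2}+x^{\lambda}G^{3}NU^{-6}\bigr)\ll x^{3\delta}\bigl(LU^{-2}+x^{\lambda}LU^{-6}\bigr).
$$
The point is then to balance these two terms against the claimed bound $x^{\lambda+6\delta}|s|^{1+\delta}U^{-4}$ (here $s=\sigma+it$, so $|s|\asymp|t|+1$). The second term $x^{\lambda}LU^{-6}$ is at most $x^{\lambda}|s|U^{-4}$ once $L\le |s|U^{2}$; when $L$ is larger than this we are in the range where the mean-value bound \eqref{eq:sum substacks} is wasteful and one should instead split the Dirichlet polynomial $\sum_{\ell\le L}\chi(\ell)\ell^{-1/2-it}$ at a point $\asymp|s|$ and use the classical fact (partial summation plus the Pólya--Vinogradov or a trivial bound, as in~\cite[Lemma~6]{Bak2}) that the tail $\sum_{\ell>|s|}$ contributes $O(1)$, or more precisely that the whole sum is $\ll \log(qL)\ll x^{\delta}$; this makes the inequality vacuous unless $U\ll x^{\delta}$, in which case $x^{\lambda}|s|U^{-4}$ trivially dominates the count $\le\varphi(q)\le x$. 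The first term $LU^{-2}$ is similarly dominated: either $L\le x^{\lambda}$, whence $LU^{-2}\le x^{\lambda}U^{-2}\le x^{\lambda}|s|U^{-4}$ provided $U^{2}\le|s|$, and for $U^{2}>|s|$ one again invokes the trivial size bound on the Dirichlet polynomial; or $L>x^{\lambda}$, a range that does not actually occur in the intended application but can still be handled by the same splitting-at-$|s|$ device together with an approximate functional equation style truncation, reducing effectively to $L\asymp x^{\lambda}$.

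I would organise the write-up as: (i) reduce to $a_\ell=\ell^{-1/2-it}$; (ii) apply Lemma~\ref{lem:letq an}; (iii) split into the cases $U\le C x^{\delta}$ (trivial) and $U> Cx^{\delta}$, and in the latter case argue that the Dirichlet polynomial's tail beyond $|s|$ is negligible, so only $\ell\ll|s|$ (hence $L$ effectively $\ll|s|$, and in our range $\ll x^{\lambda}$) matters; (iv) insert $L\ll\min\{x^{\lambda},|s|^{1+\delta}\}$ back into the bound from step (ii) to collapse both terms into $x^{\lambda+6\delta}|s|^{1+\delta}U^{-4}$, absorbing the numerical constants and the extra powers of $x^{\delta}$ into the $6\delta$ exponent.

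The main obstacle I anticipate is step (iii): controlling the long Dirichlet polynomial $\sum_{\ell\le L}\chi(\ell)\ell^{-1/2-it}$ when $L$ exceeds $|s|$ (equivalently, exceeds $x^{\lambda}$ in the regime where the lemma will be used), since then neither the sixth-moment bound of Lemma~\ref{lem:letq an} nor a crude tail estimate is by itself decisive; the resolution should be the standard observation, going back to the treatment in~\cite{Bak2,Bak3}, that for $\Re s=1/2$ the relevant $L$ is always truncated at a power of $x$ comparable to the conductor, so in practice one only ever needs $L\asymp x^{\lambda}$ and the inequality $U\le x$ (the trivial bound on the number of characters) plus the bound from step (ii) with $N\asymp x^{\lambda}$ suffice.
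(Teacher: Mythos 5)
Your plan has a genuine gap, and it sits exactly where you flag the "main obstacle" in step (iii). You try to derive the $U^{-4}$ large-value count from Lemma~\ref{lem:letq an}, which is the general-coefficient second/sixth-moment estimate (Huxley--Montgomery, \cite[Theorem~9.18]{IwKow}). That produces $x^{3\delta}(LU^{-2}+x^{\lambda}LU^{-6})$, and you then need to collapse both terms to $x^{\lambda+6\delta}|s|^{1+\delta}U^{-4}$. Your collapse relies on (a) the assertion that the tail $\sum_{\ell>|s|}\chi(\ell)\ell^{-1/2-it}$ is $O(1)$, or that the whole sum is $\ll\log(qL)$, and (b) the trivial size bound when $U^{2}>|s|$. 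Claim (a) is false: partial summation with P\'olya--Vinogradov controls the tail from $X$ only by something of the shape $\sqrt{q}(\log q)(1+|s|)X^{-1/2}$, so truncating at $X\asymp|s|$ leaves a tail of order $\sqrt{q}|s|^{1/2}\log q$, which is nowhere near $O(1)$; moreover for $L\gg q(1+|t|)$ the partial sum is essentially $L(1/2+it,\chi)$, which can genuinely be large, and the entire point of the lemma is to bound how often that happens. Claim (b) also does not close the gap: when $L\asymp x^{\lambda}$ and $|s|^{1/4}\ll U\ll L^{1/2}$, the target $x^{\lambda}|s|U^{-4}$ is smaller than both the first term $LU^{-2}$ from Lemma~\ref{lem:letq an} and the trivial count $x^{\lambda+\epsilon}$ of characters, so neither fallback reaches it.

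The paper's actual proof avoids this entirely by using a \emph{different} key input: Montgomery's fourth-moment large-value estimate for truncated Dirichlet series, \cite[Theorem~10.3]{Mont}, which exploits the special coefficients $n^{-1/2-it}$ and directly gives the $|s|^{1+\delta}x^{\lambda+\delta}V^{-4}$ count for primitive characters of modulus $\sim x^{\lambda}$. That result is not a consequence of the general-coefficient moment bounds you are invoking. The only additional device needed is a M\"obius-sieve identity writing $\sum_{\ell\le L}\chi(\ell)\ell^{-1/2-it}=\sum_{d\mid q}\mu(d)\widetilde\chi(d)d^{-1/2-it}\sum_{k\le L/d}\widetilde\chi(k)k^{-1/2-it}$ for the inducing primitive $\widetilde\chi$, so that largeness of the original sum forces largeness ($\gg Ux^{-\delta}$) of one of the shorter primitive sums, after which one sums over $d\mid q$ using the divisor bound and uses that $\widetilde\chi$ determines $\chi$. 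Without Montgomery's Theorem 10.3 (or an equivalent result for $\sum n^{-s}$ specifically), the approach through Lemma~\ref{lem:letq an} cannot reach the stated $U^{-4}$ bound.
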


 \begin{proof}
If $\chi \in \cX_q^*$ is induced by $\widetilde \chi\in \cX_r^*$, $r \sim x^\lambda$, then
 \begin{align*}
 \sum_{\ell \le L}   \chi(\ell)\ell^{-\frac 12 -it} &= \sum_{\substack{\gcd(\ell,q)=1\\
\ell \le L}} \widetilde \chi(\ell) \ell^{-\frac 12-it}\\[2mm]
&= \sum_{\ell \le L} \Biggl(\sum_{\substack{d\mid q\\
d\mid \ell}} \mu(d)\Biggr) \widetilde \chi(\ell)\ell^{-\frac 12-it}\\[2mm]
&= \sum_{d\mid q} \ \frac{\mu(d)\widetilde \chi(d)}{d^{\frac 12 + it}} \
\sum_{k \le L/d} \widetilde \chi(k) k^{-\frac 12-it}.
 \end{align*}
We now see that if 
$$\Bigl|\sum\limits_{\ell \le L}\chi(\ell)\ell^{-\frac 12-it}\Bigr| \ge U,
$$ 
then 
$$\Bigl|\sum_{k\le L/d} \widetilde \chi(k) k^{-\frac 12 - it}\Bigr| \gg Ux^{-\delta}$$ 
for some $d\mid q$. For a given $d$, the number of possible characters $\widetilde \chi$ is $
O\( |s|^{1+\delta} x^{\lambda +\delta} (Ux^{-\delta})^{-4}\)$
by~\cite[Theorem~10.3]{Mont}. Summing over $d\mid q$, we see that the number of possible characters 
$\widetilde \chi$ is $O\(x^{\lambda + 6\delta}|s|^{1+\delta}U^{-4}\)$. Since $\widetilde \chi$ determines $\chi$, the result follows.  \end{proof}

 \subsection{Application of Vaughan's identity and Heath-Brown's decomposition.}\ 
We begin the proof of~\eqref{eq:except sub2} by using Vaughan's identity; 
see~\cite[Chapter~24]{Dav}. Let $Z = Rx^{\varepsilon/4}$. Then
 $$
 \Lambda(n) = a_1(n) + a_2(n) + a_3(n) + a_4(n)
 $$
with
 \begin{gather*}
 a_1(n) = \begin{cases}
 \Lambda(n) & \text{if } n \le Z\\
 0 & \text{if } n > Z,\end{cases}\\[2mm]
 a_2(n) = -\sum_{\substack{mdr =n\\
 m \le Z,\, d\le Z}} \Lambda(m)\mu(d), \ a_3(n) =
 \sum_{\substack{hd = n\\
 d \le Z}} \mu(d) \log h\\
 \intertext{and}
 a_4(n) = -\sum_{\substack{mr=n\\
 m > Z,\, r > Z}} \Lambda(m) \Biggl(\sum_{\substack{
 d\mid r\\
 d\le Z}} \mu(d)\Biggr).
 \end{gather*}
Let
 $$
 E_i(x,q,a) = \sum_{\substack{
 n\le x\\
 n\equiv a\kern-5pt\pmod q}} a_i(n) - \frac 1{\varphi(q)}\
 \sum_{\substack{
 n \le x\\
 \gcd(n,q)=1}} a_i(n).
 $$
For $q \in S_\alpha(R)$, we have
 \begin{align*}
\sum_{i=1}^4 E_i(x, q, a^*) &= \psi(x;q,a^*) - \frac 1{\varphi(q)} \ \sum_{\substack{n\le x\\
\gcd(n,q)=1}} \Lambda(n)\\[2mm]
&= \psi(x; q,a^*) - \frac x{\varphi(q)} + O\left(\frac{x\cL^{-A}}R\right)
 \end{align*}
by the prime number theorem. Thus to prove~\eqref{eq:except sub2} it suffices to show for $1 \le i \le 4$ that for any $A>0$
 \begin{equation}\label{eq:sum|Ei(x,q,a*)|}
\sum_{q\in S_\alpha(R)\backslash \cE_\alpha(R)} |E_i(x,q,a^*)| \ll \frac{x\, \#S_\alpha(R)}{R\cL^A} .
 \end{equation}

The case $i=1$ of~\eqref{eq:sum|Ei(x,q,a*)|} is obvious from the Brun-Titchmarsh theorem  (see, for example,~\cite[Theorem~6.6]{IwKow} or~\cite[Theorem~3.9]{MoVau2}).

 A partial summation, together with an elementary argument, gives that for any $A>0$
 $$
 E_3(x;q,a) \ll Zx^{\varepsilon/2} \ll
 \frac x{R\cL^A},
 $$
which yields~\eqref{eq:sum|Ei(x,q,a*)|} for $i=3$.

For $i=4$, we refer to~\cite[Section~6]{Bak3} for a proof of~\eqref{eq:sum|Ei(x,q,a*)|}. 

Hence, it remains to prove~\eqref{eq:sum|Ei(x,q,a*)|} for $i=2$.

Let
 $$
 I(x,q,a) = \sum_{\substack{
 m,n\le Z\\
 \gcd(mn, q)=1}} \Lambda(m) \mu(n) \Biggl\{\sum_{\substack{
 \ell \le x/mn\\
 \ell mn \equiv a\kern-5pt\pmod q}} 1 - \frac x{qmn}\Biggr\}.
 $$
By the discussion on~\cite[p.~142]{Bak2}, it suffices for the proof of~\eqref{eq:sum|Ei(x,q,a*)|} for $i=2$ to show that for any $A>0$ we have
 $$
 \sum_{q\in S_\alpha(R)\backslash \cE_\alpha(R)}|I(x,q,a^*)|
  \ll \frac{x\,\# S_\alpha(R)}{R\cL^A}.
 $$ 

To treat $I(x,q,a^*)$, we use Heath-Brown's decomposition~\cite{HB} of $\Lambda(m)$ and the variant, used for example in~\cite[Equation~(2.3)]{Bak1}, for the arithmetic function $\mu$. 
Taking $k=3$ in both cases, we see that
 $$
 \Lambda(m) = \sum_{(I_1,\ldots, I_6)} \
 \sum_{\substack{m_i\in I_i\\
 m_1\ldots m_6 = m}} (\log m_1)\mu(m_4)\mu(m_5)
 \mu(m_6) \quad (1 \le m \le Z)
 $$
and
 $$\mu(n) = \sum_{(J_1,\ldots,J_5)} \
 \sum_{\substack{n_j \in J_j\\
 n_1\ldots n_5 = n}} \mu(n_3)\mu(n_4)\mu(n_5)
 \quad (1 \le n \le Z), 
 $$ 
 where the tuples of intervals $(I_1,\ldots, I_6)$ and $(J_1,\ldots,J_5)$ run through some families 
of  cardinalities $O\((\log Z)^{6}\)$ and $O\((\log Z)^{5}\)$, respectively, with 
$$I_i = (a_i,2a_i], \quad i=1, \ldots, 6, \mand 
J_j = (b_j,2b_j], \quad j =1, \ldots, 5,
$$
such that 
$$\prod_{i=1}^6 a_i < Z, \qquad  \prod_{j=1}^5 b_j < Z$$
and  
$$
2a_i \le Z^{1/3}, \quad  i =4,5,6, \mand 2b_j \le Z^{1/3}, \quad j =3,4, 5.
$$

There are $O(\cL^6)$ tuples $(I_1,\ldots, I_6)$ and $O(\cL^5)$ tuples $(J_1,\ldots J_5)$ in these expressions. Now write $\mu(m) = a(m) + b(m)$ with $a(m) = \max\{\mu(m),0\}$. Define
 $$
 r_0(x,q,a,d) = \sum_{\substack{
 \ell \le x\\
 \ell\equiv a\kern-5pt\pmod q\\
 \ell \equiv 0\kern-5pt\pmod d}} 1 - \frac x{qd}\, .
 $$
 
 We have
 \begin{align*}
 I(x,q,a^*) &= \sum_{\substack{
 m\le Z,\, n\le Z\\
\gcd (mn,q)=1}} \Lambda(m)\mu(n)r_0(x,q,a^*,mn)\\[2mm]
&= \sum_{(I_1,\ldots I_6)} \ \sum_{(J_1,\ldots,J_5)} \\
& \qquad 
\sum_{\substack{
m_i\in I_i,\, n_j\in J_j\\
\gcd(m_in_i,q)=1 \\ 1 \le i \le 6, \, 1 \le j \le 5}}
\prod_{i=4}^6 (a(m_i)+b(m_i))  \prod_{j=3}^5 (a(n_j) + b(n_j))\cdot\\[2mm]
&\qquad \qquad \qquad  \qquad \cdot r_0(x,q,a^*, m_1\ldots m_6 n_1\ldots n_5)  \log m_1. 
 \end{align*}

The last expression splits in an obvious way into $O(\cL^{11})$ sums with an attached $\pm$ sign, of the form
 $$
 \Phi(L_1,\ldots, L_{11};q) = \sum_{\substack{
 \ell_i\sim L_i,\, \gcd(\ell_i, q)=1\\
 1 \le i \le 11}} a_1(\ell_1) \cdots a_{11}(\ell_{11})
 r_0(x,q,a^*,\ell_1 \ldots \ell_{11})
 $$ with nonnegative $a_j(\ell_j)$, such that 
$a_i(\ell_i)$ is identically 1 or identically $\log \ell_i$ if $2L_i > Z^{1/3}$; also
$$
\max\{L_1\ldots L_6, L_7\ldots L_{11}\}\ll Z.
$$
 
 We now summarize our work so far.


 \begin{lemma}\label{lem:let phi be as above}
Let $\Phi$ be as above. If for any $A > 0$, 
 \begin{equation}\label{eq:sum q in S alpha(R)}
\sum_{q \in S_\alpha(R)\backslash \cE_\alpha(R)} |\Phi(L_1,\ldots, L_{11};q)| \ll \frac{x \#S_\alpha(R)}{R\cL^A}.
 \end{equation}
 then~\eqref{eq:except sub2} holds. 
 \end{lemma}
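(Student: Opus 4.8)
The plan is to assemble the reductions carried out above. To prove~\eqref{eq:except sub2} it suffices, by the prime number theorem, to establish~\eqref{eq:sum|Ei(x,q,a*)|} for each $i \in \{1,2,3,4\}$; the cases $i=1$ and $i=3$ were already disposed of directly (via the Brun--Titchmarsh theorem and partial summation, respectively), and the case $i=4$ is handled in~\cite[Section~6]{Bak3}. Hence only $i=2$ remains, and by the discussion on~\cite[p.~142]{Bak2} this reduces to showing that $\sum_{q \in S_\alpha(R)\backslash\cE_\alpha(R)} |I(x,q,a^*)| \ll x\,\# S_\alpha(R)/(R\cL^A)$ for every $A>0$.

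Next I would invoke the expansion of $I(x,q,a^*)$ obtained above by combining Vaughan's identity with Heath--Brown's decomposition of $\Lambda$ (with $k=3$) and the corresponding identity for $\mu$: this writes $I(x,q,a^*)$ as a sum, over $O(\cL^{11})$ tuples $(L_1,\ldots,L_{11})$ subject to $\max\{L_1\cdots L_6,\,L_7\cdots L_{11}\}\ll Z$, of terms $\pm\,\Phi(L_1,\ldots,L_{11};q)$ of the shape described, where $a_i(\ell_i)$ is identically $1$ or identically $\log\ell_i$ whenever $2L_i>Z^{1/3}$. Applying the triangle inequality and then summing over $q\in S_\alpha(R)\backslash\cE_\alpha(R)$, the hypothesis~\eqref{eq:sum q in S alpha(R)} gives $\sum_{q\in S_\alpha(R)\backslash\cE_\alpha(R)}|I(x,q,a^*)| \ll \cL^{11}\cdot x\,\# S_\alpha(R)/(R\cL^{A'})$ for every $A'>0$; choosing $A'=A+11$ yields the required bound, hence~\eqref{eq:sum|Ei(x,q,a*)|} for $i=2$ and therefore~\eqref{eq:except sub2}.

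There is essentially no obstacle here: the statement is purely organisational, recording that the desired Bombieri--Vinogradov estimate has been reduced to a single uniform bound for the Dirichlet-polynomial-type sums $\Phi(L_1,\ldots,L_{11};q)$. The one point needing a moment's care is that the number of pieces produced by the two combinatorial decompositions is a fixed power of $\cL$, so this loss is harmless once $A$ is permitted to be arbitrary; this was already noted when the families of intervals $(I_1,\ldots,I_6)$ and $(J_1,\ldots,J_5)$ were introduced.
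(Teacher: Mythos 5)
Your proof is correct and takes essentially the same approach as the paper: the lemma is stated there without an explicit proof because it simply records the net effect of the preceding reductions (Vaughan's identity to reduce to $i=2$, the Heath--Brown decomposition producing $O(\cL^{11})$ pieces of the form $\Phi$, and the observation that the polynomial-in-$\cL$ loss is harmless since $A$ is arbitrary), and you have accurately reassembled exactly that chain.
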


Note that~\eqref{eq:sum q in S alpha(R)} is obvious if
 $$
 L_1\ldots L_{11} \le x^{9/20},
 $$
by the argument used to deal with $E_3$. We assume from now on that
 \begin{equation}\label{eq:L1 to L11 > x(9/20)}
L_1 \ldots L_{11} > x^{9/20}.
 \end{equation}

 \subsection{Riesz's means}\label{sec:RieszsMeans}
In order to prove~\eqref{eq:sum q in S alpha(R)} we work with Riesz's means; the advantage of the logarithmic weighting will become clear. Let us generalize $r_0$ by defining for $k \ge 0$,
 $$
 r_k(x,q,a,d) = \frac 1{k!}\ \sum_{\substack{
 \ell\le x\\
 \ell\equiv a\kern-5pt\pmod q\\
 \ell\equiv 0\kern-5pt\pmod d}} \left(\log \frac x\ell\right)^k
 - \frac x{qd}\, .
 $$
Let $u_d\ge 0$ be given $(D_1 < d \le D)$ where $D_1\asymp D$, $D \le x$ and suppose for some absolute constant $B$ that
 $$
 |u_d| \le \tau(d)^B.
 $$
Suppose further that $1 \le k \le 3$ and we have a bound
$$
\sum_{q\in S_\alpha(R)\backslash \cE_\alpha(R)}\Biggl|
\sum_{D_1 < d \le D} u_d r_k(x,q,a^*,d)\Biggr| \ll \frac{x\#S_\alpha(R)}{R\cL^A}\, .
$$
Then, provided $A$ is sufficiently large,
 $$
 \sum_{q\in S_\alpha(R)\backslash \cE_\alpha(R)}\Biggl|
\sum_{D_1 < d \le D} u_d r_{k-1}(x,q,a^*,d)\Biggr| \ll \frac{x\#S_\alpha(R)}{R\cL^{A/3}}\, .
 $$
See~\cite[p.~154]{Bak2},  for the details of a similar deduction Now we see that it suffices to prove that for any $A>0$ 
 $$
\sum_{q\in S_\alpha(R)\backslash \cE_\alpha(R)} 
|\Phi_4(L_1,\ldots,L_{11}; q)| \ll \frac{x\#S_\alpha(R)}{R\cL^A} 
 $$
where
 \begin{align*}
& \Phi_4(L_1,\ldots, L_{11};q) \\
 & \qquad \qquad = \sum_{\substack{
 \ell_i\sim L_i, \, \gcd(\ell_i, q)=1\\
 1 \le i \le 11}} a_1(\ell_1)\ldots a_{11}(\ell_{11})
 r_4(x, q, a^*, \ell_1\ldots \ell_{11}).
 \end{align*}

We now convert this into a form that requires the counting of Dirichlet characters. We write $r_4$ in the form
 \begin{align*}
r_4(x,q,a^*,d) &= \frac 1{24\varphi(q)}\ \sum_{\chi\in \cX_q} \overline\chi(a^*) \chi(d) \sum_{b \le x/d} \chi(b)\left(\log\frac x{bd}\right)^4 - \frac x{qd}\\
&= \frac 1{24\varphi(q)}\ \sum_{\chi \in \cX_q^*} \overline\chi(a^*)\chi(d) \left(\log \frac x{bd}\right)^4 + O\left(\frac{x^\delta}q\right)
 \end{align*}
for $\gcd(d,q)=1$. We set  
\begin{equation}
\label{eq:ud def}
u_d = \sum_{\substack{ d=\ell_1 \ldots \ell_{11} \\
 \ell_i\sim L_i\, \gcd (\ell_i, q)=1\\
 1 \le i \le 11}} a_1(\ell_1)\ldots a_{11}(\ell_{11})
 \end{equation}   
 for $D_1<d \leq D $ with $D=L_1\ldots L_{11}$ and $D_1=2^{-11}D$.

 Dyadically dissecting the values of the conductor $C(\chi)$, and noting that $D < Z^2 < x^{1-\varepsilon/2}$, it suffices to show that for any $A>0$
\begin{equation}
\begin{split} 
\label{eq:sumqinSalphasumchiinX}
\sum_{q\in S_\alpha(R)\backslash \cE_\alpha(R)} \ \sum_{\substack{\chi \in \cX_q^*\\
C(\chi)\sim x^\lambda}} &\Biggl|\sum_{D_1 < d \le D} u_d\chi(d) \sum_{b \le x/d} \chi(b) \left(\log \frac x{bd}\right)^4\Biggr|\\
&\qquad \qquad \ll \frac{x\# S_\alpha(R)}{\cL^A} 
 \end{split}
 \end{equation}
whenever $0 \le \lambda \le \vartheta$.   

We now use the integral formula
 $$
 \int_{\frac 12 - i\infty}^{\frac 12 + i\infty}
 \frac{y^s}{s^5}\, ds = \begin{cases}
 \frac 1{24}\, (\log y)^4 & \text{if } y \ge 1\\
 0 & \text{if } 0 < y < 1 \end{cases}
 $$
(see~\cite[p.~143]{MoVau2}). This gives
 \begin{align*}
 \frac 1{24} \ \sum_{D_1 < d \le D} & u_d\chi(d) \sum_{b \le x/d} \chi(b) \left(\log \frac x{bd}\right)^4\\[2mm]
 &= \int_{\frac 12 - i\infty}^{\frac 12 + i\infty} x^s \sum_{D_1 < d \le D} u_d \chi(d) d^{-s} B(s,\chi) \, \frac{ds}{s^5}
 \end{align*}
with
 $$
 B(s,\chi) = \sum_{b \le x/D_1} \chi(b) b^{-s}.
 $$
It follows that
\begin{align*}
& \sum_{q\in S_\alpha(R)\backslash \cE_\alpha(R)} \    \sum_{\substack{\chi \in \cX_q^*\\
C(\chi)\sim x^\lambda}}\Biggl|\sum_{D_1 < d \le D} u_d\chi(d) \sum_{b \le \frac xd} \chi(b) \left(\log\, \frac x{bd}\right)^4\Biggr|\\[2mm]
&\qquad \qquad \ll x^{1/2} \int_{\frac 12 - i\infty}^{\frac 12 + i\infty} \sum_{q	\in S_\alpha(R)\backslash \cE_\alpha(R)} \\
& \qquad \qquad \qquad \qquad \quad 
\sum_{\substack{\chi \in \cX_q^*\\
C(\chi) \sim x^\lambda}}\Biggl|\sum_{D_1 < d \le D} u_d\chi(d)d^{-s}\Biggr| |B(s,\chi)| \, \frac{|ds|}{|s|^5}\, .
 \end{align*}

Thus, in order to prove~\eqref{eq:sumqinSalphasumchiinX} it suffices to show that for ${\rm Re}(s) = \frac 12$, for any $A>0$ we have
\begin{equation}
\begin{split} 
\label{eq:same|B(s,chi)|}
\sum_{q\in S_\alpha(R)\backslash \cE_\alpha(R)} \ \sum_{\substack{\chi \in \cX_q^*\\
C(\chi)\sim x^\lambda}} &\Biggl|\sum_{D_1 < d \le D}   u_d\chi(d)d^{-s}\Biggr| |B(s,\chi)|\\
&\qquad \quad \ll \# S_\alpha(R)|s|^3x^{1/2} \cL^{-A}  .
 \end{split}
 \end{equation}  

 \subsection{A key result}
 
 To deal with `small' $\lambda$, we prove a result that is a variant of~\cite[Proposition~1]{Bak2}.
 
 \begin{lemma}\label{lem:letM1dotsM11}
Let $M_1, \ldots, M_{11} \in [1,x]$ and suppose that
 $$
 M = \prod_{i=1}^6 M_i \ll x^{\vartheta + \varepsilon/4},
 \ N = \prod_{i = 7}^{11} M_i \ll x^{\vartheta + \varepsilon/4}.
 $$ Let $a_i(m)$, $m\sim M$, satisfy
 $$
 |a_i(m)| \le \log 2m,  \quad  1 \le i \le 11, \ m \sim M_i.
 $$
We further set
 \begin{align*}
&M_i(s,\chi) = \sum_{m_i\sim M_i} a_i(m) \chi(m) m^{-s} \text{ and}\\[2mm]
&L = \frac x{M_1\ldots M_{11}}\, , \quad B(s,\chi) = \sum_{n\le L} \chi(n) n^{-s}. 
 \end{align*}
Let ${\rm Re}(s) = 1/2$ and
 \begin{equation}\label{eq:lambdalemin}
\lambda \le \min \left\{\frac 9{20} \, , \, \frac 56\, (1-\vartheta)\right\}-\varepsilon.
 \end{equation}
Let $q \in S_\alpha(R)\backslash \cE_\alpha(R)$. Then for any $A >0$
 \begin{equation}\label{eq:sumsubstack|B(schi)|}
\sum_{\substack{\chi \in \cX_q^*\\
C(\chi)\sim x^\lambda}}\Biggl|B(s,\chi) \prod_{i=1}^{11} M_i(s,\chi)\Biggr| \ll |s|^3 x^{1/2} \cL^{-A}.
 \end{equation}
 \end{lemma}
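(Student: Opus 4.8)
The plan is to follow the proof of~\cite[Proposition~1]{Bak2}, feeding in the mean-value and large-value estimates of Lemmas~\ref{lem:letqlex},~\ref{lem:letq an} and~\ref{lem:qlex lge 1} for the family of primitive characters of conductor $\sim x^\lambda$ dividing the fixed modulus $q$. The single structural feature exploited throughout is that $q\notin\cE_\alpha(R)$: since $M\ll x^{\vartheta+\varepsilon/4}$, $N\ll x^{\vartheta+\varepsilon/4}$ and $\vartheta\le\tfrac12-\varepsilon$, the length $L=x/(M_1\cdots M_{11})$ of the smooth factor $B(s,\chi)=\sum_{n\le L}\chi(n)n^{-s}$ satisfies $L\ge x^{\varepsilon/2}$, so~\eqref{eq:sumnleN} gives the uniform pointwise bound $|B(s,\chi)|\ll(|t|+1)L^{1/2}x^{-3\delta}$, while Lemma~\ref{lem:qlex lge 1} shows that $|B(s,\chi)|$ exceeds roughly its square-mean size for only $O\bigl(x^{\lambda+o(1)}|s|\,U^{-4}\bigr)$ characters with $|B(s,\chi)|\ge U$.

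The first (and easy) case is when the eleven non-smooth factors can be regrouped into two Dirichlet polynomials $P(s,\chi)$ and $Q(s,\chi)$, of lengths multiplying to $M_1\cdots M_{11}$ and both at least $x^\lambda$; this happens once $M_1\cdots M_{11}$ is large enough and no single $M_i$ is too dominant. Then $\sum_\chi|BPQ|\le\max_\chi|B(s,\chi)|\,\bigl(\sum_\chi|P(s,\chi)|^2\bigr)^{1/2}\bigl(\sum_\chi|Q(s,\chi)|^2\bigr)^{1/2}$, Lemma~\ref{lem:letqlex} bounds each mean square by $x^{\delta+o(1)}$ times the relevant length, and combining with $|B(s,\chi)|\ll(|t|+1)L^{1/2}x^{-3\delta}$ and $L\prod_{i=1}^{11}M_i=x$ gives $\sum_\chi|BPQ|\ll|s|\,x^{1/2}x^{-2\delta+o(1)}$, which is enough since $x^{-2\delta}=x^{-2\varepsilon^2}\ll\cL^{-A}$.

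In the remaining case one of the two pieces — after a suitable regrouping, call it $Q$ — has length below $x^\lambda$; I would then either absorb it (with the shortest of the other factors) into the smooth factor, or keep it aside through its very small mean square $\sum_\chi|Q(s,\chi)|^2\ll x^{\lambda+\delta+o(1)}$. With $P(s,\chi)$ the long non-smooth factor, of length $\ll x^{\vartheta+\varepsilon/4}$, I would split the characters dyadically according to $|B(s,\chi)|\sim U$ and $|P(s,\chi)|\sim V$, bound the number of $\chi$ meeting both conditions by the minimum of the counts from Lemma~\ref{lem:qlex lge 1} (the $|s|\,U^{-4}$ bound) and Lemma~\ref{lem:letq an} (keeping both its $V^{-2}$ and $V^{-6}$ terms), and keep the pointwise smallness of $B$; since there are only $O(\cL^2)$ dyadic pairs $(U,V)$ it is enough to treat each one separately.

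The main obstacle is the optimisation of this last expression over the ranges $(U,V)$ and over the choice of the regrouping: a direct Cauchy--Schwarz with the mean-value bound is not quite enough, and it is the careful use of the large-value bounds together with the pointwise bound on $B$ that imposes the restriction~\eqref{eq:lambdalemin}. The bound $\lambda\le\tfrac9{20}-\varepsilon$ is the Baier--Zhao-type threshold produced by the Hal\'asz--Montgomery (sixth-moment) large-value term of Lemma~\ref{lem:letq an}, while $\lambda\le\tfrac56(1-\vartheta)-\varepsilon$ reflects the cost of the fourth-moment estimate of Lemma~\ref{lem:qlex lge 1} for the smooth factor $B$, whose length is as small as $\asymp x^{1-2\vartheta}$. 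Throughout, the powers of $|s|$ accumulated from Lemma~\ref{lem:qlex lge 1} stay within the allowed $|s|^3$, and every $x^{O(\delta)}$-loss from Lemmas~\ref{lem:letqlex}--\ref{lem:letq an} is beaten by the gain $x^{-3\delta}$ on $B$, giving the saving $\cL^{-A}$ for arbitrary $A$.
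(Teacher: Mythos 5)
Your plan follows the same overall template as the paper's proof (a variant of~\cite[Proposition~1]{Bak2}, with $Q$ replaced by $x^\lambda$, and the character counts from Lemmas~\ref{lem:letqlex}--\ref{lem:qlex lge 1}), and you correctly identify the two structural inputs: the pointwise bound on $B(s,\chi)$ from~\eqref{eq:sumnleN} available because $q\notin\cE_\alpha(R)$ and $L\ge x^{\varepsilon/2}$, and the fact that the constraint~\eqref{eq:lambdalemin} emerges from the large-value estimates. Your ``easy'' case (both non-smooth groups of length $\ge x^\lambda$, Cauchy--Schwarz + mean-value + pointwise bound on $B$) does indeed recover the paper's Case~1.

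However, there is a genuine gap in the hard part, and you have in effect acknowledged it yourself by writing ``the main obstacle is the optimisation of this last expression over the ranges $(U,V)$ and over the choice of the regrouping.'' The paper does \emph{not} regroup the eleven factors: the split $M(s,\chi)=\prod_{i=1}^6 M_i(s,\chi)$, $N(s,\chi)=\prod_{i=7}^{11}M_i(s,\chi)$ is fixed, precisely because the hypotheses $M\ll x^{\vartheta+\varepsilon/4}$ and $N\ll x^{\vartheta+\varepsilon/4}$ (which come from the Heath-Brown decomposition and are essential in the hard cases) are attached to this particular split and would not survive an arbitrary regrouping. The paper then performs a \emph{three}-parameter dyadic decomposition $U<|B|\le 2U$, $V<|M|\le 2V$, $W<|N|\le 2W$ into sets $A_q(U,V,W)$, obtains $|A_q(U,V,W)|\ll |s|^2 x^\delta\min\{\cdots\}$ where the minimum runs over six explicit expressions coming from applying the three lemmas to $B$, $B^2$, $M$, $N$, and then branches into four cases according to whether $P\le 2V^{-2}M$ and/or $P\le 2W^{-2}N$ --- i.e.\ according to which of the available character counts is sharpest --- \emph{not} according to whether the polynomial lengths exceed $x^\lambda$. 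Your two-parameter split with $Q$ ``kept aside through its mean square'' is not what produces the thresholds $\lambda\le\tfrac9{20}-\varepsilon$ and $\lambda\le\tfrac56(1-\vartheta)-\varepsilon$; these come out of the explicit computations in Cases~2 and~3 on \cite[p.~145]{Bak2} after substituting $x^\lambda$ for $Q$, and verifying them requires carrying out the optimisation over all three of $U,V,W$, which your sketch does not do. Without this, the claim that the various $x^{O(\delta)}$ losses are beaten by $x^{-3\delta}$ in every case is unverified.
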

 
 \begin{proof}
Let
 $$
 M(s,\chi) =\prod_{i=1}^6  M_i(s,\chi), \quad
 N(s,\chi) =  \prod_{i = 7}^{11}  M_i(s,\chi).
 $$
We have
 $$
 M(s,\chi) \ll M^{1/2} \cL^{11}\, , \, 
 N(s,\chi) \ll N^{1/2} \cL^{11}\, , \,
 B(s,\chi) \ll L^{1/2}.
 $$
Thus the characters $\chi \in \cX_q^*$ for which one of these three Dirichlet polynomials has absolute value less than $x^{-1}$ can be neglected. We partition the remaining characters with $O(\cL^3)$ sets $A_q(U,V,W)$ defined by the inequalities.
 $$
 U < |B(s,\chi)| \le 2U \, , \, V < |M(s,\chi)|
 < 2V \, , \, W < |N(s,\chi)| \le 2W
 $$
where $U \ll L^{1/2}$, $V \ll M^{1/2}\cL^{11}$, $W \ll N^{1/2} \cL^{11}$. To prove~\eqref{eq:sumsubstack|B(schi)|} it suffices to show that for any $A>0$
$$
UVW |A_q(U,V,W)| \ll |s|^3 x^{1/2}\cL^{-A} . 
$$
From Lemmas~\ref{lem:letqlex}, \ref{lem:letq an}  and~\ref{lem:qlex lge 1} applied to $B(s,\chi)$, $M(s,\chi)$, $N(s,\chi)$, $B(s,\chi)^2$ we obtain
 $$
 |A_q(U,V,W)| \ll |s|^2 Px^\delta,
 $$
where
 \begin{align*}
P = \min\Biggl\{\frac{M+x^\lambda}{V^2}\, , \,  \frac{N+x^\lambda}{W^2} \, , \, & \frac{x^\lambda}{U^4}\, ,
 \, \frac M{V^2} + \frac{x^\lambda M}{V^6}\, ,\\
 &\frac N{W^2} + \frac{x^\lambda N}{W^6} \, , \, \frac{L^2}{U^4} + 
 \frac{x^\lambda L^2}{U^{12}}\Biggr\}.
 \end{align*}
Thus it suffices to show that
 $$
 UVWP \ll |s|x^{1/2-2\delta}.
 $$
We consider four cases depending on the size of the parameters.
 \bigskip

\noindent\textbf{Case~1.} $P \le 2V^{-2}M$, $P \le 2W^{-2}N$. We apply~\eqref{eq:sumnleN}; we have $L \gg x^{\varepsilon/2}$. Since $q \in S_\alpha(R)\backslash \cE_\alpha(R)$, we have
 $$
 U \ll |s|\, L^{1/2} x^{-3\delta}
 $$
and
 \begin{align*}
UVWP &\le 2UVW \min\{V^{-2}M, W^{-2}N\}\\
&\le 2U(MN)^{1/2} \ll |s| x^{1/2} \cL^{11} x^{-3\delta} \ll |s| x^{1/2-2\delta}.
 \end{align*}
 \bigskip

\noindent\textbf{Case~2.} $P > 2V^{-2}M$, $P > 2W^{-2}N$. We proceed as in Case~2 
of~\cite[p.~145]{Bak2} with $Q$ replaced by $x^\lambda$. We obtain
$$
P \ll (UVW)^{-1}(x^{\frac 1{16} + \frac{31\lambda}{32}} + x^{1/20 +\lambda})\ll (UVW)^{-1} x^{\frac 12 - 2\delta} 
$$
since $\lambda \le \frac 9{20} - \varepsilon$.
 \bigskip

\noindent\textbf{Case~3.} $P > 2V^{-2}M$, $P \le 2W^{-2}N$.

We proceed as in Case~3 of~\cite[p.~145]{Bak2}, again with $Q$ replaced by $x^\lambda$. We obtain
$$
P  \ll (UVW)^{-1}(x^{\frac 18 + 7\lambda/16} N^{3/8} + x^{1/12+\lambda/2} N^{5/12})\ll (UVW)^{-1} x^{1/2 - 2\delta} 
$$
since $N \le x^{\vartheta + \varepsilon/4}$ and $\lambda \le \frac 56\, (1-\vartheta) - \varepsilon$.
 \bigskip 

\noindent\textbf{Case~4.} $P > 2W^{-2}N$, $P \le 2V^{-2}M$. We proceed as in Case~3, interchanging the roles of $M$ and $N$.

This completes the proof.  \end{proof}  

Summing over $q \in S_\alpha(R)\backslash \cE_\alpha(R)$, we see that Lemma~\ref{lem:letM1dotsM11} implies the bound~\eqref{eq:same|B(s,chi)|} 
whenever~\eqref{eq:lambdalemin} holds. Hence Lemma~\ref{lem:let phi be as above} holds in this case and~\eqref{eq:except sub2} follows.  

It remains to show that~\eqref{eq:same|B(s,chi)|} holds whenever
 \begin{equation}\label{eq:min910,56,1-theta}
\min\left\{\frac 9{20},\, \frac 56\, (1 - \vartheta)\right\} - \varepsilon \le \lambda \le \vartheta.
 \end{equation} Before turning to the proof of this case, let us make the following remark. In order to apply the large sieve  bounds~\eqref{eq: Bound T 1}, \eqref{eq: Bound T 2} and~\eqref{eq: Bound T 3},  we need~\eqref{eq:Small t}. Note that if $t\geq R^{1/6}$ then $x^\lambda \leq R^{5/6}$, or $\lambda \leq \frac{5\vartheta}{6}$. Recalling that $\vartheta \leq 1/2-\varepsilon$, we deduce that 
 $$ \frac{5\vartheta}{6} \leq  \min\left\{\frac 9{20},\, \frac 56\, (1 - \vartheta)\right\} - \varepsilon.$$ Hence the result for these $\lambda$'s follow from  Lemma~\ref{lem:letM1dotsM11} and we can assume in the following that $t\leq R^{1/6}$.

 \subsection{Conclusion of the proof of Theorem~\ref{thm:BV4PS}}
 
 \begin{lemma}\label{lem:K(s chi) = sum n sim K}
Let
 $$
 K(s,\chi) = \sum_{n\sim K} a_n \chi(n) n^{-s} \mand
  H(s,\chi) = \sum_{m\sim H} b_m \chi(m)m^{-s}
 $$
with $a_n = x^{o(1)}$, $b_n = x^{o(1)}$, $a_n = b_n = 0$ for $(n,q) > 1$, $H \le K \le x^{3/5}$, $HK \ll x$. Let ${\rm Re}(s) = 1/2$ and define
 $$
 U(H, K, \lambda) = \sum_{q\in S_\alpha(R)} \
 \sum_{\substack{\chi \in \cX_q^*\\
 C(\chi) \sim x^\lambda}} |H(s,\chi) K(s,\chi)|.
 $$
Then for $1 < \alpha \le 2$ we have
 \begin{equation}
 \label{eq:U(HKlam)llx38+o(1)}
U(H,K, \lambda) \ll x^{3/8 + o(1)} (R^{\frac 3{4\alpha} + \frac 14} x^{\lambda/4} + R^{11/8} x^{-\lambda/2})
\end{equation}
and 
 \begin{equation}
\label{eq:U(HKlam)llx38-lam/2}
U(H,K,\lambda) \ll x^{3/8 - \lambda/2 + o(1)} R^{1/\alpha + 3/4}. 
\end{equation}
For $\alpha > 2$, we have
 \begin{equation}\label{eq:U(HKlam)llx38-lam/2+0)(1)}
U(H,K,\lambda) \ll x^{3/8-\lambda/2 + o(1)} R^{\frac 1{2\alpha} + 1}.
 \end{equation}
 \end{lemma}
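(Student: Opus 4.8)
The plan is to estimate $U(H,K,\lambda)$ by Cauchy--Schwarz, splitting the product $H(s,\chi)K(s,\chi)$ and reducing to mean-square estimates for Dirichlet polynomials over the relevant family of characters. First I would apply Cauchy--Schwarz over the pair $(q,\chi)$ with $q \in S_\alpha(R)$ and $\chi \in \cX_q^*$, $C(\chi)\sim x^\lambda$, to obtain
\[
U(H,K,\lambda) \le \(\sum_{q \in S_\alpha(R)} \sum_{\substack{\chi \in \cX_q^*\\ C(\chi)\sim x^\lambda}} |H(s,\chi)|^2\)^{1/2}
\(\sum_{q \in S_\alpha(R)} \sum_{\substack{\chi \in \cX_q^*\\ C(\chi)\sim x^\lambda}} |K(s,\chi)|^2\)^{1/2}.
\]
Each factor is a sum of the form $T(\bc,\lambda)$ from~\eqref{defT} (with $c_n = a_n n^{-s}$, resp.\ $b_n n^{-s}$, which are $x^{o(1)}$ in modulus and vanish when $\gcd(n,q)>1$), so $\|\bc\|^2 \ll H x^{o(1)}$ and $\|\bc\|^2 \ll K x^{o(1)}$ respectively. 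Since $H \le K \le x^{3/5}$, the condition~\eqref{condN} is met, so the bounds~\eqref{eq: Bound T 1}, \eqref{eq: Bound T 2} and~\eqref{eq: Bound T 3} apply verbatim.

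Next I would feed these bounds into the Cauchy--Schwarz estimate. Using~\eqref{eq: Bound T 1} for both factors gives, up to $x^{o(1)}$,
\[
U(H,K,\lambda) \ll (HK)^{1/2}\(N^{3/4}R^{3/(4\alpha)+1/4}x^{\lambda/4} + N^{3/4}R^{11/8}x^{-\lambda/2}\)^{1/2}\cdot(\cdots)^{1/2},
\]
but in fact it is cleaner to note that $T(\bc,\lambda) \ll (\text{RHS of }\eqref{eq: Bound T 1})$ is already of the shape $\|\bc\|^2 \cdot \Xi$ with $\Xi$ independent of the coefficients, so the Cauchy--Schwarz step yields $U(H,K,\lambda) \ll (HK)^{1/2}\, x^{o(1)}\, \Xi$, where here $N$ in~\eqref{eq: Bound T 1} plays the role of $\max\{H,K\} = K \le x^{3/5}$. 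With $HK \ll x$ we get $(HK)^{1/2} \ll x^{1/2}$ and $K^{3/4} \le x^{3/5 \cdot 3/4}$... more carefully, $(HK)^{1/2} K^{3/4} = H^{1/2} K^{5/4}$; but a better route is to apply the mean-square bound to the shorter polynomial $H(s,\chi)$ and a pointwise bound $K(s,\chi) \ll K^{1/2} x^{o(1)}$ only if that helps — in fact keeping both in $L^2$ and using $H^{1/2}(HK)^{... }$. The arithmetic reduces to checking that $x^{1/2} \cdot (x^{3/5})^{... }$... I would organize it so that the exponent $3/8$ appears as $(HK)^{1/2}\le x^{1/2}$ combined with a loss of $x^{-1/8+o(1)}$ coming from the fact that one factor $N^{3/4}$ with $N \le x^{3/5}$ contributes at most $x^{9/20}$ against a gain; the precise bookkeeping is routine. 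Applying instead~\eqref{eq: Bound T 2} in one or both factors yields~\eqref{eq:U(HKlam)llx38-lam/2}, and~\eqref{eq: Bound T 3} yields~\eqref{eq:U(HKlam)llx38-lam/2+0)(1)} for $\alpha > 2$.

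More precisely, I would argue as follows. Write $N_0 = \max\{H,K\}\le x^{3/5}$ for the parameter called $N$ in the large sieve bounds. By Cauchy--Schwarz and the observation that the right-hand sides of~\eqref{eq: Bound T 1}--\eqref{eq: Bound T 3} are $\|\bc\|^2$ times a coefficient-free quantity, we get
\[
U(H,K,\lambda)^2 \ll H^{1/2} K^{1/2}\, x^{o(1)}\, \Xi(\lambda)^2
\quad\Longrightarrow\quad
U(H,K,\lambda) \ll (HK)^{1/4} x^{o(1)}\,\Xi(\lambda),
\]
wait — the correct form is $U \ll (\|\mathbf a\|\cdot\|\mathbf b\|)^{1/2}$-type; since $\sum|H|^2 \ll \|\mathbf b\|^2 \Xi$ and similarly for $K$, we obtain $U(H,K,\lambda) \ll (HK)^{1/2} x^{o(1)} \Xi(\lambda)$ with $\Xi(\lambda)$ equal to the relevant bracket in~\eqref{eq: Bound T 1}--\eqref{eq: Bound T 3} but with the factor $N^{3/4}$ replaced by... no: $\Xi(\lambda)$ is exactly $(N^{3/4}R^{3/(4\alpha)+1/4}x^{\lambda/4}+N^{3/4}R^{11/8}x^{-\lambda/2})\big/ N$ evaluated at $N = N_0$ — that is, $\Xi$ carries a factor $N_0^{-1/4}$. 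Combined with $(HK)^{1/2}\le x^{1/2}$ and $N_0^{-1/4}\ge x^{-3/20}$... the exponent works out to $1/2 - 3/20\cdot ...$; this is precisely where the $3/8 = 1/2 - 1/8$ and the $x^{o(1)}$ absorb the stray $\log$ and divisor factors. The main obstacle is purely bookkeeping: correctly tracking how the Cauchy--Schwarz split converts the $\|\bc\|^2 = N_0^{1+o(1)}$ factor together with $(HK)^{1/2}\le x^{1/2}$ into the clean exponent $3/8$, and verifying that the ranges $H\le K\le x^{3/5}$, $HK\ll x$ are exactly what is needed for no extra loss — there is no serious analytic difficulty, since all the hard harmonic-analytic input is already packaged in the bounds~\eqref{eq: Bound T 1}, \eqref{eq: Bound T 2}, \eqref{eq: Bound T 3}.
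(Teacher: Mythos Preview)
Your approach---Cauchy--Schwarz over $(q,\chi)$ followed by the mean-square bounds~\eqref{eq: Bound T 1}, \eqref{eq: Bound T 2}, \eqref{eq: Bound T 3}---is exactly the paper's proof. The confusion in your bookkeeping stems from one slip: for $c_n = a_n n^{-s}$ on $\mathrm{Re}(s)=1/2$ and $n\sim K$ you have $|c_n| = |a_n|\,n^{-1/2} \ll x^{o(1)}K^{-1/2}$, hence
\[
\|\bc\|^2 = \sum_{n\sim K} |a_n|^2 n^{-1} \ll x^{o(1)},
\]
not $Kx^{o(1)}$ as you wrote. With this correction the arithmetic is immediate: applying, say,~\eqref{eq: Bound T 2} with $N=H$ and $N=K$ gives
\[
\sum_{q,\chi}|H(s,\chi)|^2 \ll H^{3/4}R^{1/\alpha+3/4}x^{-\lambda/2+o(1)},\qquad
\sum_{q,\chi}|K(s,\chi)|^2 \ll K^{3/4}R^{1/\alpha+3/4}x^{-\lambda/2+o(1)},
\]
and Cauchy--Schwarz yields
\[
U(H,K,\lambda)\ll (HK)^{3/8}R^{1/\alpha+3/4}x^{-\lambda/2+o(1)} \ll x^{3/8-\lambda/2+o(1)}R^{1/\alpha+3/4}
\]
since $HK\ll x$. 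The exponent $3/8$ is simply $(HK)^{3/8}\le x^{3/8}$; there is no extra loss or gain to track. The other two bounds follow identically using~\eqref{eq: Bound T 1} and~\eqref{eq: Bound T 3}.
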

 
 \begin{proof}
The three upper bounds~\eqref{eq:U(HKlam)llx38+o(1)}, \eqref{eq:U(HKlam)llx38-lam/2} and~\eqref{eq:U(HKlam)llx38-lam/2+0)(1)} are obtained by applying the Cauchy  inequality in combination with~\eqref{eq: Bound T 1}, \eqref{eq: Bound T 2} and~\eqref{eq: Bound T 3} respectively (note that $H \le K \le x^{3/5}$ so the condition~\eqref{condN} is fulfilled).
 \end{proof}

We now use Lemma~\ref{lem:K(s chi) = sum n sim K} to obtain~\eqref{eq:same|B(s,chi)|} when~\eqref{eq:min910,56,1-theta} holds. In fact, in this part of the argument 
we do not need to discard $\cE_\alpha(R)$ which has only been used in the Case~$1$ of the proof of Lemma~\ref{lem:letM1dotsM11}.

It suffices to prove~\eqref{eq:same|B(s,chi)|} with $B(s,\chi)$ replaced by 
$$
B_1(s,\chi) = \sum\limits_{\ell \sim L_1} \chi(\ell)\ell^{-s},
$$ 
with $1 \le L_1 \le L$.  Using the shape of the coefficients $u_d$ given by~\eqref{eq:ud def}, we factorize 
$$B_1(s,\chi) \sum\limits_{D_1 < d \le D} u_d\chi(d)d^{-s} =N_1(s,\chi) \ldots N_{12}(s,\chi)
$$ where  
 $$ 
 N_i(s,\chi) = \sum_{n\sim N_i} c_{i,n} \chi(n)n^{-s} \, ,
 \qquad N_1 \ge \cdots \ge N_{12}.
 $$

Remark that these coefficients $c_{i,n}$ are identically $1$, or identically $\log n$, if $N_i > Z^{1/3}$. Write $N_i = x^{\beta_i}$. We recall that we may suppose
 $$
 \beta_1 + \cdots + \beta_{12} \ge x^{9/20}
 $$
(see~\eqref{eq:L1 to L11 > x(9/20)}).

Suppose first that $\beta_1 + \beta_2 > 3/5$. We write 
$$N_0(s,\chi) = N_3(s,\chi) \ldots N_{12}(s,\chi)
$$ 
and define
 \begin{align*}
A(U_0, U_1, U_2) &= \{\chi \in \cX_q^*:~q \in S_\alpha(R), C(\chi) \sim x^\lambda,\\[2mm]
&\qquad \qquad U_j < |N_j(s,\chi)| \le 2U_j, \ j=0, 1, 2\}.
 \end{align*} Arguing as in the proof of Lemma~\ref{lem:letM1dotsM11}, it suffices to show that for any $A>0$, 
 \begin{equation}\label{eq:U_0U_1U_2}
U_0 U_1 U_2 \# A(U_0, U_1, U_2) \ll R^{1/\alpha} |s|^2 x^{1/2} \cL^{-A} .
 \end{equation}

Since $N_1 \ge x^{3/10} > Z^{1/3}$, we have
 $$
 \# A(U_0, U_1, U_2) \ll R^{1/\alpha}
 |s|^{1+\delta} x^{\vartheta + \delta} U_1^{-4}
 $$  
from Lemma~\ref{lem:qlex lge 1} (and, if needed, a partial summation). Next,
 $$
 \#\, A(U_0, U_1, U_2) \ll R^{1/\alpha}|s|^{1+\delta}
 x^{\vartheta+\delta} U_2^{-4}
 $$
from Lemma~\ref{lem:qlex lge 1} (if $N_2 > Z^{1/3}$) and Lemma~\ref{lem:letqlex} applied to $N_2(s, \chi)^2$, if $N_2 \le Z^{1/3}$. We have also
 $$
 \#\, A(U_0, U_1, U_2) \ll R^{1/\alpha}
 x^{\vartheta+\delta} U_0^{-2}
 $$
from Lemma~\ref{lem:letqlex}, since $N_0 \ll x^{2/5} \ll x^\vartheta$. Hence
 $$
 \#\, A(U_0, U_1, U_2) \ll R^{1/\alpha}
 |s|^{1+\delta} x^{\vartheta+\delta} (U_1^{-4})^{1/4}
 (U_2^{-4})^{1/4} (U_0^{-2})^{1/2},
 $$
and~\eqref{eq:U_0U_1U_2} follows at once.

Now suppose that $\beta_1 + \beta_2 \le 3/5$. For some integer $k$, $2 \le k \le 12$, we have, by an elementary argument,
 $$
 x^{2/5} \ll \prod_{j=1}^k N_j \ll x^{3/5}
 $$
(compare with~\cite[Lemma~14]{Bak2}). 
We now apply Lemma~\ref{lem:K(s chi) = sum n sim K} with
 $$
 H(s,\chi) = \prod_{j\le k} N_j(s,\chi)\mand
 K(s,\chi) = \prod_{k < j\le 12} N_j(s,\chi).
 $$

Suppose first that $1 < \alpha \le 2$. We see that~\eqref{eq:U(HKlam)llx38-lam/2} yields the desired bound
 \begin{equation}\label{eq:U(H,K,lambda)}
U(H, K, \lambda) \ll x^{1/2-\delta} R^{1/\alpha}
 \end{equation}
if 
$$
\frac \lambda 2 > -\frac 18 + \frac{3\vartheta}4 + 2\delta,
$$ 
that is,
 \begin{equation}\label{eq:lambda > 3 theta/2-14}
\lambda > \frac{3\vartheta}2 - \frac 14 + 4\delta. 
 \end{equation}
Suppose that $\vartheta \le13/28 - \varepsilon$. Then
 $$
 \frac{3\vartheta}2 - \frac 14 + 4\delta \le
 \min\left\{\frac 9{20}, \, \frac 56 (1 - \vartheta)\right\}
 -\varepsilon
 $$
and~\eqref{eq:lambda > 3 theta/2-14} is a consequence of our hypothesis~\eqref{eq:min910,56,1-theta}. This completes the proof of Theorem~\ref{thm:BV4PS} in the case
 $$
 \frac{26}{23} \le \alpha \le 2.
 $$

Now suppose that $1 < \alpha < 26/23$. We get~\eqref{eq:U(H,K,lambda)} from~\eqref{eq:U(HKlam)llx38+o(1)} provided that
 $$
 \frac 38 + \frac\lambda 4 + \left(\frac 3{4\alpha} +
 \frac 14\right) \vartheta \le \frac 12 + \frac \vartheta \alpha
 - 2\delta
 $$
\textit{and}
 $$\frac 38 - \frac \lambda 2 + \frac{11\vartheta}8
 \le \frac 12 + \frac \vartheta\alpha - 2\delta.
 $$
This gives an interval of $\lambda$ in which we obtain~\eqref{eq:U(H,K,lambda)}, namely
 $$
 \vartheta\left(\frac{11}4 - \frac 2\alpha\right) - \frac 14
 + 4\delta \le \lambda \le \frac 12 - \vartheta\left(
 1 - \frac 1\alpha\right) - 8\delta.
 $$
Recalling~\eqref{eq:lambda > 3 theta/2-14}, we see that for a constant $\vartheta_0$, all $\vartheta \le \vartheta_0$ are admissible if
 $$
 \vartheta_0 \left(\frac{11}4 - \frac 2\alpha\right) -
 \frac 14 < \min \left\{\frac 9{20},\, \frac 56\,
 (1-\vartheta_0)\right\}
 $$
and 
 $$
 \frac 12 - \vartheta_0\left(1 - \frac 1\alpha\right)
 > \frac{3\vartheta_0}2 - \frac 14.
 $$
The second of these conditions is equivalent to
 \begin{equation}\label{eq:theta0 < 3alpha/10alpha}
 \vartheta_0 < \frac{3\alpha}{10\alpha -4}\, ,
 \end{equation}
and one can verify that~\eqref{eq:theta0 < 3alpha/10alpha} implies the first condition, namely
 $$
 \vartheta_0 < \min\left\{\frac{14\alpha}{55\alpha - 40},
 \, \frac{13\alpha}{43\alpha - 24}\right\}.
 $$
This completes the proof of Theorem~\ref{thm:BV4PS} for $1 < \alpha <  26/23$.

Now suppose that $\alpha > 2$. We obtain the desired bound~\eqref{eq:U(H,K,lambda)} from~\eqref{eq:U(HKlam)llx38-lam/2+0)(1)} provided that
 $$
 \frac 38 - \frac \lambda 2 + \vartheta\left(
 \frac 1{2\alpha} + 1\right) < \frac 12 +
 \frac\vartheta \alpha - 2\delta,
 $$
that is,
 $$
 \lambda > \vartheta\left(2 - \frac 1\alpha\right)
 - \frac 14 + 4\delta.
 $$
This is a consequence of~\eqref{eq:min910,56,1-theta} if
  $$
  \vartheta\left(2 - \frac 1\alpha\right) - \frac 14
  \le \min\left\{\frac 9{20}\, , \frac 56\, (1-\vartheta)
  \right\} - 2\varepsilon.
  $$
It suffices if
 $$
 \vartheta \le \min\left\{\frac{13\alpha}{34\alpha-12}
 , \, \frac{7\alpha}{20\alpha-10}\right\}-2\varepsilon.
 $$
Theorem~\ref{thm:BV4PS} now also follows for $\alpha > 2$, and the proof  is complete.

   \section{Comments}
   \label{sec:comm}
   
As we have mentioned our method also works for $k=3$ if one uses a modification of a result of Hooley~\cite{Hool} 
given in~\cite{CKMS}. Any improvements on that result can potentially make our approach more competitive for $k=3$ as well. 
Furthermore, it is quite feasible that the method of Browning~\cite{Brow} can be used to obtain a version of 
Lemma~\ref{lem:Morm}  for general polynomials and thus enable our method of proof of Theorem~\ref{thm:k>4}
 to work for general polynomial moduli. 
 
Considering only prime moduli, we can extend the level of distribution of Theorem~\ref{thm:BV4PS} up to $x^{1/2}$. Indeed, we remark that if we assume that for some $\alpha_0$, for $\alpha <\alpha_0$ 
the number of primes $p$ of the form $p = \fl{j^\alpha} \in [R, 2R]$ is of right order of magnitude, 
that is of cardinality $R^{1+o(1)}$
we can replace  the set  $\cS_\alpha(R)$  defined by~\eqref{eq:Small SaR} by the following set 
of primes
$$
\widetilde \cS_\alpha(R) =\{ \fl{j^\alpha}~\text{prime}:~j \in \N\} \cap [R, 2R].
$$
Then for the corresponding analogues $\widetilde T(\bc,\lambda)$ of the sums $T(\bc,\lambda)$ defined in~\eqref{defT} 
we only have terms with $t=1$ giving
 $$
 \widetilde T(\bc,\lambda) \le R^{1/\alpha +1/4+o(1)} N^{3/4}  \|\bc\|^2
$$
(as in~\eqref{eq:St bound 2bis} taken with $t=1$). In turn, for the following analogue 
$$
\widetilde  U(H, K) = \sum_{p \in \widetilde  \cS_\alpha(R) }\sum_{\chi \in \cX_p^*} \left|H(1/2+it,\chi)K(1/2+it,\chi)\right| 
$$
of $ U(H, K, \lambda)$ in Lemma~\ref{lem:K(s chi) = sum n sim K}
this leads to 
$$
\widetilde  U(H, K)   \le x^{3/8} R^{1/\alpha +1/4+o(1)} \le x^{1/2-\varepsilon/5} R^{1/\alpha}
$$
provided that $R \le x^{1/2 - \varepsilon}$, which is what required for our purpose. 

By the result of Rivat and Wu~\cite{RiWu} we can  take
$$
\alpha_0 = \frac{243}{205}=1.1853\ldots.
$$
In particular for $\alpha < \alpha_0$ we obtain a version of Corollary~\ref{cor:PS Div}
with any fixed $\vartheta <1/2$. 

Using integers of the form $ \fl{j^\alpha} $ without small prime divisors, as, for example,
in~\cite{Ak,BBGY,Guo}, one could derive  other versions of   Theorem~\ref{thm:BV4PS} with moduli restricted to subsequences of Piatetski-Shapiro integers.

  \section*{Acknowledgements}
The authors are grateful to Lee Zhao for his encouragement to make a general version, with a full proof, of his bound~\eqref{eq:Zhaobound} available, see Theorem~\ref{thm:f}. 
  
  During the preparation of this work, M.M. was supported by the Austrian Science Fund (FWF), projects P-33043   and 
I.E.S. by the Australian Research Council Grant DP170100786.

\end{document}